\documentclass[11pt,reqno,a4paper]{extarticle}

\usepackage[left=3cm,right=3cm,top=2.2cm,bottom=2.2cm]{geometry}
\usepackage[osf]{newtxtext}

\usepackage{amsmath,amssymb,mathrsfs,stmaryrd}
\usepackage{amsthm}
\usepackage{mathtools}
\usepackage{empheq}
\usepackage{enumerate,enumitem}
\usepackage{dsfont}
\usepackage[hidelinks]{hyperref}
\usepackage{esint} 
\usepackage{soul}
\usepackage{xcolor}
\usepackage{titlefoot}
\usepackage{authblk} 

\theoremstyle{plain}
\newtheorem{lemma}{Lemma}[section]
\newtheorem{proposition}{Proposition}[section]
\newtheorem{corollary}{Corollary}[section]
\newtheorem{theorem}{Theorem}[section]
\newtheorem{assumption}{Assumption}[section]
\newtheorem{definition}{Definition}[section]
\newtheorem{notation}{Notation}[section]

\theoremstyle{definition}
\newtheorem{nota}{Notation}[section]

\newtheorem{example}{Example}[section]
\newtheorem{remark}{Remark}[section]

\newcommand{\n}[1]{\ensuremath{\llbracket#1\rrbracket}}
\newcommand{\nn}[2]{\ensuremath{\llbracket#1\rrbracket_{-#2}^{[#2\alpha]}}}
\newcommand{\nnn}[1]{\ensuremath{\llbracket#1\rrbracket_{-1}^{[\alpha ]}}}
\newcommand{\N}{\ensuremath{\mathbb N_0}} 
\newcommand{\R}{\ensuremath{\mathbb R}} 
\newcommand{\braket}[3]{\ensuremath{\prescript{}{#1}{\big\langle }#2\big\rangle_{#3}}}
\newcommand{\BRAKET}[3]{\ensuremath{\prescript{}{#1}{\Big\langle}#2\Big\rangle_{#3}}}

\renewcommand{\d}{\ensuremath{\mathrm d}}
\newcommand{\DD}{\ensuremath{\mathbb{D}}}
\newcommand{\ind}{\mathbf{1}}

\newcommand{\f}{\ensuremath{\mathbf f}}
\newcommand{\loc}{\ensuremath{{\mathrm{loc}}}}
\newcommand{\V}{\ensuremath{\mathcal V}}
\renewcommand{\H}{\ensuremath{\mathcal H}}

\newcommand{\sym}{\ensuremath{\mathrm{sym}}}
\renewcommand{\L}{\ensuremath{{[\mathbf \B]}}}
\newcommand{\ito}{\ensuremath{{\mathrm{It\hat o}}}}
\renewcommand{\qed}{\hfill$\blacksquare$}

\newcommand{\B}{\ensuremath{{\mathbf B}}}
\newcommand{\X}{\ensuremath{{\mathbf X}}}

\newcommand{\w}{\ensuremath{{\mathrm{w}}}}

\newcommand{\XX}{\ensuremath{{\mathbb X}}}
\newcommand{\A}{\ensuremath{{\mathbb A}}}
\newcommand{\YY}{\ensuremath{{\mathbb Y}}}

\newcommand{\LL}{\ensuremath{{\mathbb L}}}
\newcommand{\Z}{\ensuremath{{\mathbf Z}}}

\usepackage{graphicx}
\newcommand{\bbGamma}{{\mathpalette\makebbGamma\relax}}
\newcommand{\makebbGamma}[2]{%
	\raisebox{\depth}{\scalebox{1}[-1]{$\mathsurround=0pt#1\mathbb{L}$}}%
}

\DeclareMathOperator*{\esssup}{ess\,sup}
\DeclareMathOperator*{\essinf}{ess\,inf}

\DeclareMathOperator{\id}{id}


\makeatletter

\@addtoreset{equation}{section}
\makeatother

\begin{document}
\title{An It\^o Formula for rough partial differential equations and some applications}
\date{}

\author[1]{Antoine Hocquet}
\author[2]{Torstein Nilssen}

\affil[1]{\small Institut f\"ur Mathematik,  Technische Universit\"at Berlin, Stra\ss e des 17. Juni 136, D-10623 Berlin, Germany}
\affil[2]{\small Mathematical Institute, University of Agder, Universitetsveien 25, 4630 Kristiansand, Norway}

\maketitle

\unmarkedfntext{\textit{Mathematics Subject Classification (2020) ---} 60L50 $\,$\textbullet$\,$ 60H15 $\,$\textbullet$\,$ 35A15 $\,$\textbullet$\,$ 35B50 $\,$\textbullet$\,$ 35D30}

\unmarkedfntext{\textit{Keywords and phrases ---} 
	Rough paths $\,$\textbullet$\,$ Rough PDEs $\,$\textbullet$\,$ Energy method $\,$\textbullet$\,$ Weak solutions $\,$\textbullet$\,$ It\^o formula $\,$\textbullet$\,$ Maximum principle $\,$\textbullet$\,$ Renormalized solutions}

\unmarkedfntext{\textit{Mail ---} antoine.hocquet@wanadoo.fr $\,$\textbullet$\,$ torstein.nilssen@uia.no}

\begin{abstract}
	We investigate existence, uniqueness and regularity for solutions of rough parabolic equations of the form $\partial _tu-A_tu-f=(\dot X_t(x) \cdot \nabla  + \dot Y_t(x))u$ on $[0,T]\times\mathbb{R}^d.$ To do so, we introduce a concept of ``differential rough driver'', which comes with a counterpart of the usual controlled paths spaces in rough paths theory, built on the Sobolev spaces $W^{k,p}.$ We also define a natural notion of geometricity in this context, and show how it relates to a product formula for controlled paths. In the case of transport noise (i.e.\ when $Y=0$), we use this framework to prove an It\^o Formula (in the sense of a chain rule) for Nemytskii operations of the form $u\mapsto F(u),$ where $F$ is $C^2$ and vanishes at the origin. Our method is based on energy estimates, and a generalization of the Moser Iteration argument to prove boundedness of a dense class of solutions of parabolic problems as above. In particular, we avoid the use of flow transformations and work directly at the level of the original equation. We also show the corresponding chain rule for $F(u)=|u|^p$ with $p\geq 2,$ but also when $Y\neq 0$ and $p\geq 4.$ As an application of these results, we prove existence and uniqueness of a suitable class of $L^p$-solutions of parabolic equations with multiplicative noise. Another related development is the homogeneous Dirichlet boundary problem on a smooth domain, for which a weak maximum principle is shown under appropriate assumptions on the coefficients.
\end{abstract}

\tableofcontents

\section{Introduction}
\label{sec:intro}

\textbf{Motivations.}
Consider a stochastic partial differential equation with multiplicative noise of the form
\begin{equation}
\label{typical}
\d u_t -\Delta u_t\d t= \partial _iu_t\d X^i_t(x) + u_t\d X^0_t(x)\,,
\quad \text{on}\quad (0,T]\times\R^d
\end{equation}
where $\partial _i=\frac{\partial }{\partial x_i},$ $T\in(0,\infty)$ denotes a fixed time horizon, $(X^i)_{i=0,\dots,d}$ denotes some $Q$-Wiener process (sufficiently smooth in $x$), and throughout the paper we use Einstein's summation convention over repeated indices.
For now the product with the above differentials is subject to different possible meanings (for instance Stratonovitch or It\^o).

Equations such as \eqref{typical} arise in a number of different stochastic models. To name a few, this includes filtering theory \cite{gyongy2006zakai}, McKean-Vlasov equations \cite{kotelenez2010class}, or pathwise stochastic control problems (see for instance \cite[Example 2]{caruana2011rough} and references therein).
In the more general context of a degenerate left hand side, this type of noise appears in stochastic transport equations (with $X^0=0$), where a regularization by noise phenomenon is observed \cite{flandoli2010well,nilssen2015rough,mohammed2015sobolev,catellier2016rough}, or in stochastic conservation laws, see \cite{guess2016regularization} for an overview.
We also mention the works \cite{delarue2016rough,cannizzaro2018multidimensional} where the authors solve an equation similar to \eqref{typical}, with the difference that they consider a vector field $X^i_t(x)$ which is rough with respect to the space-like variable.

The way \eqref{typical} is usually dealt with is by definition of an appropriate functional setting, in which standard It\^o calculus tools can be used. We refer for instance to the classical works of Pardoux, Krylov and Rozovskii \cite{pardoux1980stochastic,krylov1981stochastic}. Although these approaches are quite sucessful, it is well-known that the solution map $X\mapsto u$ is not continuous in general. This constitutes an important motivation for introducing a \emph{rough paths formulation} of \eqref{typical} (in particular because the examples given above display a need for stability results, see \cite{friz2014rough}).
Rough parabolic differential equations such as \eqref{typical} have been investigated in
\cite{caruana2009partial,friz2011splitting,caruana2011rough,friz2014rough} where a viscosity formulation is proposed, based on ideas of Lions and Souganidis \cite{lions1998fully,lions1998fully2}.
Despite their success, these papers appeal to an extensive use of flow transformation techniques, which has some conceptual disadvantages. In particular, they have to make the assumption that the solutions are obtained as limits of approximations. 
To the best of our knowledge, the Feynmann-Kac representation technique used in \cite{diehl2017stochastic}, constitutes the first attempt to deal with \eqref{typical} directly (there is also the semigroup approach of Gubinelli, Deya and Tindel \cite{gubinelli2010rough,deya2012nonlinear}, but their results do not seem to cover the case of a gradient noise as above).

One of our main purposes in this paper is to pursue the variational approach initiated by Deya, Gubinelli, Hofmanov\'a and Tindel in \cite{deya2016priori}, by defining, among other things, a suitable functional setting for generalized versions of \eqref{typical}. In this sense, we will particularly emphasize the topological aspects associated with \eqref{typical}, for instance by introducing the controlled paths spaces $\mathcal D^{\alpha ,p}_B,$ as well as their parabolic counterpart $\H^{\alpha ,p}_B$ (see sections \ref{sec:controlled_paths} and \ref{sec:space}).
Working with classical PDE techniques such as energy estimates and maximum principles, our contribution can be seen as an attempt to extend Krylov's analytic approach \cite{krylov1999analytic} to the RPDE context. One of the key concepts we will use here is that of an unbounded rough driver, as introduced by Bailleul and Gubinelli in \cite{bailleul2017unbounded}. More specifically, we will introduce a notion of \emph{differential} rough driver, which is a particular case of the former (see Definition \ref{def:rough_driver}). 
We will also provide a natural, intrinsic notion of geometricity for differential rough drivers. As shown in Lemma \ref{lem:multiplication}, geometric differential rough drivers display remarkable algebraic properties.
In particular, they are simultaneously symmetric, closed and renormalizable in the sense of \cite[definitions 5.3, 5.4 \& 5.7]{bailleul2017unbounded}. In contrast with the previous works \cite{deya2016priori,hocquet2017energy,hofmanova2018navier}, we will be able to consider these objects ``as such'', in the sense that we will not refer to any (geometric) finite-dimensional rough path. This observation, which can be seen as one of our main contributions, allows us to gain generality in the statements and, hopefully, to improve the clarity of the presentation.
\\

\textbf{The importance of geometricity and its relation to stochastic parabolicity.}
In contrast with the recent developments on rough parabolic equations \cite{gubinelli2010rough,hairer2013solving,hairer2013rough,hairer2014theory,gubinelli2015paracontrolled,otto2016quasilinear,bailleul2016higher} (for results related to It\^o Formula in this case, see \cite{zambotti2006ito,bellingeri2018ito}), the noise term in \eqref{typical} is not singular with respect to the space-variable, so that in appearance \eqref{typical} does not fall into the category of ``singular PDEs''. However, difficulties arise from the fact that for all times $t$ the operation $u\mapsto X_t \cdot \nabla u$ is unbounded. A side effect of this property is that the low time-regularity of solutions implies in turn low \emph{space}-regularity, as can be seen by the scaling properties of the equation.
In the case of $X=W$ being a Brownian motion and $X^0=0$, it is easily seen that for $\varepsilon>0$ the transform $(t,x)\to (\varepsilon ^2t,\varepsilon x)$ leaves the equation invariant (using the scaling properties of $W$). Leaving aside mathematical rigor,
this type of invariance indicates that \eqref{typical} cannot be considered as a perturbation of a heat equation at small scales. In this sense, the equation \eqref{typical} is not really parabolic and the use of semigroups and variation of constants formulae is inoperative (we nevertheless refer to the recent works \cite{gerasimovics2018hormander,gerasimovics2019non} in a similar but ``subcritical'' context). The situation can go even worse if $X=W^H$ is a fractional Brownian motion with hurst index $1/3<H<1/2,$ a case that is covered by our results.
In this case, the transport term $\partial _t - \dot W^H_t\cdot \nabla $ dominates, even though the drift term has two spatial derivatives. This might be a loose explanation why some of the arguments below seem to have a transport flavour (the bounds \eqref{bounds:renormalization} which are needed in the tensorization argument of Section \ref{sec:space} can be understood as a ``commutator lemma'' \`a la Di Perna Lions \cite{diperna1989ordinary}; see Appendix \ref{app:renorm}).
As a matter of fact, the fractional Brownian case enters the category of ``supercritical'' equations in the sense of \cite[Section 8]{hairer2014theory}, and this is so regardless of the space dimension $d$.

In this context, the assumption that $X$ is geometric turns out to be essential.
To illustrate why, let us go back to the standard Brownian motion case, more precisely let $d=1,$ consider $X_t=bW_t$, $b\in\R$ being a constant, and for simplicity take $X^0=0$. Assume for a moment that \eqref{typical} is understood in the sense of It\^o, so that the corresponding rough path formulation would violate geometricity. Computing formally the It\^o Formula for the square of the $L^2$-norm of the solution, one sees that the correction term is given by $\int_{\R^d}b ^2(\partial _{x}u)^2,$ which dangerously competes with the conservative term $-2\int_{\R^d}(\partial _xu)^2$ brought by the Laplacian. In particular, the usual technique to obtain an a priori estimate for $u$ fails unless $1/2b ^2< 1,$ which is a condition known as \emph{strong parabolicity}. This assumption is in fact necessary to ensure well-posedness as can be seen by taking the spatial Fourier transform in the equation (we refer the reader to \cite[Section III.3]{krylov1981stochastic}).
If on the other hand \eqref{typical} is understood in the Stratonovitch sense, the latter problem disappears, and this is to be related to the fact that a Stratonovitch equation satisfies a ``standard'' chain rule of the form 
\begin{equation}
\label{abstract_chain_rule}
\d (F(u))=F'(u)\circ \d u
\end{equation}
(meaning in particular that no correction term of the previous form appears). Besides introducing a new functional framework for \eqref{typical}, our main objective in this paper is to investigate the chain rule \eqref{abstract_chain_rule}, which will be systematically addressed in the transport-noise case, assuming ``geometricity of the driving noise'' (understood at the level of the differential rough driver, see Definition \ref{def:geometric}). In the stochastic setting, the geometricity assumption essentially means that the iterated integrals which define the second level $\LL_t$ of $\X_t$ should be understood in the Stratonovitch sense. Nevertheless, we point out that \eqref{typical} can always be translated in terms of an equivalent Stratonovitch equation. If strong parabolicity is assumed, it is straightforward to check that the corrected equation has still the parabolic form \eqref{rough_paths_eq}, and hence our main results still apply in this practical case.
\\

\textbf{Settings and summary of the results.}
In this paper, we interpret \eqref{typical} as the rough equation
\begin{equation}
\label{rough_paths_eq}
\left\{
\begin{aligned}
&\d u_t-(A_tu +f_t(x))\d t= \d \B _t u_t\enskip,
\quad \text{on}\enskip (0,T]\times \R^d
\\
&u_0\enskip \text{given in}\enskip L^p(\R^d)\,,
\end{aligned}\right.
\end{equation}
where the unknown $u_t(x)$ is seen as a path with values in the Lebesgue space $L^p(\R^d),$ for some $p\in[1,\infty].$
Here 
\[\B=(B^1,B^2)\]
denotes some kind of two-step ``enhancement'' of the time-dependent family of differential operators
\begin{equation}
\label{dB}
B _t=B^1_{0t}:= X _t^i(x)\partial _i + X^0_t(x)\,,\quad t\in[0,T],
\end{equation}
for $(X _t^i(x))_{0\leq i\leq d}$ sufficiently regular in space.
From the point of view of the coefficient path, it will be seen that $(t\mapsto X_t(x))$ must be accompanied with an additional object 
\[\LL^i_{st}(x),\quad i=0,\dots d,\quad 0\leq s\leq t\leq T,\quad x\in\R^d\,,
\]
akin to the usual \emph{L\'evy area} for two-step geometric rough paths with real-valued coordinates. The knowledge of $\LL^i$ is necessary (and sufficient) to give a proper meaning for \eqref{rough_paths_eq}. As will be seen in the manuscript, it is heuristically filling the gaps in order to make sense of the (a priori ill-defined) iterated integral 
\begin{equation}
\label{fill_gaps}
\begin{aligned}
B^2_{st}
&\phantom{:}=
\iint_{s<r_1<r_2<t} \d B _{r_2} \circ \d B _{r_1}
\\
&:= \frac12X^i_{st}X^j_{st}\partial _{ij} + (\LL^i_{st} + X^i_{st}X^0_{st})\partial _i+ \LL^0_{st} +  \frac12(X^0_{st})^2\,,
\\
&\quad \quad \quad \quad \quad \quad 
\quad \quad \text{for}\quad  0\leq s\leq t\leq T\,\quad \text{and}\quad x\in \R^d\,,
\end{aligned}
\end{equation} 
where '$\circ$' denotes the composition of linear operators. In particular, there is a one-to-one correspondence between $\B$ and the enhancement $(X,\LL)$ of its coefficient path.
Throughout the paper, the pair $\X=(X,\LL)$ is therefore considered as part of the data, and so is $\B$ through \eqref{fill_gaps}.
For simplicity, the path $X$ will be assumed to have bounded $q$-variation with $q=\frac1\alpha $ (including the $\alpha $-H\"older case), for some $\alpha >1/3$.
It will be sometimes more convenient to rewrite equation \eqref{rough_paths_eq} under the following form
\begin{equation*}
\left\{ 
\begin{aligned}
&\d u - (A_tu+f_t)\d t = (\d \X \cdot \nabla  + \d \X^0)u_t\quad \text{on}\enskip (0,T]\times\R^d
\\
&u_0\in L^p\,,
\end{aligned}\right.
\end{equation*}
which has the advantage of being more explicit.

In keeping with Gubinelli's approach \cite{gubinelli2004controlling}, the integration map which is implicitly associated with the right hand side of \eqref{rough_paths_eq}, only makes sense on a set of paths $u\colon[0,T]\to L^p$ that are \emph{controlled by $B$}, a notion that will be introduced in Section \ref{subsec:notion}.
Concerning the left hand side of \eqref{rough_PDE_gene}, we will assume throughout the paper that $A_t$ is a time-dependent family of elliptic operators on divergence form
\begin{equation}\label{nota:A}
A_tu(x)=\partial _{i} (a^{ij}(t,x)\partial _{j} u(x)),
\end{equation}
whith coefficients $a^{ij}$ being possibly discontinuous but bounded above and below (see assumption \ref{ass:A}). Correspondingly, the free term $f$ will be an element of the Sobolev space $L^2(0,T;H^{-1}).$
Our first main achievement is to prove well-posedness for \eqref{rough_paths_eq}, for a class of controlled paths $u\colon[0,T]\to L^2(\R^d)$ having finite energy
\[
\sup\limits_{t\in[0,T]} |u_t|_{L^2}^2 + \int_0^T|\nabla u_t|_{L^2}^2\d t <\infty\,,
\]
in the case where $\B$ is geometric. This will be stated in Theorem \ref{thm:free_intro}, completing the results of \cite{hocquet2017energy}.

Next, we will address the problem of writing an It\^o formula for solutions of \eqref{rough_paths_eq}, where in addition of geometricity, we will assume that $\B$ is ``transport-like'', that is:
\begin{equation}
\label{rho_zero}
X^0 =0\quad \text{in \eqref{dB}.}
\end{equation}
The problem of writing a chain rule for \eqref{rough_paths_eq} arises in a very natural way when studying the well-posedness of \eqref{typical}, as illustrated by the previous paragraph and the search for an energy estimate (this corresponds to the choice $F(z)=z^2$ in \eqref{abstract_chain_rule}).
The justification of the chain rule is also useful to establish comparison principles, where the corresponding choice of function would be for instance $F(z)=z^{\pm},$ or a suitable regularized version thereof.
Under the assumption \eqref{rho_zero}, we will prove that a chain rule like \eqref{abstract_chain_rule} holds for any $F\in C^2(\R,\R)$ with $F(0)=F'(0)=0$ and $|F''|_{L^{\infty}}<\infty.$ Concretely, we will see that 
\begin{equation}
\label{ito_intro}
\d (F(u))-F'(u)(A_tu+f)\d t= \d\X\cdot \nabla (F(u))
\end{equation}
(see Theorem \ref{thm:ito_transport} for a precise statement).
The formula \eqref{ito_intro} will be applied in particular to obtain a weak maximum principle for an appropriate subclass of problems of the form \eqref{rough_paths_eq}, as will be stated in Theorem \ref{thm:max_principle}.
We insist on the fact that, because of the lack of space-regularity of solutions, \eqref{ito_intro} is not a trivial statement.
In particular, the solution $u$ fails in general to satisfy the hypotheses of \cite[Proposition 7.6]{friz2014course},
see Remark \ref{rem:not_trivial}.
Note that in some sense, \eqref{ito_intro} can be seen as a parabolic analogue to the renormalization property for transport equations in Sobolev spaces \cite{diperna1989ordinary,ambrosio2004transport,de2007ordinary}. Roughly speaking, renormalized solutions could be defined as elements $u$ of the controlled path space so that \eqref{ito_intro} holds for any $F$ as above; hence \eqref{ito_intro} shows that solutions of finite energy are renormalized.
On the other hand, if $u$ is renormalized, taking $F=(\cdot )^2$ will show that $u$ is itself an $L^2$-solution,
and hence \eqref{ito_intro} can be understood as the statement that the two notions are equivalent.

Regarding applications, the chain rule for the $L^p$-norm of solutions $u\colon[0,T]\to L^p$ (that is \eqref{ito_intro} with $F(z)=|z|^p$) is of particular interest for SPDE purposes. In the stochastic setting, this echoes the works of Krylov and Kim for stochastic equations in $L^p$ spaces \cite{kim2014theory,krylov2011ito,krylov2013relatively}, where the corresponding It\^o Formula is an essential tool.
In this paper, we will investigate the  analogue for rough paths, that is for every $L^p$-solution $u$ of \eqref{rough_paths_eq}, and under some mild assumptions on $f$ and $u_0$, we will see that
\begin{equation}
\label{ito_Lp_intro}
\d |u|^p-pu|u|^{p-2}(A_tu+f)\d t= (\d\X\cdot \nabla  + p\X^0)|u|^p\,
\end{equation}
as long as $p\geq 4$
(this can be relaxed to $p\geq 2$ when $X^0=0$).
We note that, since $F''$ is not bounded, \eqref{ito_Lp_intro} is not a simple consequence of \eqref{ito_intro}, even when the multiplicative part is zero.
Nevertheless, using rough paths stability results that come for free with our formulation, it will be seen that \eqref{ito_Lp_intro} admits a relatively simple proof. In our way to prove this formula, we shall also address existence and uniqueness for a suitable class of $L^p$-solutions of parabolic equations with multiplicative noise.

Due to the relative length of this paper, and since this drastically complicates the algebra,
we chose to postpone the treatment of a more general It\^o Formula (taking for instance $X^0\neq 0$ in \eqref{dB}, or even a non-geometric $\B$) in a future work. Similarly, we could have considered an additional rough input of additive form.
More general operators $A$ (for instance adding a perturbation $b^i(t,x)\partial _i u + c(t,x)u$ with integrability conditions on $b,c,$ see \cite{hocquet2017energy}) and more general boundary problems, could also be investigated following the same ideas, but for the sake of simplicity we restrain from doing so.
\\

\textbf{Organization of the paper}
Our main results concerning existence, uniqueness, stability and the chain rule for \eqref{rough_paths_eq}, will be given in Section \ref{sec:main}, where we also introduce notations and definitions. In particular, we introduce an intrinsic formulation of \eqref{rough_paths_eq}, in the spirit of \cite{deya2016priori}.
We will complete our results by a criterion for boundedness of solutions, a chain rule for the $L^p$-norm of solutions, and a weak maximum principle for the Dirichlet problem on a bounded domain.
In Section \ref{sec:controlled_paths} we state some facts that will be used throughout the paper, such as the Sewing Lemma or the so-called ``Rough Gronwall'' argument (as stated in \cite{deya2016priori}). The main novelty of this section is that we introduce a notion of controlled path space $\mathcal D^{\alpha ,p}_B,$ with respect to a differential rough driver $\B$. We then state and re-prove the so-called ``remainder estimates'' as given by Deya, Gubinelli, Hofmanov\'a and Tindel in \cite[Theorem 2.5]{deya2016priori}. We provide an alternative formulation of this result, which has the conceptual advantage of being understood as an a priori estimate in $\mathcal D_B^{\alpha ,p}$ (as in the usual finite-dimensional controlled path picture).
In Section \ref{sec:space}, we define a suitable functional setting for rough parabolic equations by introducing the parabolic spaces $\H^{\alpha ,p}_B$. We will then state one of the core arguments of this paper, which is the ``product formula'' (Proposition \ref{pro:product}). By reiteration of the product, we will obtain the chain rule on monomials of any bounded solution, and on polynomials by linearity.

In Section \ref{sec:free}, we use this result to solve a class of rough, non-degenerate parabolic equation with free terms in the space $L^2(H^{-1}).$ This is done via energy estimates, and the use of the Rough Gronwall Lemma.
In Section \ref{sec:boundedness} we show, using a Moser Iteration, that a ``relatively large'' class of solutions to rough parabolic problems of the form \eqref{rough_paths_eq} is made of elements which are locally bounded. This observation, together with the fact that a chain rule holds for polynomials of a bounded solution, will then allow us to prove the claimed It\^o formula in Section \ref{sec:proof:ito}.
The corresponding proof for the $L^p$-norm, as well as the solvability for an appropriate class of $L^p$-solutions, will be dealt with at the end of Section \ref{sec:proof:ito}. It is based on a different argument using approximation and stability results for rough partial differential equations.

Section \ref{sec:max} is devoted to the proof of Theorem \ref{thm:max_principle}. After proving the solvability of the homogeneous Dirichlet problem on a smooth, bounded domain, we show, using our It\^o Formula, that the solutions satisfy a weak maximum principle.

In Appendix \ref{sec:appendix}, we shall give the proof of some technical facts verified by any geometric differential rough driver, generalizing \cite[Section 3.2]{deya2016priori}. Finally, Appendix \ref{app:further} will be devoted to a quick discussion on the uniqueness of the Gubinelli derivative, and on the ``non-commutative brackets'' $\L_{st}=B^2_{st}-\frac12B^1_{st}\circ B^1_{st}.$

\section{Preliminaries and main results}
\label{sec:main}
\subsection{Notation}
Throughout the paper, the notation $K\subset\subset \R^d$ stands for ``$K$ is a compact set in $\R^d$''.
The symbol $T>0$ refers to a finite, fixed time-horizon.

By $\mathbb N,$ we denote the set of natural integers $1,2,\dots,$ and we let $\N:=\mathbb N\cup \{0\},$ while $\mathbb Z:=\N \cup(-\mathbb N).$ Real numbers are denoted by $\R,$ and we let moreover $\R_+:=[0,\infty).$

Given Banach spaces $X,Y,$
we will denote by $\mathscr L(X,Y)$ the space of linear, continuous maps from $X$ to $Y,$ endowed with the operator norm.
For $f$ in $X^*:=\mathscr L(X,\R),$ we denote the dual pairing by
\[
\braket{X^*}{ f,g }{X}
\]
(i.e.\ the evaluation of $f$ at $g\in X$).
When they are clear from the context, we will simply omit the underlying spaces and write $\langle f,g\rangle$ instead.
\\

\textbf{Sobolev spaces and scales.}
For an open smooth domain $U\subset\R^d,$ we will consider the usual Lebesgue and Sobolev spaces in the space-like variable: $L^p(U)$, $W^{k,p}(U),$ for $(k,p)\in\mathbb Z\times(1,\infty]$ or $p=1$ and $k\in \N,$ and we distinguish the case $p=2$ by writing $H^k(U):=W^{2,k}(U)$; the corresponding norms will be simply denoted by $|\cdot |_{L^p(U)},{\nolinebreak|\cdot |_{W^{k,p}(U)}}, |\cdot |_{H^k(U)}.$ With the exception of Section \ref{subsec:controlled}, the notations $L^p,$ $W^{k,p}$ and $H^k$ refer to the whole space scenario $U=\R^d.$
These spaces have local (resp.\ weak) analogues $L^p_\loc,W^{k,p}_\loc,H^k_\loc$ (resp.\ $L^p_\w,W^{k,p}_\w,H^k_\w$) which are defined as usual.
When $k$ is negative, we adopt the convention that $W^{k,1}$ is the range of the linear operator
\[ (\f^\gamma )\in L^p\big(\R^d;\R^{\sum_{|\gamma |\leq -k}|\gamma |}\big)\mapsto (\partial _\gamma  \f^\gamma )_{|\gamma |\leq -k}\]
where $|\gamma |:=\gamma _1+\dots+\gamma _d,$ and the derivatives are understood in distributional sense.
Correspondingly, the norm of $f\in W^{k,1}$ is defined as the infimum of the $L^1$-norms of any possible antiderivative $\f^{\gamma }$ of $f$ .
Note that with this convention, $W^{k,1}$ identifies only with a proper subspace of the dual $(W^{|k|,\infty}_0)^*,$ however this is coherent with the case $p>1$ (see for instance \cite{brezis2010functional}).
If $U\subset\R^d$ is a domain whose boundary is smooth and if $p\in[1,\infty],$ we define the spaces $W_0^{k,p}$ as
\[
W_0^{k,p} ( U):=\left\{f\in W^{k,p}\enskip \text{s.t.\ } (\nu\cdot \nabla )^jf=0\enskip \text{for}\enskip j\in\N,\enskip j< k-1/p \right\}.
\]
where $\nu $ denotes the outward unit vector associated to $\partial  U.$

In the sequel, we call a \emph{scale} any graded family of topological vector spaces of the form $(E_k,|\cdot |_{k})_{k\in I}$ with $I\subset \mathbb Z$ such that $E_{k}$ is continuously embedded into $E_{k-1},$ for each $k\in I.$ Note that, in the paper the set $I:=\{-3,-2,-1,0,1,2,3\}$ will be sufficient for our purposes.
\\

For $0\leq s\leq t\leq T$ and $f=f_r(x)$ we use the notation 
\[
\|f\|_{L^r(s,t;L^q)}:=\left(\int_{s}^t\left(\int_ {\R^d} |f_\tau (x)|^q\d x\right)^{r/q}\d \tau \right)^{1/r},
\]
and for simplicity we will sometimes write
$\|f\|_{L^r(L^q)}$
as a shorthand for $\|f\|_{L^r(0,T;L^q)}.$
Furthermore, the space of continuous functions with values in a Fr\'echet space $E$ will be denoted by $C(0,T;E).$ 
It is itself a Fr\'echet space, equipped with the family of semi-norms $\|f\|_{C(0,T;E),\gamma }:=\sup_{r\in I}\gamma (f_r),$ for any semi-norm $\gamma$ of $E.$
\\

\textbf{Controls and $p$-variation spaces.}
We will denote by $\Delta ,\Delta _2$ the simplices
\begin{equation}\label{nota:simplexes}
\begin{aligned}
&\Delta:=\{(s,t)\in [0,T]^2\,,\,s\leq t\}\,,
\\
&\Delta _2:= \{(s,\theta ,t)\in [0,T]^3\,,\,s\leq \theta \leq t\}\,.
\end{aligned}
\end{equation}
If $E$ is a vector space and $g\colon[0,T]\to E,$ we define a two-parameter element $\delta g$ as
\[
\delta g_{st}:= g_t - g_s,\quad  \text{for}\enskip  (s,t)\in\Delta .
\]
Similarly, we define another operation $\tilde\delta $  by letting,
for any $g:\Delta \to E,$   $\tilde \delta g$ be the quantity
\[
\tilde \delta g_{s\theta t}:=g_{st}-g_{s\theta }-g_{\theta t},\quad \text{for}\enskip (s,\theta ,t)\in\Delta _2,
\]
and we recall that $\mathrm{Ker}\tilde \delta=\mathrm{Im}\delta .$
As usual in the framework of controlled paths, we will omit the symbol $\tilde {\phantom{e}}$ on the second operation,
and write $\delta $ instead of $\tilde \delta .$

We call \emph{control} on $[0,T]$ any continuous, superadditive map $\omega \colon \Delta \to \R_+,$ 
namely $\omega $ is such that for all $(s,\theta ,t)\in \Delta _2$
\begin{equation}
\label{axiom:control}
\omega (s,\theta )+\omega (\theta ,t)\leq \omega (s,t)
\end{equation}
(this implies in particular that $\omega (t,t)=0$ for any $t\in[0,T]$).

If $E$ is equipped with a family of semi-norms, and $\alpha >0,$
we denote by 
$\V_ 1^{\alpha }(0,T;E)$
the set of continuous paths $g\colon[0,T]\to E$, such that for each semi-norm $\gamma$, there exist a control $\omega_\gamma :\Delta \to\R_+$ with
\begin{equation}
\label{def:omega_a}
\gamma \big(\delta g_{st}\big)\leq \omega_\gamma (s,t)^{\alpha  }\,,
\end{equation} 
for every $(s,t)\in\Delta .$
Similarly, we denote by $\V_ 2^{\alpha }(0,T;E)$ the set of $2$-index maps $g:\Delta \to E$ such that $g_{tt}=0$ for every $t\in [0,T]$ and 
\begin{equation}
\label{def:omega_a_2}
\gamma \big(g_{st}\big) \leq \omega_\gamma  (s,t)^{\alpha }\,,
\end{equation} 
for all $(s,t) \in \Delta ,$ and some family of controls $\omega_\gamma.$
If $E$ is a Banach space and $\gamma =|\cdot |_{E}$, one defines a norm $\n{\cdot }_{\V_2^\alpha }$ on $\V^\alpha _2(0,T;E)$ by taking the infimum of $\omega (0,T)^\alpha $ over every possible control $\omega $ such that \eqref{def:omega_a_2} holds. This quantity is in fact equal to the usual $q$-variation norm where $q:=\frac1\alpha $, as seen for instance in \cite[Lemma 3.2]{hocquet2017energy}.

By $ \V^{\alpha }_{2,\text{loc}}(0,T;E)$ we denote the space of maps $g:\Delta\to E$ such that there exists a countable covering $\{I_k\}_k$ of $I$ satisfying $g \in  \V^{\alpha }_2(I_k;E)$ for any $k$.
We also define the set $\V^{1+}_2(0,T;E)$ of ``negligible remainders'' as
\[
\V^{1+}_2(0,T;E):=\bigcup_{\alpha >1} \V_ 2^{\alpha }(0,T;E),
\]
and similarly for $\V^{1+}_{2,\text{loc}}(0,T;E).$

\subsection{Rough drivers}
\label{subsec:DRD}

Before giving definitions, let us quickly explain our approach.
For simplicity, let $A=0$, assume that $f$ is smooth and consider a family $ B_t :=(X_t(x)\cdot \nabla + X_t^0(x))$ of first-order differential operators, where for each $i=0\dots d,$ $X_t^i(x)$ is smooth with respect to $x$ (for fixed $t$), and $\alpha $-H\"older in $t$ for each $x$, while $\alpha >1/3.$
Integrating formally \eqref{rough_paths_eq} in time, we have
\begin{align}
\nonumber
u_t-u_s-\int_s^tf_r\d r 
&=( B_t- B_s) u_s + \int_s^t\d  B_r(u_r-u_s)
\\
\nonumber
&= \delta B_{st}u_s + \iint_{s<r_1<r<t}\d  B_r [\d  B_{r_1}u_{r_1} + \d{r_1}f_{r_1} ]\,.
\\
\nonumber
&= B^1_{st}u_s + \left(\iint_{s<r_1<r<t}\d  B_r \circ \d  B_{r_1}\right)u_s
\\
\nonumber
&\quad \quad + \left(\iint_{s<r_1<r<t}\d  B_r \circ \d  B_{r_1}\right)[u_{r_1}-u_s] + o(t-s)\,.
\end{align}
One expects any ``reasonable'' solution to satisfy an estimate of the form $|u_t-u_s|_{W^{-1,p}}\lesssim (t-s)^\alpha $, so that in particular
\[
\Big|\Big(\iint_{s<r_1<r<t}\d  B_r\circ\d  B_{r_1}\Big)(u_{r_1}-u_s)\Big|_{W^{-3,p}} \lesssim (t-s)^{3\alpha } =o(t-s)\,.
\]
Combined with the above, we thus find the Euler-Taylor type expansion
\begin{equation}
\label{abstract_edo}
u^{\natural}_{st}:=\delta u_{st}-\int_s^tf_r\d r - \left(B^{1}_{st} + B^2_{st}\right)u_s\enskip \in \enskip o(t-s),
\end{equation} 
where we introduce the two-index map $\B=(B^1,B^2)$ defined as
\begin{equation}
\label{intro:increment}
\left\{
\begin{aligned}
B^1_{st}
&:= B_t- B_s ,\quad 
\\
B^2_{st}
&:=\iint_{s<r_1<r<t}\d  B_r\circ \d  B_{r_1}\,,
\end{aligned}\right.
\quad \enskip 0\leq s\leq t\leq T\,.
\end{equation}
When $\alpha > 1/2,$ the operators $B^2_{st}$ are canonically defined via an immediate non-commutative generalization of Young Theorem \cite{young1936inequality}. This is in contrast with the case $\alpha \leq 1/2,$ where \eqref{intro:increment} does not make sense in general. Indeed, while the definition of $B^1_{st}$ seems not problematic for $ B$ continuous (just let $B_{st}^1:=\delta  B_{st})$, this is not the case of the second component in general. If $ B(n)\to  B$ uniformly on $[0,T],$ a limit point of $\{\iint_{s<r_1<r_2<t}\d  B_{r_2}(n)\circ\d  B_{r_1}(n),n\in\mathbb{N}\}$, if it exists, will depend on the choice of the approximating sequence. On the other hand, any limit ought to satisfy the constraint
\begin{equation}
\label{algebraic_relation}
B^2_{st}- B^2_{s\theta } - B^2_{\theta t} = B^1_{\theta t}\circ B^1_{s\theta},\quad \text{for any}\enskip 0\leq s\leq \theta \leq t\leq T,
\end{equation} 
which reflects the linearity of the integral, and its additivity with respect to the domain of integration.
An essential insight of rough paths theory is that, assuming that $B^2_{st}$ is given with \eqref{chen_relations_gene} together with suitable analytic conditions, then one can simply \emph{define} the solution $u$ to \eqref{rough_paths_eq} by the Euler-Taylor expansion \eqref{abstract_edo}.
Following Davie's interpretation of rough differential equations \cite{davie2007differential}, we will therefore say that $u$ is a solution to \eqref{rough_PDE_gene} if \eqref{abstract_edo} holds. The fact that such expansion is sufficient to fully caracterize the solution $u$ is not obvious, and is in fact a consequence of the so-called ``Sewing Lemma'', which for convenience will be stated in Proposition \ref{pro:sewing}.

\bigskip

The previous discussion depicts a non-commutative generalization of the usual rough paths theory, which has been already discussed e.g.\ in \cite{feyel2008non,coutin2014perturbed,bailleul2017unbounded,bailleul2019rough}. In this picture, real numbers -- in which the coordinates of a path $Z\colon[0,T]\to \R^m$ live -- are substituted by elements of an algebra (here a space of differential operators), and the constraint \eqref{algebraic_relation} corresponds to Chen's relations.
What plays here the role of the driving rough path for controlled differential equations is the pair $\B=(B^1,B^2).$ It is called an \emph{unbounded rough driver} (URD), and was first considered by Bailleul and Gubinelli \cite{bailleul2017unbounded} (see also \cite{deya2016priori,hocquet2017energy,hofmanova2018navier}). In the present work, we chose to restrict our attention to a subclass of URDs  that are given by differential operators. Such objects will be referred to as \emph{differential} URDs (or simply ``differential rough drivers'').
In the sequel we will denote by $\DD_i,i=1,2,$ the space of differential operators of order $i$, that is:
\begin{equation}
\left[\begin{aligned}
&\mathbb D_1:=\Big\{ X^i(x)\partial _i + Y (x),\enskip \text{such that}\enskip (X,Y)\in W^{3,\infty}\times W^{2,\infty}\Big\},
\\
&\mathbb D_2:=\Big\{ \XX ^{ij}(x)\partial _{ij}+\YY^i(x)\partial _i + \mathbb Z (x),
\\
&\quad \quad \quad \quad \quad \quad \quad 
\text{such that}\enskip 
(\XX,\YY ,\mathbb Z )\in W^{3,\infty}\times W^{2,\infty}\times W^{1,\infty}
\Big\}.
\end{aligned}\right.
\end{equation}
The space-regularity of the above coefficients is precisely enough to make sense of \eqref{rough_paths_eq} and obtain energy estimates for it.
It is indeed easily seen that the composition of two elements of $\DD_1$ is an element of $\DD_2,$ while we also have the property that
\[\mathbb D_i\subset \bigcap _{k=-3+i}^3\mathscr L(W^{k,p},W^{k-i,p})\quad  \text{for} \enskip i=1,2\enskip 
\text{and}\enskip p\in[1,\infty]\,.
\]
These properties which will be extensively used in the sequel.

We have the following definition.
\begin{definition}[unbounded rough driver]
	\label{def:rough_driver}
	Let $\alpha >1/3.$
	
	\noindent A $2$-index family $\B_{st}\equiv(B^1_{st},B^2_{st})_{(s,t)\in\Delta }$ of linear operators in $L^2(\R^d)$ is called a $\V^\alpha $-{\normalfont\textbf{unbounded rough driver}} if and only if: 
	\begin{enumerate}[label=(URD\arabic*)]
		\item \label{RD1}
		$B^i$ takes values in $\cap _{k=-3+i}^3\mathscr L(H^k,H^{k-i})$ for $i=1,2,$ and there exists a control $\omega_B :\Delta\to\R_+ $ such that
		\begin{equation}
		\label{bounds:rough_drivers}
		|B^i_{st}|_{\mathscr L(H^k,H^{k-i})}\leq\omega _B(s,t)^{i\alpha}\,, 
		\end{equation}
		for every $(s,t)\in\Delta ,$ any $i\in\{1,2\}$ and $k=-3+i,\dots,3.$
		\item \label{RD2}
		Chen's relations hold true, namely, for every $(s,\theta ,t)\in\Delta _2,$ we have in the sense of linear operators:
		\begin{equation}\label{chen}
		\delta B^1_{s\theta t}=0\,,\quad \delta B^2_{s\theta t}=B^1_{\theta t}\circ B^1_{s\theta } \,.
		\end{equation}
	\end{enumerate}
	
	Moreover, we will say that 
	\begin{enumerate}[label=(URD$\star$)]
		\item $\B$ is {\normalfont\textbf{differential}} if $\B$ is an unbounded rough driver such that 
		\[B^i_{st}\in \mathbb D_i,\quad  \text{for} \enskip i=1,2\enskip \text{and}\enskip (s,t)\in\Delta .\]
	\end{enumerate}
	
	Finally, let $(E_k)_{k\in I}$ be a scale such that there exists $p\in[1,\infty]$ with the property that
	$E_k\hookrightarrow W^{k,p}$ for each $k\in I.$ We will say that
	\begin{enumerate}[label=(URD$\star\star$)]
		\item \label{acts_Ek}
		$\B$  {\normalfont\textbf{acts on the scale $(E_k)_{-3\leq k\leq 3}$}} if
		\[
		B^i_{st}E_k\subset E_{k-i}\,,\quad  -3\leq k-i\leq k\leq 3,\enskip (s,t)\in\Delta ,
		\]
		and if the estimate \eqref{bounds:rough_drivers} is satisfied with $(H^k)$ being replaced by $(E_k).$
	\end{enumerate}
\end{definition}
\begin{remark}
	Regarding the definition of $\DD_i$ for $i=1,2,$ any differential, unbounded rough driver can in fact be extended to a family of differential operators acting on the Sobolev scale $(W^{k,p})_{-3\leq k\leq 3},$ for each $p\in [1,\infty].$ For simplicity, in the following we will use the same symbol $\B$ for every such extension.
\end{remark}

Note that, if $ B\colon[0,T]\to \DD_1$ is a continuous path with finite variation (with respect to the operator-norm of $\cap _{k=-2}^3\mathscr L(H^k,H^{k-1})$), 
one can always define the \emph{canonical lift} $S_2(B)$ 
as the differential rough driver $\B\equiv(B^1,B^2)$ given by
\begin{equation}
\label{can_lift}
S_2(B):=
\B\enskip \text{with}\enskip 
\left\{\begin{aligned}
&B^1_{st}:= B_t-B_s \in \DD_1
\quad 
\text{and}
\\
&B^2_{st}:= \int_s^t \d B_r\circ (B_r-B_s)\in \DD_2\,.
\end{aligned}\right.
\end{equation}  
The above integral is well-defined in the sense of Riemann-Stieltjes, in the space $\DD_2$ endowed with the natural operator-norm topology.

This basic observation leads us to the following definition.
\begin{definition}[Geometric differential rough driver]
	\label{def:geometric}
	Let $p\in[1,\infty].$
	Given a differential rough driver $\B$ with regularity $\alpha >1/3,$
	we will say that $\B$ is {\normalfont\textbf{geometric}} if there exists a sequence of paths $ B(n) \in C^1(0,T;\DD_1),n\geq 0,$  such that letting 
	\[\B(n):=S_2( B(n))\,,\]
	it holds
	\begin{multline}
	\label{topology_B}
	\rho _{\alpha } (\B(n),\B)\enskip :=\enskip 
	\sum\nolimits _{k=-2}^3\| B(n)-B\|_{L^\infty(0,T;\mathscr L(H^k,H^{k-1}))}
	\\
	+\sum\nolimits_{i=1}^2\sum\nolimits_{k=-3+i}^{3}\n{B^{i}(n)-B^{i}}_{\V^{i\alpha}(0,T;\mathscr L(H^{k},H^{k-i}))}
	\underset{n\to\infty}{\longrightarrow}0.
	\end{multline}
\end{definition}

\begin{example}
	\label{ex:URD_gene}
	Recall that a continuous, $m$-dimensional, $q$-rough path with $q=\frac1\alpha ,$ is a pair \begin{equation}
	\label{analytic_conditions}
	\mathbf Z\equiv(Z_{st}^{1,\mu },Z_{st}^{2,\mu \nu  })_{\substack{1\leq \mu , \nu  \leq m\\(s,t)\in \Delta }}\quad \text{in}\quad \V^{\alpha }_2(0,T;\R^{m}) \times \V^{2\alpha}_2(0,T;\R^{m\times m}),                                                                                                                                                                                                                                                  \end{equation}
	such that Chen's relations  hold, namely:
	\begin{equation}\label{chen_relations_gene}
	\delta Z_{s\theta t}^{1,\mu } =0\,,\quad \delta Z_{s\theta t}^{2,\mu \nu  }=Z_{s\theta }^{1,\mu } Z_{\theta t}^{1,\nu } \,,\quad\text{for}\enskip (s,\theta ,t)\in\Delta _2\,,\enskip 
	\enskip 1\leq \mu , \nu  \leq m.
	\end{equation}
	Roughy speaking, the relations \eqref{chen_relations_gene} indicate that $Z^{1,\mu }_{st}$ has the form $Z_t^\mu -Z_s^\mu \equiv \int_s^t\d Z^\mu _r$ for some path $Z\colon[0,T]\to \R^m$ while $Z^{2,\mu \nu }_{st}$ should be thought of as a prescribed value for $\iint_{s<r_2<r_1<t}\d Z^\nu_{r_1} \d Z^\mu_{r_2} .$ If $Z$ is smooth, we can define a canonical lift $\Z$ via \eqref{can_lift}, replacing the operation $\circ$ by the tensor product.
	By definition, the set of geometric rough paths corresponds to the closure of such canonical lifts, with respect to the natural $q$-variation metric. We refer the reader to the monographs \cite{lyons2002system,friz2010multidimensional,friz2014course} for a thorough introduction to geometric rough paths.

	Now, consider a rough path $\Z$,
	and let $\sigma \in W^{3,\infty}(\R^d;\R^{m\times d})$, $\rho \in W^{2,\infty}(\R^d,\R^m),$
	and for $(s,t)\in\Delta ,$ $i=1,2,$ define $\B\equiv(B^1,B^2)$ as:
	\[
	\left\{\begin{aligned}
	B^1_{st} 
	&:= Z^{1,\mu}_{st} (\sigma ^{\mu}_j  \partial _j    + \rho ^\mu  ),\quad
	\\
	B^2_{st}
	&:=Z_{st}^{2,\mu\nu }(\sigma ^{\mu }_j \partial _j +\rho ^{\mu})(\sigma ^{\nu }_i \partial _{i }  + \rho ^{\nu } ),
	\end{aligned}\right.
	\]
	for every $(s,t)\in\Delta .$
	It is straighforward to check that $\B$ satisfies \ref{RD1}-\ref{RD2}. Hence it is a differential rough driver. Moreover, it is geometric if $\Z$ is geometric.
\end{example}

Given $B\in\V^\alpha (0,T;\DD_1),$ by definition of $\DD_1$ it is always possible to write $B_t$ in terms of some family of bounded and measurable coefficients $X^i_t(x),$ $i=0,\dots d$ so that
\begin{equation}
\label{form_B}
\begin{aligned}
B_t
&:=X^i_t(x)\partial _i +X^0_t(x)\,.
\end{aligned}
\end{equation} 
In Appendix \ref{app:algebraic}, we shall see that there is a one-to-one correspondence between coefficients and elements of $\DD_1,$ and that it yields a continuous isomorphism, see \eqref{iso_j}. In particular, we can assume without any loss of generality that $X^i\in \V^\alpha (0,T; W^{3,\infty}),i=1,\dots ,d,$ while $X^0\in \V^\alpha (0,T; W^{2,\infty}).$ For notational convenience, in the remainder of the paper we shall assume that $B$ has the form \eqref{form_B} . Moreover, we will make use of the shorthand notation
\[
X_{st}:=X_t-X_s\,,
\]
hence blurring the difference between the value $X_t(x)$ of the coefficient path associated with $B_t$ and that of its increments $\delta X_{st}(x).$

It turns out that, for geometric differential rough drivers, there is an ensemble of very convenient algebraic rules, as illustrated in the following result.
We insist on the fact that these rules are a consequence of the geometricity assumption: no further assumption is required on $\B$.
The proof of the following lemma is rather simple and merely algebraic, hence we postpone it until Appendix \ref{app:algebraic}.

\begin{lemma}
	\label{lem:multiplication}
	Let $\B$ be a geometric differential rough driver such that $B^1_{st}=\delta B_{st}$ where $B_t$ is as before.
	The following assertions are true:
	\begin{enumerate}[label=(\roman*)]
		\item \label{weak_G_1} (Weak geometricity I)
		There exist coefficients $\LL^i\in \V_2^{2\alpha }(W^{2,\infty}),$ $i=0,\dots d$ such that 
		\begin{equation}
		\label{B2}
		B^2_{st}= \frac12X_{st}^iX_{st}^j\partial _{ij} + \left(\LL^i_{st} + X^0_{st}X^i_{st} \right)\partial _i + \LL^0_{st} + \frac12 (X^0_{st})^2\,.
		\end{equation} 
		
		\item (Generalized Chen's relations) For each $(s,\theta ,t)\in \Delta _2,$ a.e.\ in $\R^d$, it holds
		\begin{equation}
		\label{gene:chen}
		\delta \LL_{s\theta t}^i 
		= X_{\theta t}^j\partial _j(X_{s\theta }^i)\,,
		\quad i=0,\dots d\,.
		\end{equation}
		
		\item \label{weak_G_2} (Weak geometricity II) We have
		\[ B^2_{st}=\frac12B^1_{st}\circ B^1_{st} + [\B]_{st}
		\]
		where the ``bracket''  $[\B]$ is a family of first-order differential operators, explicitly given by:
		\[
		[\B]_{st}:= \left(\LL^i _{st}-\frac12X^j_{st}\partial _j X_{st}^i\right)\partial _i+ \LL^0_{st}-\frac12 X^j_{st}\partial _jX^0_{st}\,.
		\]
	\end{enumerate}
\end{lemma}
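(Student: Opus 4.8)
The plan is to prove everything by approximation: take the sequence $B(n)\in C^1(0,T;\DD_1)$ from the definition of geometricity, establish the three identities for the canonical lifts $\B(n)=S_2(B(n))$ by direct (smooth, Riemann--Stieltjes) computation, and then pass to the limit using the convergence $\rho_\alpha(\B(n),\B)\to 0$. Since $B(n)\to B$ in particular in $\V^\alpha(0,T;\DD_1)$, we may write $B(n)_t = X^i_t(n)\partial_i + X^0_t(n)$ with $X^i(n)\to X^i$ in $\V^\alpha(0,T;W^{3,\infty})$ (for $i=1,\dots,d$) and $X^0(n)\to X^0$ in $\V^\alpha(0,T;W^{2,\infty})$, using the isomorphism between coefficients and elements of $\DD_1$ recorded in Appendix \ref{app:algebraic}. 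All three claimed formulas are \emph{closed} relations among the coefficients $X^i$ and the second-level coefficients $\LL^i$, so once they are known for each $n$ it suffices that the relevant coefficients converge in a topology in which multiplication and the operations $\partial_j$, $X\mapsto X^j\partial_j X^i$ are continuous — and $W^{2,\infty}$ multiplied against $W^{3,\infty}$ lands in $W^{2,\infty}$, with $\partial_j\colon W^{3,\infty}\to W^{2,\infty}$ bounded, so this works.

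\textbf{Step 1 (smooth case).} For $B(n)$ of class $C^1$, compute $B^2_{st}(n)=\int_s^t \d B(n)_r\circ(B(n)_r-B(n)_s)$ explicitly. Writing $B(n)_r = X^i_r(n)\partial_i + X^0_r(n)$ and expanding the composition of first-order operators,
\[
B^2_{st}(n) = \int_s^t \big(\d X^i_r(n)\,\partial_i + \d X^0_r(n)\big)\circ\big(X^j_{sr}(n)\partial_j + X^0_{sr}(n)\big),
\]
and carrying out the composition (noting $\partial_i\circ(g\,\partial_j) = (\partial_i g)\partial_j + g\,\partial_{ij}$, etc.) one reads off that the second-order part is $\big(\int_s^t \d X^i_r(n)\,X^j_{sr}(n)\big)\partial_{ij}$, which by the classical symmetry of the iterated Young integral equals $\tfrac12 X^i_{st}(n)X^j_{st}(n)\partial_{ij}$; the zeroth-order part similarly gives $\tfrac12(X^0_{st}(n))^2 + \int_s^t \d X^i_r(n)\,\partial_i X^0_{sr}(n)$; and the first-order part produces the coefficient of $\partial_i$, which we simply \emph{define} to be $\LL^i_{st}(n) + X^0_{st}(n)X^i_{st}(n)$ — explicitly $\LL^i_{st}(n) := \int_s^t \d X^j_r(n)\,\partial_j X^i_{sr}(n)$ for $i=1,\dots,d$, and $\LL^0_{st}(n):=\int_s^t \d X^j_r(n)\,\partial_j X^0_{sr}(n)$. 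This gives \eqref{B2} for $\B(n)$. The generalized Chen relation \eqref{gene:chen} for $\LL^i(n)$ is then the additivity of the Young integral over $[s,\theta]\cup[\theta,t]$: $\LL^i_{st}(n) - \LL^i_{s\theta}(n) - \LL^i_{\theta t}(n) = \int_\theta^t \d X^j_r(n)\,\partial_j X^i_{s\theta}(n) = X^j_{\theta t}(n)\,\partial_j X^i_{s\theta}(n)$ (the last step because $X^i_{s\theta}(n)$ is constant in $r$). Part \ref{weak_G_2} is then pure algebra: expand $\tfrac12 B^1_{st}(n)\circ B^1_{st}(n) = \tfrac12(X^i_{st}(n)\partial_i + X^0_{st}(n))^2$, subtract from \eqref{B2}, and collect terms to get exactly the stated $[\B]_{st}(n)$.

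\textbf{Step 2 (passage to the limit).} By the uniform bounds \ref{RD1} for $\B(n)$ and the $\V^{2\alpha}$-convergence $B^2(n)\to B^2$ together with the coefficient convergence of Step~0, the quantities $\LL^i_{st}(n)$ converge in $\V^{2\alpha}_2(W^{2,\infty})$ to limits $\LL^i_{st}$ — here one uses that the map sending the $\V^{2\alpha}$-coefficient path of a second-order differential operator to its $\partial_i$-coefficient is continuous (again by the isomorphism of Appendix \ref{app:algebraic}), so $\LL^i_{st} + X^0_{st}X^i_{st}$ is the limit of $\LL^i_{st}(n)+X^0_{st}(n)X^i_{st}(n)$, and the product term converges by continuity of multiplication $W^{2,\infty}\times W^{3,\infty}\to W^{2,\infty}$; hence $\LL^i(n)\to\LL^i$. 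Each of the three identities, being a continuous relation in these convergent quantities, passes to the limit, yielding \eqref{B2}, \eqref{gene:chen} and part \ref{weak_G_2} for $\B$ itself.

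\textbf{Main obstacle.} The delicate point is not any single computation but making the limiting argument airtight: one must check that the second-level coefficient path $\LL^i$ is genuinely well-defined as an element of $\V^{2\alpha}_2(W^{2,\infty})$ independently of the approximating sequence, and that extracting a given coefficient of a differential operator is a bounded operation in the relevant norms. This is precisely what the coefficients-to-operator isomorphism of Appendix \ref{app:algebraic} is designed to supply, so modulo that structural lemma the proof is routine; accordingly it is natural to defer the whole argument to the appendix, as the authors announce.
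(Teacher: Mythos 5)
Your proposal is correct and follows essentially the same route as the paper's proof in Appendix A.1: compute $B^2_{st}(n)$ explicitly for the smooth canonical lifts, use the shuffle/integration-by-parts identity $\mathrm{sym}\,\XX^{ij}=\tfrac12 X^iX^j$ together with the symmetry of $\partial_{ij}$, read off the coefficients to define $\LL^i$, verify \eqref{gene:chen} and part~\ref{weak_G_2} by direct algebra, and pass to the limit via the coefficient isomorphism $(j_1,j_2)$. The only minor imprecision is your attribution of the step $\big(\int_s^t \d X^i_r\,X^j_{sr}\big)\partial_{ij}=\tfrac12 X^i_{st}X^j_{st}\partial_{ij}$ to "classical symmetry of the iterated Young integral" — the paper is more careful to separate the two ingredients (the antisymmetric part of $\int X^i_{sr}\d X^j_r$ is annihilated by $\partial_{ij}$ via Schwarz, and the symmetric part equals $\tfrac12 X^iX^j$ by integration by parts) — but your conclusion is right.
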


\begin{notation}
	\label{not:summarize}
	For convenience, we will summarize the above properties by using the shorthand notation
	\begin{equation}
	\label{shorthand_notation}
	\B \sim \X=(X^i,\LL^i)_{i=0,\dots,d}.
	\end{equation}
\end{notation}

\begin{remark}
	If $(t\mapsto X_t\in W^{3,\infty})$ has finite variation, $\LL$ is explicitly given as
	\[
	\begin{aligned}
	\LL^i_{st}
	:=\int_s^t \d X_r \cdot \nabla (X^i_{sr}),\quad i=0,\dots d\,.
	\end{aligned}
	\]
	Roughly speaking, $\LL$ can be thought of as a  differential rough driver analogue of the usual \emph{L\'evy area} for rough paths, in the sense that the knowledge of $\LL$ is enough to compute the second level $B^2$ of $\B,$ as is the case for a geometric rough path (see \cite[Definition 13.2]{friz2010multidimensional}).
	
	In fact, if $\B$ is the pair defined in Example \ref{ex:URD_gene} with $\Z$ geometric and $\rho =0$, a routine calculation shows the identity
	\[
	\LL_{st}\cdot \nabla =\frac12Z^\mu_{st} Z^\nu _{st}(\sigma ^\mu \cdot \nabla \sigma ^\nu )\cdot \nabla 
	+\frac12\A_{st}^{\mu \nu }[\sigma ^\mu\cdot \nabla  ,\sigma ^{\nu }\cdot \nabla ]
	\]
	where we denote by $\sigma ^\mu \cdot \nabla :=\sigma ^{\mu i}\partial _i,$ while $\mathbb A_{st}$ is the L\'evy area of $\Z,$ and $[\cdot ,\cdot ]$ denotes the usual Lie bracket of vector fields.
\end{remark}

\begin{remark}
	As for the usual geometric rough paths, the question may arise whether the algebraic constraints \ref{weak_G_1} and \ref{weak_G_2} imply the geometricity of $\B$ (see \cite[Chapter 9]{friz2010multidimensional}). We conjecture that, upon taking $\alpha $ slighlty smaller, and under ``reasonable'' conditions on the regularity of the coefficients, the answer should be positive. However, we prefer to leave this issue for future investigations.
\end{remark}

\subsection{Notions of solution}
\label{subsec:notion}

In the whole paper, we consider an ansatz of the form
\begin{equation}
\begin{aligned}
\label{rough_PDE_gene}
\d v=(\partial _if^i +f^0)\d t+\d\B  g,\quad 
\text{on}\enskip [0,T]\times \R^d,
\\
v_0=v^0\in L^p\,,
\end{aligned}
\end{equation}
with $\B \sim (X^i,\LL^i)_{i=0,\dots,d}$ being geometric. The drift term $f^i,i=0,\dots d$ is $p$-integrable as a mapping from $[0,T]$ into $L^{p}$ for some $p\in[1,\infty),$ and the derivation $\partial _i=\frac{\partial }{\partial x_i}$ is understood in distribution sense. By assumption, $g$ will be \emph{controlled} by $B_t$, and so the solution $v$ should be. This means the following.

\begin{definition}
	\label{def:controlled_by}
	Given $g\in L^\infty(0,T;L^p) \cap \V^\alpha (0,T;W^{-1,p})$ we will say that $g$ is {\normalfont\textbf{controlled by $B$}}, if there exists $g'\in L^\infty(0,T;L^p) \cap \V^{\alpha }(0,T;W^{-1,p})$ such that the element $R^g$ of $\V_2^\alpha (0,T;W^{-1,p})$ defined as
	\begin{equation}
	\label{R_g}
	R^g_{st}:=\delta g_{st}-B_{st}^1g_s',\quad \text{for every}\enskip (s,t)\in\Delta ,
	\end{equation}
	verifies
	\begin{equation}
	\|R^g\|_{\V_2^{2\alpha }(0,T;W^{-2,p})}<\infty
	\end{equation}
	(notice the loss of a space-derivative in the above).
	Abusively, we call $g'$ ``the Gubinelli derivative'' of $g,$ though $g'$ could be non-unique in principle (at least without any further assumption on $B,$ see Section \ref{app:true_roughness}).
\end{definition}

That the unknown $v$ should be controlled by $B$ implies in particular boundedness for the path $v\colon[0,T]\to L^p$ and also weak-star continuity (hence allowing to give a meaning to the initial condition). In a large part of the sequel we will encounter the situation where $v=g=g'$ but this fact is not needed in the definition of a solution, so we will keep things on the more general form \eqref{rough_PDE_gene} for the moment.

The following notion of solution was introduced in \cite{bailleul2017unbounded}, see also \cite{deya2016priori}.
\begin{definition}[weak-solution]
	\label{def:weak_sol}
	Let $T>0$, $\alpha \in(1/3,1/2]$ and fix $p\in [1,\infty].$
	Assume that we are given
	$f^i\in L^1(0,T;L^p),$ $i=0,\dots d,$ and that $g$ is controlled by $B$ with Gubinelli derivative $g',$
	with $g,g'$ both belonging to $L^\infty(0,T;L^p).$
	A mapping $v\colon[0,T]\to L^p$ is called an {\normalfont\textbf{$L^p$-weak solution}} to the
	rough PDE \eqref{rough_PDE_gene} if it fulfills the following conditions
	\begin{enumerate}[label=(\arabic*)]
		\item $v\colon[0,T]\to L^p$ is bounded as a path taking values in $L^p$; moreover, $v$ belongs to $\V^\alpha (0,T;W^{-1,p})$;
		\item for every $\phi \in L^{p'}$ with $1/p+1/p'=1,$ $\lim_{t\to 0}\int_{\R^d}(v_t-v^0)\phi \d x= 0$;
		\item for every $\phi \in W^{2,p'}$ with $1/p+1/p'=1$, and every $(s,t)\in\Delta :$
		\begin{equation}\label{nota:solution}
		\int_{\R^d}\delta v_{st}\phi \d x=\iint_{[s,t]\times\R^d} (-f^i\partial _i\phi + f^0\phi ) \d x\d r+ \int_{\R^d}\left(g_sB^{1,*}_{st}\phi +g'_sB^{2,*}_{st}\phi\right)\d x +\langle v_{st}^\natural,\phi \rangle\,,
		\end{equation}
		for some $v^\natural\in \V_{2,\loc}^{1+}(0,T;W^{-3,p}).$
	\end{enumerate}
\end{definition}

The notion of weak solution fulfills the minimal requirements under which remainder estimates (and thus estimates on rough integrals) can be obtained, see Proposition \ref{pro:apriori}. In the sequel however, we will mostly work in a parabolic context, where solutions happen to live in a ``better space'' than the one described above.
This motivates the introduction of the following.

\begin{definition}[Energy solution]
	\label{def:var_sol}
	Letting $p,p'\in[1,\infty]$ so that $1/p+1/p'=1,$ we will say that
	$v$ is an {\normalfont\textbf{$L^p$-energy solution}} of \eqref{rough_PDE_gene} if it is a weak solution such that additionally
	\begin{equation}
	\label{Lp_sol} 
	v\in L^p(0,T; W^{1,p}).
	\end{equation} 
	
	Similarly, we will say that $v$ is a $L^p_\loc$-energy solution (or $L^p(U)$-energy solution if $U\subset \R^d$) if it fulfills the above properties, where each occurence of the Sobolev spaces in the space-like variable is replaced by its local counterpart.
\end{definition}

\subsection{Rough parabolic equations}
In this section, we consider the rough parabolic equation
\begin{equation}
\label{rough_parabolic}
\left\{
\begin{aligned}
&\d u_t-(A_tu +f_t(x))\d t= \left(\d \X_t^i\partial _i +\d \X_t^0\right)u_t \enskip,
\quad \text{on}\enskip (0,T]\times \R^d\,,
\\
&u_0=u^0\in L^p(\R^d)\,,
\end{aligned}\right.
\end{equation}
where 
\begin{equation}
\label{A}
A_t=\partial _i(a^{ij}(t,\cdot )\partial _j\, \cdot \,)
\end{equation}
is given, and we assume the following on $a.$

\begin{assumption}
	\label{ass:A}
	The coefficients $a=(a^{ij})_{1\leq i,j\leq d}$ are measurable, symmetric in $i$ and $j$ and moreover there
	exists a constant $\lambda>0$ such that for a.e.\ $(t,x)\in [0,T]\times \R^d:$
	\begin{equation}\label{uniform_ellipticity}
	\lambda \sum\nolimits_{i=1}^d\xi _i^2\leq \sum\nolimits_{1\leq i,j\leq d} a^{ij}(t,x)\xi _i\xi _j\leq \lambda ^{-1}\sum\nolimits_{i=1}^d\xi _i^2
	\,,\quad \text{for all}\enskip \xi \in \R^d\,.
	\end{equation}
\end{assumption}

Concerning the rough part, the following hypotheses will be assumed throughout the paper.

\begin{assumption}
	\label{ass:B}
	For some fixed $\alpha >1/3,$ we are given a coefficient path
	\[
	\big(t\mapsto (X_t^i)_{i=0,\dots ,d}\big) \in \V^\alpha \left(0,T;W^{2,\infty}\times (W^{3,\infty})^d\right)\,,
	\]
	while $(t\mapsto X^0) \in \V^\alpha (0,T;W^{2,\infty}).$
	These coefficients are given together with a two parameter family 
	\[
	\big((s,t)\mapsto (\LL_{st}^i)_{i=0,\dots,d})\big)\in \V^{2\alpha }_2\left(0,T;W^{1,\infty}\times (W^{2,\infty})^d\right)
	\]
	which satisfies the generalized Chen's relation 
	\[
	\delta \LL^i_{s\theta t}=X^j_{\theta t}\partial _j(X_{s\theta }^i)
	\]
	for each $0\leq s\leq\theta \leq t\leq T$ and $i=0,\dots ,d.$

	We then let $B_t:=X_t\cdot \nabla +X^0_t$ 
	and define a differential rough driver $\B\sim \X=(X,\LL)$ as in Lemma \ref{lem:multiplication}. Hence, it corresponds to the pair $(B^1,B^2)$ where
	\[\left[\begin{aligned}
	&B^1_{st}=B_t-B_s=
	X_{st}^i\partial _i + X^0_{st}\,,
	\\
	&B^2_{st}= \frac12X^i_{st}X^j_{st}\partial _{ij} +(\LL^i_{st} + X^0_{st}X^i_{st})\partial _i + \LL^0_{st} + \frac12(X^0_{st})^2\,\quad 
	\quad (s,t)\in \Delta ,
	\end{aligned}\right.
	\]
	where we recall that $X_{st}:=X_t-X_s.$
	
	Furthermore, we assume that $\B$ is geometric, in the sense of Definition \ref{def:geometric}.
\end{assumption}

Our first result is about the solvability of \eqref{rough_parabolic}, and completes the results obtained in the previous work \cite{hocquet2017energy}.
The proof will be given in Section \ref{sec:free}.
\begin{theorem}
	\label{thm:free_intro}
	Let $f\in L^2(0,T;H^{-1}),$ fix $u_0\in L^2$ and consider a geometric, differential rough driver $\B\sim (X^i,\LL^i)_{i=0,\dots,d}$ as in Assumption \ref{ass:B}.
	There exists a unique $L^2$-energy solution $u=u(u_0,f;\B)$ to \eqref{rough_parabolic}. 
	
	In addition, the solution map is continuous in the following sense
	\begin{enumerate}[label=(\arabic*)]
		\item
		for every $(u_0,f)\in L^2\times L^2(H^{-1})$, the map $\B\mapsto u(u_0,f;\B) \in L^\infty(L^2)\cap L^2(H^1)$ is weakly-star continuous with respect to the rough driver distance $\rho _\alpha $ introduced in \eqref{topology_B}.
		\item
		for $\B$ fixed the map $u(\cdot ,\cdot ;\B):L^2\times L^2(H^{-1}) \to L^\infty(L^2)\cap L^2(H^1)$ is continuous, with respect to the strong topologies.
	\end{enumerate}
\end{theorem}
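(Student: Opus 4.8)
The plan is the classical route for rough parabolic problems: first establish an a priori energy estimate valid for every $L^2$-energy solution, then deduce uniqueness and both continuity statements from it together with the linearity of \eqref{rough_parabolic}, and finally obtain existence by compactness after smoothing the driver. To get the energy estimate I would apply the product formula of Proposition~\ref{pro:product} with both controlled factors equal to $u$ (equivalently, test \eqref{nota:solution} against $u$), obtaining an expansion for $\delta(|u|_{L^2}^2)_{st}$ whose leading contributions are: the dissipative term $-2\int_s^t\langle a\nabla u_r,\nabla u_r\rangle\,dr\le-2\lambda\int_s^t|\nabla u_r|_{L^2}^2\,dr$ coming from $A$; the free term, bounded by $2\int_s^t|f_r|_{H^{-1}}|u_r|_{H^1}\,dr$; and a rough contribution. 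The point where geometricity is essential is that, by the identity $B^2=\tfrac12B^1\circ B^1+[\B]$ of Lemma~\ref{lem:multiplication} with $[\B]$ of first order, the a priori ill-defined second-order part of $B^2$ reorganizes itself so that, paired against $u\otimes u$ and after the tensorization/commutator analysis of Section~\ref{sec:space} and Appendix~\ref{app:renorm}, the rough contribution is controlled by $\omega_B$ times powers of $\sup_r|u_r|_{L^2}$, with no term competing with $-2\lambda\int|\nabla u|^2$. Absorbing $2\int_s^t|f_r|_{H^{-1}}|u_r|_{H^1}\,dr$ into the dissipative term by Young's inequality and applying the Rough Gronwall Lemma then gives
\[
\sup_{t\in[0,T]}|u_t|_{L^2}^2+\int_0^T|\nabla u_t|_{L^2}^2\,dt\;\lesssim\;|u_0|_{L^2}^2+\|f\|_{L^2(0,T;H^{-1})}^2,
\]
with a constant depending only on $\lambda$, $T$ and the control $\omega_B$.

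Uniqueness is then immediate: the difference $w=u^1-u^2$ of two $L^2$-energy solutions with the same data is itself an $L^2$-energy solution of \eqref{rough_parabolic} with zero data, so the estimate forces $w\equiv0$. For assertion (2), linearity gives $u(u_0,f;\B)-u(\tilde u_0,\tilde f;\B)=u(u_0-\tilde u_0,f-\tilde f;\B)$, whose $L^\infty(L^2)\cap L^2(H^1)$-norm is bounded by $|u_0-\tilde u_0|_{L^2}+\|f-\tilde f\|_{L^2(0,T;H^{-1})}$ by the energy estimate, so the map is in fact Lipschitz. For assertion (1), if $\rho_\alpha(\B(n),\B)\to0$ then $\omega_{B(n)}$ can be chosen bounded uniformly in $n$ by \eqref{topology_B}, hence $u^n:=u(u_0,f;\B(n))$ is bounded in $L^\infty(L^2)\cap L^2(H^1)$ and, through the equation and the remainder estimate of Proposition~\ref{pro:apriori}, in $\V^\alpha(0,T;W^{-1,2})$; extracting a subsequence converging weakly-$\ast$ in $L^\infty(0,T;L^2)$ and weakly in $L^2(0,T;H^1)$ and passing to the limit in \eqref{nota:solution} identifies the limit as an $L^2$-energy solution with driver $\B$, hence as $u(u_0,f;\B)$ by uniqueness; since every subsequence has a further subsequence with the same limit and the family is bounded, the whole sequence converges, which is the asserted weak-$\ast$ continuity.

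It remains to produce one solution. I would pick $B(n)\in C^1(0,T;\DD_1)$ with $\B(n)=S_2(B(n))\to\B$ in $\rho_\alpha$ as in Definition~\ref{def:geometric}, and regularize $a$ (keeping the ellipticity bounds \eqref{uniform_ellipticity}), $f$ and $u_0$. For each $n$ the driver has finite variation, so \eqref{rough_parabolic} is a genuine linear parabolic PDE with a smooth first-order perturbation, whose unique weak solution $u^n$ is given by the classical variational method (or, alternatively, by removing the transport term via the flow of $X^n$ and solving the resulting standard problem). By the energy estimate, with constants uniform in $n$, the family $\{u^n\}$ is bounded in $L^\infty(L^2)\cap L^2(H^1)\cap\V^\alpha(0,T;W^{-1,2})$, and passing to the limit exactly as in the proof of (1) yields an $L^2$-energy solution of \eqref{rough_parabolic} with the original data.

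The main obstacle I expect is the energy estimate itself: giving a rigorous meaning to the rough term paired against $u$ when $u$ is only $\alpha$-Hölder into $W^{-1,2}$, and verifying that geometricity prevents a term competing with the dissipation — this is the parabolic counterpart of the Stratonovitch-versus-It\^o discussion of the introduction, and technically rests on the tensorization/commutator lemma of Section~\ref{sec:space} and Appendix~\ref{app:renorm}. A secondary difficulty is the identification of the limit in the rough terms, which requires combining the operator-norm convergence $B^i(n)\to B^i$ with the weak convergence of $u^n_s$ for every (not merely almost every) $s$, the latter supplied by the uniform $\V^\alpha(0,T;W^{-1,2})$-bound via an Arzel\`a--Ascoli argument.
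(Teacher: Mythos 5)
Your proposal is correct and follows essentially the same route as the paper's proof of Theorem~\ref{thm:free}: energy estimate via the product formula (Proposition~\ref{pro:product}) applied to $u=v$, Rough Gronwall, uniqueness and Lipschitz dependence on $(u_0,f)$ by linearity, and existence plus weak-$*$ stability in $\B$ by smoothing the driver and passing to the limit via the compactness/identification argument of Lemma~\ref{lem:stability}. (The additional regularization of $a$, $f$, $u_0$ in your existence step is harmless but not needed: once the driver is smooth in time, the classical variational theory already handles $a$ bounded measurable, $f\in L^2(H^{-1})$ and $u_0\in L^2$.)
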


Before we state our second main result, we shall first define a set of admissible functions $F:\R\to\R$ for which right-composition with a solution is possible.
We let
\begin{equation}
C^2_\mathrm{adm}:=\{F\in C^2(\R;\R),\,\text{s.t.\ }F(0)=F'(0)= 0 \enskip \text{and}\enskip |F''|_{L^\infty}<\infty\}.
\end{equation}
With this definition, we have the following result.

\begin{theorem}[It\^o Formula]
	\label{thm:ito_transport}
	Let $A$ satisfying Assumption \ref{ass:A}, let $\B\sim (X,\LL)$ such that Assumption \ref{ass:B} holds with $X^0=0$.
	Let $u$ be an $L^2$-energy solution of \eqref{rough_parabolic}.
	The following assertions are true.
	
	\begin{enumerate}[label=(\roman*)]
		\item For every $F\in C^2_\mathrm{adm}$ it holds the chain rule
		\begin{equation}
		\label{chain_rule_thm}
		\d F(u) = F'(u)(Au + f) \d t + \d\B  [F(u)],
		\end{equation}
		in the sense that the path $[0,T]\to L^1,$ $t\mapsto F(u_t)$ is controlled by $B$ with Gubinelli derivative $(F(u_t))'=F(u_t)$ and is an $L^1$-energy solution to the above equation.
		More explicitly, we have for any $\phi \in W^{3,\infty}$ and $0\leq s\leq t\leq T:$
		\begin{multline}
		\label{chain_rule_thm_expl}
		\int_{\R^d}\delta F(u)_{st}\phi\d x +\iint_{[s,t]\times\R^d} [F''(u)a^{ij}\partial _iu\partial _ju\phi 
		+F'(u)a^{ij}\partial _iu\partial _j\phi  ] \d x\d r 
		\\
		= \int_{\R^d}F(u_s)(B^{2,*}_{st}+B^{2,*}_{st})\phi \d x + \langle F^{\natural}_{st},\phi \rangle
		\end{multline}
		for a uniquely determined remainder term $F^\natural\in\V_{2,\loc}^{1+}(0,T;W^{-3,1}).$
		
		\item
		If $F\in C^2,$ then \eqref{chain_rule_thm} holds locally. Namely,  $t\mapsto F(u_t)$ is controlled by $B$ in the $L^1_{\loc}$-sense while \eqref{chain_rule_thm_expl}
		is true for any $\phi \in W^{2,\infty}_\loc$ and a remainder $F^\natural$ in $\V_{2,\loc}^{1+}(0,T;W^{-3,1}_\loc).$
	\end{enumerate}
\end{theorem}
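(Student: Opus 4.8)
\textbf{Proof proposal for Theorem \ref{thm:ito_transport}.}
The plan is to reduce the general $C^2_\mathrm{adm}$ case (and then, for compactly supported $u$, the general $C^2$ case) to the case of polynomials, for which the chain rule has already been established via iteration of the product formula (Proposition \ref{pro:product}) combined with the boundedness criterion of Section \ref{sec:boundedness}. The first step is a localization/approximation of $u$: since $u$ is an $L^2$-energy solution, it belongs to $L^\infty(0,T;L^2)\cap L^2(0,T;H^1)$, but it need not be bounded. Following Section \ref{sec:boundedness}, I would first treat the case where $u$ happens to be essentially bounded on $[0,T]\times\R^d$ (the dense class produced by the Moser iteration); on that class, $F(u)$, $F'(u)$ and $F''(u)$ are all uniformly bounded, and $F$ can be uniformly approximated on the relevant compact interval of $\R$ by polynomials $P_n$ with $P_n\to F$, $P_n'\to F'$, $P_n''\to F''$ uniformly on compacts (e.g.\ by convolving $F''$ and integrating twice, adjusting constants so $P_n(0)=P_n'(0)=0$). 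For each $P_n$ the chain rule \eqref{chain_rule_thm_expl} holds by the polynomial case. I would then pass to the limit $n\to\infty$ in the weak formulation \eqref{chain_rule_thm_expl}: the term $\int \delta F(u)_{st}\phi$ and the drift terms converge by dominated convergence using the bounds on $u$, $\nabla u$ and the uniform convergence $P_n^{(j)}\to F^{(j)}$; the rough term $\int F(u_s)(B^{1,*}_{st}+B^{2,*}_{st})\phi$ converges likewise; and the remainder $F^\natural$ is then \emph{defined} by difference and automatically lies in $\V^{1+}_{2,\loc}(0,T;W^{-3,1})$ by the Sewing Lemma (Proposition \ref{pro:sewing}) since the other four terms furnish an Euler expansion with the correct algebraic (Chen) structure — here the hypothesis $X^0=0$ is used so that $(F(u))'=F(u)$ is consistent with Lemma \ref{lem:multiplication}\eqref{weak_G_1}. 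This handles bounded $u$; the statement "$F(u)$ is controlled by $B$ with Gubinelli derivative $F(u)$" is read off from the same expansion.

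The second step removes the boundedness assumption on $u$. Here I would use a truncation argument at the level of $F$ rather than of $u$: given $F\in C^2_\mathrm{adm}$, the key point is that $|F(z)|\le \tfrac12|F''|_{L^\infty} z^2$ and $|F'(z)|\le |F''|_{L^\infty}|z|$ for all $z$ (using $F(0)=F'(0)=0$), so that $F(u)\in L^\infty(0,T;L^1)$, $F'(u)\nabla u \in L^1(0,T;L^1)$ and $F''(u)|\nabla u|^2\in L^1([0,T]\times\R^d)$ with bounds controlled by the energy of $u$. I would introduce $F_M:=F\cdot \chi_M$ where $\chi_M$ is a smooth cutoff equal to $1$ on $[-M,M]$, arrange $F_M\in C^2_\mathrm{adm}$, and — crucially — approximate $u$ itself by the bounded solutions $u^{(\varepsilon)}$ built in Section \ref{sec:boundedness} / the stability of Theorem \ref{thm:free_intro}, for which the chain rule is known by Step 1. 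Passing to the limit requires the strong convergences $u^{(\varepsilon)}\to u$ in $L^2(0,T;H^1)$ and a.e., which are exactly what the energy estimates and the continuity statement of Theorem \ref{thm:free_intro}(2) provide; the quadratic bounds above then give equi-integrability of all the nonlinear terms, so dominated/Vitali convergence applies, and again $F^\natural$ is obtained as the limiting remainder, lying in $\V^{1+}_{2,\loc}(0,T;W^{-3,1})$ by Sewing. Uniqueness of $F^\natural$ is immediate: two choices differ by an element of $\V^{1+}_2$ which is also the increment $\delta(\cdot)$ of a path, hence vanishes.

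For the final assertion, when $u$ is in addition compactly supported in $[0,T]\times\R^d$, I would note that $u$ then takes values in a fixed bounded set of $\R^d$-space with uniformly bounded support, so only the values of $F$ on a compact interval $[-\|u\|_\infty,\|u\|_\infty]$ matter; replacing $F$ by $\tilde F:=F - F(0) - F'(0)\,\mathrm{id}$ times a cutoff puts us in $C^2_\mathrm{adm}$ without changing $F(u)$ up to an affine function of $u$ (and affine functions are covered by linearity, using that $u$ itself solves \eqref{rough_parabolic}). Then the already-proven case applies. The main obstacle I anticipate is Step 2: controlling the rough-integral / remainder terms under a mere $L^2$-energy (not $L^\infty$) hypothesis on $u$, since $F(u)$ lands only in $L^1$ and one is forced to work in the scale $W^{-k,1}$ where duality is delicate (cf.\ the convention on $W^{k,1}$ in the Notation section) and the remainder estimates of Proposition \ref{pro:apriori} must be applied with $p=1$. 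The saving grace is that the chain rule is established on a dense (bounded) class first, so that passage to the limit only needs \emph{weak} convergence of $F^\natural$, which the Sewing Lemma upgrades automatically from convergence of the explicit terms.
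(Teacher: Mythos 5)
Your proposal is correct and follows essentially the same two-step route as the paper: first establish the formula for bounded solutions by approximating $F$ in $C^2_{\mathrm{adm}}$ by polynomials and exploiting the uniform $\H^{\alpha,1}_B$-bound (which depends on $|F''|_{L^\infty}$ and the energy but \emph{not} on the $L^\infty$-norm of $u$), then remove the boundedness assumption by perturbing $f$ into the Moser class of Assumption \ref{ass:free_bounded} so that the corresponding solutions $u(n)$ are bounded by Proposition \ref{pro:boundedness}, and passing to the limit via the strong continuity of Theorem \ref{thm:free_intro}(2) and dominated convergence. One small overcomplication: the intermediate truncation $F_M:=F\cdot\chi_M$ you introduce in Step 2 is unnecessary, precisely because the bound \eqref{pre_BLT:3} from Step 1 is already uniform in $M$ for $F\in C^2_{\mathrm{adm}}$ (it involves only $|F''|_{L^\infty}$, not the sup-norm of $u$), so Step 1 applies to the fixed $F$ and the bounded approximants $u(n)$ directly; also, Section \ref{sec:boundedness} does not "build" the approximating bounded solutions per se — they arise from the perturbed free terms $f(n)\in\mathscr M$ together with the existence theory of Theorem \ref{thm:free_intro}, a point worth stating explicitly.
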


\begin{remark}
	\label{rem:not_trivial}
	The formula \eqref{chain_rule_thm} is by no means trivial, no matter how smooth $F\colon \R\to \R$ is as a function.
	In fact, the rough integral \begin{equation}
	\label{integral_F_u}
	\int_s^tDF(u_r)[\d u_r]
	\end{equation}
	is not even well-defined a priori for an $L^2$-energy solution $u$ of \eqref{rough_parabolic}, and this is so regardless of the regularity of $F.$
	
	To wit, note that the expression \eqref{integral_F_u} implicitly assumes that $u\colon[0,T]\to L^2$ is enhanced to a rough path $\mathfrak u=(\mathfrak u^1,\mathfrak u^2)$. 
	In particular, one aims to find a topological vector space $K$ such that $L^2$ is continuously embedded in $K$ and such that $\mathfrak u^i\colon[0,T]^2\to K^{\otimes i},$ for $i=1,2.$ 
	Leaving aside the question of the choice of tensor product for $K^{\otimes 2}$ (and whether a sense can be given to the rough integral $\mathfrak u^2_{st}\equiv\int_s^t\delta u_{sr}\otimes \d u_r$ in $K^{\otimes 2}$), we see that $K$ must be chosen such that
	\begin{align}
	\label{cond:u1}
	\mathfrak u^1\equiv\delta u \in \V_2^{\alpha }(0,T;K)\,.
	\end{align}
	For an $L^2$-energy solution $u$, we only expect that $\delta u\in \V^\alpha _2(0,T;H^{-1})$ (see Section \ref{sec:controlled_paths}), and hence the condition \eqref{cond:u1} imposes that $H^{-1}\hookrightarrow K.$
	In particular, this requires that the nonlinear operator 
	\[
	\bar F:H^{-1}\to L^1,\quad u\mapsto \bar F(u):=F(u(\cdot ))
	\]
	be of class $C^1,$ which is cleary not the case of any smooth function $F$.
\end{remark}

A core argument in the proof of Theorem \ref{thm:ito_transport} is the fact that for an appropriate subclass of free terms $f,$ the solutions of \eqref{rough_parabolic} are bounded. This is stated in the following result.

\begin{theorem}
	\label{thm:boundedness}
	Let 
	\[
	f\in L^r(0,T;L^q)\,,
	\]
	where the exponents $r\in(1,\infty]$ and $q\in(1\vee\frac{d}{2},\infty)$ are subject to the conditions 
	\begin{equation}
	\label{cond:r_q_strict_intro}
	\frac{1}{r} + \frac{d}{2q}<1.
	\end{equation}
	Then, the solution $u$ obtained from Theorem \ref{thm:free_intro} is locally bounded, away from $t=0$. Precisely, for any $\tau >0$, and any compact set $K\subset\subset\R^d$,
	it holds the estimate
	\[
	\|u\|_{L^\infty([\tau ,T]\times K)}\leq C\left(\tau ,K,|u_0|_{L^2},\lambda ,\|f\|_{L^r(L^q)},\omega _B,\alpha ,r,q\right)\,\,,
	\]
	where the above constant only depends on the indicated quantities.
\end{theorem}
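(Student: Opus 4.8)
The plan is to prove the bound by a \emph{Moser iteration} performed directly on \eqref{rough_parabolic}, with the classical parabolic energy estimate replaced by the rough energy estimate underlying Theorem~\ref{thm:free_intro}. The first move is a reduction to smooth data: using the stability part of Theorem~\ref{thm:free_intro}, choose smooth paths $B(n)\in C^1(0,T;\DD_1)$ with $\B(n):=S_2(B(n))\to\B$ for $\rho_\alpha$ and $\omega_{B(n)}\le 2\,\omega_B$, together with smooth compactly supported $u_0^n\to u_0$ in $L^2$ and $f^n\to f$ in $L^r(0,T;L^q)$. For fixed $n$ the equation \eqref{rough_parabolic} is then an honest (uniformly) parabolic PDE, so $u_n$ is smooth and every chain-rule computation below is classical. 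What cannot be dispensed with, however, is that the resulting estimate for $u_n$ be \emph{uniform in $n$}; since the naive variation-based parabolic estimate blows up as $B(n)\to B$, one is forced to run the Moser scheme inside the rough framework, so that the constants depend on $\B(n)$ only through $\omega_{B(n)}\,(\le 2\omega_B)$ and $\alpha$. The conclusion for $u$ then follows by passing to the limit.

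Fix $p\ge 2$. Because $u_n$ is smooth, $|u_n|^{p-2}u_n$ is controlled by $B(n)$ with Gubinelli derivative $(p-1)|u_n|^{p-2}u_n$ (product formula, Proposition~\ref{pro:product}), so $|u_n|^{p}=\langle\,|u_n|^{p-2}u_n,\,u_n\rangle$ admits an Euler--Taylor expansion of the type \eqref{abstract_edo}. Feeding this into the Rough Gronwall argument, exactly as in the proof of Theorem~\ref{thm:free_intro} but now with the weight $|u_n|^{p-2}u_n$, one obtains an inequality of the shape
\begin{multline}\label{eq:planME}
\sup_{t\in[0,T]}\|u_{n,t}\|_{L^p}^p+\lambda\int_0^T\!\!\int_{\R^d}|u_n|^{p-2}|\nabla u_n|^2\,\d x\,\d t\\
\le\;C_0\,e^{C_1 p}\Big(\|u_0^n\|_{L^p}^p+p\int_0^T\!\!\int_{\R^d}|u_n|^{p-1}|f^n|\,\d x\,\d t\Big),
\end{multline}
with $C_0,C_1$ depending only on $\omega_B,\alpha,\lambda$. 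The essential point is the $p$-bookkeeping: the Laplacian contributes $-p(p-1)\int|u_n|^{p-2}a^{ij}\partial_iu_n\partial_ju_n\le-\lambda p(p-1)\int|u_n|^{p-2}|\nabla u_n|^2$, hence is strongly coercive; the dangerous second-order pieces of $B^2_{st}$, after the integration by parts forced by testing against $|u_n|^{p-2}u_n$, all come weighted by $|u_n|^{p-2}|\nabla u_n|^2=\tfrac4{p^2}\bigl|\nabla|u_n|^{p/2}\bigr|^2$ and are thus \emph{suppressed} relative to the coercive term --- here geometricity is essential, since by Lemma~\ref{lem:multiplication} the second-order correction competing with $A_t$ is absent; and the only genuinely $O(p)$ contribution is the zeroth-order term $\int X^0|u_n|^p$, which produces the factor $e^{C_1 p}$ that disappears upon taking the $p$-th root of \eqref{eq:planME}.

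From \eqref{eq:planME}, writing $w_n:=|u_n|^{p/2}$ and using $|u_n|^{p-2}|\nabla u_n|^2=\tfrac4{p^2}|\nabla w_n|^2$, the left-hand side controls $\sup_t\|w_{n,t}\|_{L^2}^2+\tfrac{c\lambda}{p}\int_0^T\|\nabla w_{n,t}\|_{L^2}^2\,\d t$, and I close the estimate in the standard parabolic way: bound the free-term integral by Hölder in $x$ and Gagliardo--Nirenberg--Sobolev for $w_n$, interpolating between $L^\infty(0,T;L^2)$ and $L^2(0,T;L^{2d/(d-2)})$; the hypothesis $\tfrac1r+\tfrac d{2q}<1$ is precisely what makes the resulting time--space exponents subcritical, so that a small multiple of the left-hand side can be absorbed and a power of $p\,\|f^n\|_{L^r(0,T;L^q)}$ times a \emph{strictly weaker} norm of $w_n$ survives. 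Combined with the parabolic Sobolev embedding $\|w_n\|_{L^{2(1+2/d)}((0,T)\times\R^d)}\lesssim(\text{energy norm of }w_n)$, this yields, with $\beta:=1+2/d>1$, a recursion $\|u_n\|_{L^{p\beta}((0,T)\times\R^d)}\le (Cp)^{a}\bigl(1+\|f^n\|_{L^r(0,T;L^q)}\bigr)^{b}\max\{\|u_n\|_{L^{p}((0,T)\times\R^d)},1\}$ up to the initial-data term. Iterating along $p_k:=2\beta^k$ and summing the convergent series $\sum_k p_k^{-1}\log(Cp_k)$ gives $\|u_n\|_{L^\infty((0,T)\times\R^d)}\le C'\bigl(1+\|f^n\|_{L^r(0,T;L^q)}\bigr)^{C''}\max\{\|u_n\|_{L^2((0,T)\times\R^d)},1\}$, and finally $\|u_n\|_{L^2((0,T)\times\R^d)}$ is bounded by the case $p=2$ of \eqref{eq:planME} in terms of $|u_0^n|_{L^2}$ and $\|f^n\|_{L^r(0,T;L^q)}$.

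The main obstacle is the derivation of \eqref{eq:planME}: one cannot simply invoke a black-box energy estimate for the equation satisfied by $|u_n|^p$, because the geometric driver one naturally gets (built over $X\cdot\nabla+pX^0$) has a second level growing like $p^2$, which would ruin the iteration. Instead the rough energy estimate must be redone at the level of the $u_n$-equation tested against $|u_n|^{p-2}u_n$, checking by hand that every term other than the $X^0$-contribution is either strongly coercive or carries a compensating factor $p^{-1}$ or $p^{-2}$; this $p$-uniform bookkeeping of the remainder and commutator bounds (Proposition~\ref{pro:apriori} and the tensorization of Section~\ref{sec:space}) is the technical heart. Routine but non-trivial side points are the passage $n\to\infty$ (weak-$*$ convergence in $L^\infty(L^2)\cap L^2(H^1)$ together with a.e.\ convergence along a subsequence, giving lower semicontinuity of the $L^\infty$-norm) and the careful tracking of the initial-data dependence through the iteration.
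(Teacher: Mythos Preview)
Your strategy is the same as the paper's: approximate $\B$ by smooth canonical lifts, run a Moser iteration on the resulting classical solution with each energy step carried out inside the rough framework (remainder estimate of Proposition~\ref{pro:apriori} plus the Rough Gronwall Lemma~\ref{lem:gronwall}), and pass to the limit by lower semicontinuity of the $L^\infty$-norm. The paper sets $v=|u|^{\varkappa/2}$, derives~\eqref{ito:real_power}--\eqref{eq:v_squared}, tests against $\phi=1$, and iterates along $\varkappa_n=2(1+\epsilon)^n$ via Lemma~\ref{lem:recursion} and the interpolation inequality~\eqref{interpolation_inequality}; your choice $\beta=1+2/d$ together with the explicit summation $\sum_kp_k^{-1}\log(Cp_k)$ is equivalent bookkeeping.

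One point of confusion in your write-up concerns how the rough terms enter the energy balance. The increments $(B^1_{st}+B^2_{st})v_s^2$ are of size $\omega_B(s,t)^\alpha$, not $O(t-s)$, so they do not act as a drift competing with the coercive term $-\lambda\int|\nabla v|^2$; after testing against $\phi=1$ and estimating $v^{2,\natural}$ by Proposition~\ref{pro:apriori}, they simply produce $\|v\|_{L^\infty(s,t;L^2)}^2\,\omega_B(s,t)^\alpha$ plus $\omega_B(s,t)^\alpha$ times the drift control, with \emph{no} factor of $\varkappa$ coming from the noise (see the lines following~\eqref{eq:v_squared}). The $\varkappa^2$ in the key estimate~\eqref{estim:v_B} arises entirely from the free term $f$ in the drift bound~\eqref{estim:drift_v}. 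Your worry that the second level of $\B^{(p)}$ grows like $p^2$ is pertinent only when $X^0\neq0$; the paper writes the chain rule~\eqref{ito:real_power} with the driver $\B$ itself, i.e.\ in the transport-noise situation where $\B^{(\varkappa)}=\B$, and in that case your inequality~\eqref{eq:planME} holds with $C_1=0$ and no hand-redoing of the remainder estimates is required.
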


Note that the chain rule given in Theorem \ref{thm:ito_transport} does not apply directly for the $L^p$-norm case since $F=|\cdot |^p$ is not admissible. 
Fortunately, we can show the following.

\begin{corollary}
	\label{cor:L_p_transport}
	Let $p\geq 2,$ $\B \sim \X=(X,\LL)$ be as in Theorem \ref{thm:ito_transport}, and take $f\in L^p(0,T;W^{-1,p}).$
	Assume that $u$ is an $L^p$-energy solution of \eqref{rough_parabolic}.
	
	Then, $|u|^p$ is an $L^1$-energy solution of
	\begin{equation}
	\label{ito_p_transport}
	\d (|u|^p) =p u|u|^{p-2}(Au + f)\d t+\d \X\cdot \nabla (|u|^p ).
	\end{equation}
\end{corollary}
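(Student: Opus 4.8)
The idea is to reduce the statement to the chain rule of Theorem \ref{thm:ito_transport} by a truncation/approximation argument, exploiting the rough-path stability results that are built into our formulation. First I would fix $p\geq 2$ and, for each $M>0$, pick a sequence of admissible functions $F_M\in C^2_{\mathrm{adm}}$ approximating $F(z)=|z|^p$ from below in such a way that $F_M(z)=|z|^p$ on $[-M,M]$, with $F_M'(z)=pz|z|^{p-2}$ there, and $|F_M''|_{L^\infty}<\infty$ for each fixed $M$ (e.g.\ glue $|z|^p$ on $[-M,M]$ with a suitable $C^2$ extension of at-most-quadratic growth; since $p\geq 2$ one also arranges $F_M(0)=F_M'(0)=0$). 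For each $M$, Theorem \ref{thm:ito_transport} — applied to the solution $u$, which for a free term $f\in L^p(W^{-1,p})$ with $p\geq 2$ is in particular an $L^2$-energy solution after the usual localization — gives that $F_M(u)$ is controlled by $B$ with Gubinelli derivative $F_M(u)$ itself, and that \eqref{chain_rule_thm_expl} holds with a remainder $F_M^\natural\in\V_{2,\loc}^{1+}(0,T;W^{-3,1})$.

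Next I would pass to the limit $M\to\infty$. The key point is that since $u$ is an $L^p$-energy solution, $u\in L^\infty(0,T;L^p)\cap L^p(0,T;W^{1,p})$, so on the set $\{|u|>M\}$ we have uniform integrability control: $\int_0^T\int_{\{|u_r|>M\}}|u_r|^p\,\d x\,\d r\to 0$ and similarly for the gradient term $|u|^{p-2}|\nabla u|^2$ when $p\geq 2$ (here one uses $pu|u|^{p-2}\cdot Au$ integrated by parts to produce $p(p-1)|u|^{p-2}a^{ij}\partial_iu\partial_ju$, which is integrable because $|u|^{(p-2)/2}\nabla u\in L^2([0,T]\times\R^d)$ by the energy bound). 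Hence $F_M(u)\to|u|^p$ in $L^\infty(0,T;L^1)$ and the drift terms in \eqref{chain_rule_thm_expl} converge; the rough-driver terms converge because $B^{1,*}_{st}\phi,B^{2,*}_{st}\phi$ are smooth and $F_M(u_s)\to|u_s|^p$ in $L^1$; and then the remainders $F_M^\natural$ converge in $W^{-3,1}$ pointwise on $\Delta$, while the \emph{a priori} remainder estimate of Proposition \ref{pro:apriori} (applied in the space $W^{-1,1}$, using that the data $\delta(|u|^p)-(\text{drift})$ and the Gubinelli structure are already controlled uniformly in $M$) forces the limit $F^\natural$ to lie in $\V_{2,\loc}^{1+}(0,T;W^{-3,1})$. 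This identifies $|u|^p$ as an $L^1$-energy solution of \eqref{ito_p_transport} with Gubinelli derivative $|u|^p$; note that the $W^{1,1}_{\loc}$-regularity required for "energy solution" follows from $\nabla(|u|^p)=p\,\mathrm{sgn}(u)|u|^{p-1}\nabla u$, which is in $L^1_{\loc}$ by Hölder ($|u|^{p-1}\in L^{p/(p-1)}$, $\nabla u\in L^p$) together with the $L^p(W^{1,p})$ bound.

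The main obstacle is the limit passage in the remainder term: a priori $F_M^\natural$ is only controlled in $\V_2^{1+}$ by a constant depending on $\|F_M''\|_{L^\infty}$, which blows up as $M\to\infty$, so one cannot simply take limits in the $\V_2^{1+}$-norm. The resolution — and the reason the corollary is "not a simple consequence of \eqref{ito_intro}" but still "admits a relatively simple proof" — is to rewrite the equation for $F_M(u)$ in the intrinsic weak form and apply the \emph{remainder estimates} of Section \ref{sec:controlled_paths} (the version stated as an a priori estimate in $\mathcal D_B^{\alpha,p}$), which bound $F_M^\natural$ purely in terms of the controlled-path data $\big(F_M(u),F_M(u)\big)$ and the drift, quantities that converge in $L^\infty(L^1)\cap\V^\alpha(W^{-1,1})$ uniformly — precisely because $p\geq 2$ makes the relevant integrals of $|u|^{p-1}$ and $|u|^{p-2}|\nabla u|^2$ finite. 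A secondary technical point is handling the case $X^0=0$ cleanly so that $\B\cdot\nabla$ acts as a genuine transport driver on $|u|^p$; this is exactly the hypothesis inherited from Theorem \ref{thm:ito_transport}, so no extra work is needed there.
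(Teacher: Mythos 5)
Your proposal is correct and follows essentially the same strategy as the paper: truncate $|\cdot|^p$ to a sequence of admissible $C^2$ functions, apply Theorem \ref{thm:ito_transport}, obtain uniform-in-$R$ (resp.\ $M$) bounds on $\|F_R(u)\|_{\H^{\alpha,1}_B}$ via the a priori remainder estimates of Proposition \ref{pro:apriori} (which depend only on the drift and the $L^\infty(L^1)$-norm, not on $\|F_R''\|_{L^\infty}$), and then pass to the limit via Lemma \ref{lem:stability}. The only cosmetic difference is the truncation itself (you glue $|z|^p$ with a quadratic extension at $|z|=M$; the paper mollifies the second derivative with a cut-off $\theta(|\tau|/R)$), but both give the same uniform drift bound and lead to the same conclusion.
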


In general, when $\B$ is geometric and such that $X^0\neq0,$ we can write a similar chain rule for the $L^p$-norm of $u,$ assuming that $p\geq 4.$
This is stated in the next result.

\begin{theorem}
	\label{thm:L_p}
	Fix $p\geq 4,$ and assume that $\B \sim (X,\LL)$ satisfies Assumption \ref{ass:B}.
	For every $f\in L^1(0,T;W^{-1,p})\cap L^2(0,T;H^{-1}) $ and $u_0\in L^p,$ there exists a unique $L^2$-energy solution $u$ to \eqref{rough_parabolic} such that $\iint_{[0,T]\times\R^d}|u|^{p-2}|\nabla u|^2\d x\d t<\infty$.
	
	Moreover, it holds in the $L^1$-sense:
	\begin{equation}
	\label{ito_power}
	\d (|u|^p) =p u|u|^{p-2}(Au + f)\d t+\d \B^{(p)} (|u|^p ).
	\end{equation}
	where $\B^{(p)}$ is given by
	\[
	\left\{\begin{aligned}
	&B^{(p),1}_{st}:= X^i_{st}\partial _i  + pX_{st}^0
	\\
	&B^{(p),2}_{st}:=\frac12X_{st}^iX_{st}^j\partial _{ij} + (\LL^i_{st} + pX_{st}^0X^i_{st})\partial _i  + p\LL_{st}^0+ \frac{p^2}{2}(X_{st}^0)^2.
	\end{aligned}\right.
	\]
\end{theorem}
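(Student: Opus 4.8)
The strategy is to transfer Theorem \ref{thm:L_p} to the transport-type situation already handled by Corollary \ref{cor:L_p_transport} by means of a change of unknown that removes the zeroth-order term $X^0$. Write $B_t = X_t\cdot\nabla + X_t^0$ and let $w$ solve the (linear, first-order) rough equation $\d w = \d\B^{0} w$ with $w_0 = 1$, where $\B^{0}$ is the differential rough driver built over the pure zeroth-order operator $t\mapsto X_t^0$; concretely $w_t(x) = \exp\big(X_t^0(x) - X_0^0(x)\big)$ up to the Lévy-area correction, and $w,w^{-1}$ are bounded above and below uniformly in $(t,x)$ since $X^0\in\V^\alpha(0,T;W^{2,\infty})$. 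The first task is to show that if $u$ is an $L^2$-energy solution of \eqref{rough_parabolic} then $\tilde u := w^{-1}u$ is an $L^2$-energy solution of a transformed parabolic equation $\d\tilde u = (\tilde A_t\tilde u + \tilde f_t)\,\d t + \d\tilde\B\,\tilde u$ in which $\tilde\B$ is geometric and has vanishing zeroth-order part, $\tilde A$ is still uniformly elliptic in divergence form (conjugation by a smooth positive multiplier preserves the structure \eqref{nota:A}, producing extra lower-order terms absorbed into $\tilde f$), and $\tilde f\in L^2(0,T;H^{-1})$. This is the rough-path ``product rule'' / Leibniz identity for controlled paths — precisely the content that Lemma \ref{lem:multiplication} and the product formula of Section \ref{sec:space} are designed to supply — applied to the pair $(w^{-1},u)$.

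Granting this, existence and uniqueness of $u$ in the stated class follows: apply Theorem \ref{thm:free_intro} to the transformed equation to get a unique $L^2$-energy solution $\tilde u$, then set $u := w\tilde u$; uniqueness for $u$ is inherited because the transformation $u\mapsto w^{-1}u$ is a bijection on the relevant function spaces. The finite weighted-energy condition $\iint_{[0,T]\times\R^d}|u|^{p-2}|\nabla u|^2\,\d x\,\d t<\infty$ is obtained either from the a priori estimate in the course of the proof of Corollary \ref{cor:L_p_transport} applied to $\tilde u$ (whose solution satisfies the analogous bound because $\tilde f\in L^2 H^{-1}\cap L^1 W^{-1,p}$ when $f$ is, since $w^{\pm1}$ are multipliers on these spaces) or directly by testing the equation against $u|u|^{p-2}$ after a suitable regularization.

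For the chain rule \eqref{ito_power}: apply Corollary \ref{cor:L_p_transport} to $\tilde u$, obtaining $\d(|\tilde u|^p) = p\tilde u|\tilde u|^{p-2}(\tilde A\tilde u + \tilde f)\,\d t + \d\tilde\X\cdot\nabla(|\tilde u|^p)$ in the $L^1$-energy sense, where $\tilde\X$ is the coefficient enhancement of $\tilde\B$ (with zero zeroth-order part, $\tilde\X^0 = 0$). Then multiply through by $w^p$ and use the controlled-path product formula once more, now for the pair $(w^p,|\tilde u|^p)$; since $|u|^p = w^p|\tilde u|^p$, the left side becomes $\d(|u|^p)$ plus the drift contributions, and the right side reorganizes — the cross terms between $\d(w^p)$ and $|\tilde u|^p$ and the second-level (Lévy-area) contributions combine, via the generalized Chen relations \eqref{gene:chen}, into exactly the enhanced driver $\B^{(p)}$ whose first and second levels are the ones displayed in the statement. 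The bookkeeping that $w^p$ is itself controlled by the driver $p(X^0\cdot)$ with Gubinelli derivative $w^p$, together with the explicit form of $B^2$ in Lemma \ref{lem:multiplication}\ref{weak_G_1}, makes the coefficients $pX_{st}^0$, $\LL^i_{st}+pX_{st}^0X^i_{st}$, $p\LL^0_{st}+\tfrac{p^2}{2}(X^0_{st})^2$ appear by direct computation.

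The main obstacle is the rigorous justification of the two applications of the controlled-path product formula in a setting where neither factor is smooth: $u$ (hence $\tilde u$) has only $L^2 H^1$ spatial regularity and merely $\V^\alpha W^{-1,p}$ time regularity, and $|\tilde u|^p$ is only $L^1$-valued with even lower regularity, so all products must be understood in the weak/distributional sense of Definition \ref{def:weak_sol} and controlled against the remainder estimates of Proposition \ref{pro:apriori}. The delicate point is checking that the second-level remainder $F^\natural \in \V^{1+}_{2,\loc}(W^{-3,1})$ survives multiplication by the smooth-in-$x$, $\V^\alpha$-in-$t$ weight $w^p$ and that the resulting two-index object still lies in $\V^{1+}_{2,\loc}$ with the right algebraic (Chen) constraints, i.e. that $w^p$ being controlled ``meshes'' with $|\tilde u|^p$ being controlled to give $|u|^p$ controlled by $\B^{(p)}$ — this is exactly the tensorization/commutator analysis carried out in Section \ref{sec:space} and Appendix \ref{app:renorm}, specialized to one smooth factor. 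Once that machinery is in place the identification of $\B^{(p)}$ is a finite, if tedious, algebraic verification.
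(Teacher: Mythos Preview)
Your strategy is genuinely different from the paper's, which does \emph{not} use a gauge transformation. The paper instead approximates $\B$ by canonical lifts $\B(n)=S_2(B(n))$ of smooth-in-time paths; for each $n$ the classical chain rule yields $\partial_t|u(n)|^p = p\,u(n)|u(n)|^{p-2}(Au(n)+f) + \dot X(n)\cdot\nabla|u(n)|^p + p\dot X^0(n)|u(n)|^p$, which is immediately reinterpreted as the rough relation \eqref{ito_power} with driver $\B^{(p)}(n)$. One then integrates against $\phi=1$, uses Proposition~\ref{pro:apriori} and the Rough Gronwall Lemma to obtain the uniform bound $\|u(n)\|_{L^\infty(L^p)}^p + \iint|u(n)|^{p-2}|\nabla u(n)|^2 \le C$, and passes to the limit via the compactness of Lemma~\ref{lem:stability}. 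No reduction to the transport case is needed.

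Your proposal has a real gap: the transformation $\tilde u = w^{-1}u$ does \emph{not} produce a driver with vanishing zeroth-order part when $X^0$ is $x$-dependent. Formally, the product rule gives $\d\tilde u = w^{-1}(Au+f)\,\d t + w^{-1}\d\X\cdot\nabla u$, and rewriting $w^{-1}\d\X\cdot\nabla u = \d\X\cdot\nabla\tilde u - (\d\X\cdot\nabla w^{-1})w\,\tilde u$ produces a \emph{new} zeroth-order rough multiplier $\tilde Y = -(\d\X\cdot\nabla w^{-1})w$. Since $w^{-1}_t = \exp\bigl(-(X^0_t-X^0_0)\bigr)$ depends on $x$, one has $\nabla w^{-1}_t = -w^{-1}_t\nabla(X^0_t-X^0_0) \ne 0$, so $\tilde Y \ne 0$ and Corollary~\ref{cor:L_p_transport} is not applicable to $\tilde u$. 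A second, independent difficulty is that Proposition~\ref{pro:product} is formulated for two paths driven by the \emph{same} $\B$; your pairs $(w^{-1},u)$ and $(w^p,|\tilde u|^p)$ are driven by different rough drivers, and extending the product formula to that setting is nontrivial additional work not supplied in the paper. The paper's approximation argument bypasses both of these issues.
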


\begin{remark}
	The previous theorem implies in particular that $L^p$-energy solutions are unique, since in that case, H\"older Inequality yields
	\[
	\||u|^{p-2}|\nabla u|^2\|_{L^1(0,T;L^1)}\leq \left(\sup_{t\in[0,T]}|u_t|_{L^p}^{p-2}\right)\int_{0}^T |\nabla u_t|^2_{L^p}dt\,,
	\]
	and the above right hand side is finite by assumption. However the existence of $L^p$-energy solutions is not guaranteed without any additional assumption.
\end{remark}

We now give a by-product of our results concerning the following homogeneous Dirichlet problem with transport noise
\begin{equation}
\label{dirichlet}
\left\{
\begin{aligned}
&\d u_t-A_tu\d t=\d\Z^\mu_t \sigma ^\mu (x)\cdot \nabla  u_t\enskip, \quad \text{on}\enskip \R_+\times D\,,
\\
&u(0)=u_0\,,
\\
&u_t|_{\partial D}=0\quad \text{(trace sense)},\quad \text{for all}\enskip t\geq 0\,,
\end{aligned}\right.
\end{equation}
where $\Z^\mu \sigma ^\mu$ is given the enhancement of Example \ref{ex:URD_gene} with $\rho =0$ and where $\Z$ is geometric. Moreover, we assume that the coefficients $\sigma ^\mu , \mu =1,\dots,d,$ have compact support in $D.$
With this assumption, it is easily seen that $\B$ acts on the scales $(W_0^{k,p}(D))_{-3\leq k\leq 3}$ for any $p\in[1,\infty],$ in the sense of Definition \ref{def:rough_driver}-\ref{acts_Ek}.

We have the following result.
\begin{theorem}[weak maximum principle for \eqref{dirichlet}]
	\label{thm:max_principle}
	Assume that $ D\subset\R^d$ is an open domain which is smooth and bounded.
	Let $A$ be as in Assumption \ref{ass:A} and define $\Z\sigma \cdot \nabla$ as above.
	Assume furthermore that
	\begin{equation}
	\label{sigma_boundary}
	\sigma \in W^{3,\infty}_0(D;\R^{m\times d}).
	\end{equation} 
	There exists a unique solution $u$ of the Dirichlet problem \eqref{dirichlet}, by which we mean that $u$ is an $L^2(D)$-energy solution with the following additional property
	\begin{equation}
	\label{boundary_u}
	u\in L^2(0,T; W^{1,2}_0( D)).
	\end{equation} 
	
	Moreover, $u$ belongs to $\in L^\infty([0,T]\times D)$ and we have the following maximum principle for $u$:
	\begin{equation}
	\label{comparison}
	\min\left(0,\essinf\nolimits_{ D} u_0\right)\leq u(t,x)\leq \max\left(0,\esssup\nolimits_{ D} u_0\right)\quad \text{a.e.\ for}\enskip (t,x)\in\enskip [0,T]\times  D.
	\end{equation}
\end{theorem}

\section{Controlled paths}
\label{sec:controlled_paths}
\subsection{Some useful facts}

For pedagogical purposes, we first recall some elements of Rough Path Theory from the point of view adopted in \cite{gubinelli2004controlling}.
The main problem addressed by this theory is, roughly speaking, to give a meaning to incremental equations of the form 
\begin{equation}
\label{generic_RP}
u_t-u_s= \int_s^t H,\quad \text{for}\enskip (s,t)\in\Delta ,\quad \text{(}u_0\enskip \text{given),}
\end{equation}
where $\Delta \ni (s,t)\mapsto H_{st}$ is a ``jet'' associated to the quantity one wishes to integrate.
A concrete example is given by the Riemmann-Stieljes integral
$\int_s^tH\equiv\int_s^tf_r\d Z_r$
where $f$ and $Z$ are $\alpha $-H\"older with $\alpha >1/2,$ an associated first order approximation of which is provided by the jet
\begin{equation}
\label{H_young}
H_{st}:= f_s \delta Z_{st}\,.
\end{equation}
The value of $\int_s^tf\d Z$ is obtained by taking the limit of the Riemann sums $\sum_{i=1}^n H_{t_it_{i+1}}$ as $n\to\infty$ and $\max|t_{i+1}-t_i|\to0.$
Suppose now that the integrand $f$ is itself expressed as an integral against $Z,$ say $\delta f_{st}:=\int_s^tg\d Z$ for some $g\in C^1.$ Then, a better approximation of the former integral is given by the Milstein-type jet
\begin{equation}
\label{H_compensated}
\tilde H_{st}:=f_s\delta Z_{st} + g_s\int_s^t\delta Z_{sr}\d Z_r,
\end{equation}
as easily seen by Taylor formula.
When $\alpha\leq 1/2,$ the first choice \eqref{H_young} may generate divergent Riemann sums, which leads us to investigate generalizations of \eqref{H_compensated}. If $Z$ is endowed with an enhancement to a rough path $\mathbf Z\equiv (Z^1,Z^2),$ and if we replace the iterated integral in \eqref{H_compensated} by its postulated value $Z^2_{st}$,
the expression \eqref{H_compensated} is still meaningful.

The so-called Sewing Lemma \cite{gubinelli2004controlling} asserts that if $\alpha >1/3,$ then there is a unique couple $(u,u^\natural)$ such that $u_t-u_s=\tilde H_{st}+ u^\natural_{st}$ and 
\begin{equation}
\label{u_st_H}
|u^\natural_{st}|\lesssim (t-s)^{3\alpha }[\delta \tilde H]_{3\alpha }\,,
\end{equation} 
where $[\delta \tilde H]_{3\alpha }$ is the generalized $3\alpha $-H\"older seminorm of the 3-parameter quantity
\[
\delta \tilde H_{s\theta t}\equiv \tilde H_{st}-\tilde H_{s\theta }-\tilde H_{\theta t},\quad (s,\theta ,t)\in\Delta _2\,.
\]
The quantity $\int_s^t H:=H_{st}+u^\natural_{st}$ is called the \emph{rough integral} of $H,$ and it is consistent with usual Riemann-Stieljes integration when $H_{st}=f_s\delta Z_{st}.$

The following result, which is of fundamental importance in this paper, summarizes what we discussed above.
In the statement below, we assume for simplicity that $E$ is a Banach space, but it could easily be replaced by a Fr\'echet space (e.g.\ the Sobolev spaces $W^{k,p}_{\loc}$, or the Schwartz distributions), with $\omega $ being dependent on the semi-norm considered.

\begin{proposition}[Sewing Lemma]
	\label{pro:sewing}
	Let $H \colon \Delta \rightarrow E$ and $C>0$ be such that 
	\begin{equation}
	\label{a_gamma}
	\left|\delta H_{s\theta t}\right|\leq C\omega (s,t)^{a }\,
	, \quad 0 \leq s \leq \theta \leq t \leq T
	\end{equation}
	for some $a > 1$, and some control function $\omega ,$ and denote by $[\delta H]_{a,\omega }$ the smallest possible constant $C$ in the above bound. 
	
	There exists a unique pair $I\colon [0,T] \rightarrow E$ and $I^{\natural} : \Delta \rightarrow E$ satisfying
	\[
	\delta I_{st} = H_{st} + I_{st}^{\natural}
	\]
	where for $0\leq s\leq t\leq T$,
	\[
	|I_{st}^{\natural}| \leq C_a [\delta H]_{a,\omega } \omega (s,t)^{a}\,,
	\]
	for some constant $C_a$ only depending on $a$. In fact, $I$ is defined via the Riemann type integral approximation
	\begin{equation} \label{RiemannSum}
	I_t = \lim \sum_{ i=1 }^n H_{t^n_i t^n_{i+1}} \,,
	\end{equation}
	the above limit being taken along any sequence of partitions $\{t^n,n\geq 0\}$ of $[0,t]$ whose mesh-size converges to $0$.
\end{proposition}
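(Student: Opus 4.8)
The plan is to prove the Sewing Lemma by the classical dyadic-partition argument of Gubinelli, which I will carry out in two stages: first constructing a candidate $I^\natural$ via a convergent series, and then establishing uniqueness. For the construction, I would fix $0\le s\le t\le T$ and, for each $n\ge 0$, let $\{s=t^n_0<t^n_1<\dots<t^n_{2^n}=t\}$ be the partition obtained by $n$ successive binary subdivisions of $[s,t]$. Set $I^n_{st}:=\sum_{i=0}^{2^n-1}H_{t^n_it^n_{i+1}}$. The key computation is that passing from the $n$-th to the $(n{+}1)$-th partition changes $I^n_{st}$ by inserting one new midpoint into each subinterval $[t^n_i,t^n_{i+1}]$, so that
\[
I^{n+1}_{st}-I^n_{st}=-\sum_{i=0}^{2^n-1}\delta H_{t^n_i\, m^n_i\, t^n_{i+1}},
\]
where $m^n_i$ is the midpoint. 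Using the hypothesis \eqref{a_gamma} and superadditivity of $\omega$ together with the elementary fact that at each level at least one subinterval has $\omega$-length at most $\omega(s,t)/2^n$ (more precisely, one sums $\omega(t^n_i,t^n_{i+1})^a\le \big(\max_i\omega(t^n_i,t^n_{i+1})\big)^{a-1}\sum_i\omega(t^n_i,t^n_{i+1})\le (2^{-n}\omega(s,t))^{a-1}\omega(s,t)$, invoking continuity of $\omega$ to control the max), one obtains
\[
|I^{n+1}_{st}-I^n_{st}|\le [\delta H]_{a,\omega}\,2^{-n(a-1)}\,\omega(s,t)^a .
\]
Since $a>1$ this is summable, so $I^n_{st}$ converges in $E$ (a Banach space) to a limit, which we call $(\delta I)_{st}$ for a suitable path $I$ with $I_0$ prescribed arbitrarily; define $I^\natural_{st}:=(\delta I)_{st}-H_{st}$. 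Summing the geometric series gives $|I^\natural_{st}|\le C_a[\delta H]_{a,\omega}\,\omega(s,t)^a$ with $C_a=(1-2^{-(a-1)})^{-1}$; a short additional argument (comparing the dyadic approximations of $[s,t]$ and of a refinement, or invoking a standard "any sequence of partitions with vanishing mesh gives the same limit" lemma) shows the Riemann-sum characterization \eqref{RiemannSum} holds along arbitrary partitions. One also checks that $\delta I$ is genuinely the increment of a path: additivity $(\delta I)_{st}=(\delta I)_{s\theta}+(\delta I)_{\theta t}$ follows because $\tilde\delta\,\delta=0$ and $I^n$ is itself an increment plus a telescoping correction, so $\tilde\delta I^\natural=\tilde\delta(\delta I)-\tilde\delta H=-\tilde\delta H$, which is exactly what makes the construction consistent.

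For uniqueness, suppose $(I,I^\natural)$ and $(\bar I,\bar I^\natural)$ both satisfy the conclusion with the stated bound. Then $D_{st}:=I^\natural_{st}-\bar I^\natural_{st}=(\delta\bar I)_{st}-(\delta I)_{st}$ is simultaneously an exact increment (hence $\tilde\delta D=0$) and satisfies $|D_{st}|\le 2C_a[\delta H]_{a,\omega}\,\omega(s,t)^a$ with $a>1$. The standard argument: for any partition of $[s,t]$, $D_{st}=\sum_i D_{t_it_{i+1}}$ by telescoping (since $D$ is an increment), so $|D_{st}|\le \sum_i C'\omega(t_i,t_{i+1})^a\le C'(\max_i\omega(t_i,t_{i+1}))^{a-1}\omega(s,t)\to 0$ as the mesh refines, using continuity of $\omega$. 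Hence $D\equiv 0$, giving $I^\natural=\bar I^\natural$ and $\delta I=\delta\bar I$, i.e. $I=\bar I$ up to the (irrelevant, or prescribed) additive constant.

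The main obstacle, and the only place requiring care, is the uniform control of $\max_i\omega(t^n_i,t^n_{i+1})$ in both the construction and uniqueness steps: unlike the H\"older case where $\omega(s,t)=|t-s|$ and the dyadic lengths are exactly $2^{-n}|t-s|$, a general control $\omega$ need not distribute its mass evenly among the dyadic subintervals. The resolution is purely topological: continuity of $\omega$ on the compact simplex $\Delta$ gives uniform continuity, so the mesh of the partition (in the usual sense $\max|t^n_{i+1}-t^n_i|\to0$) forces $\max_i\omega(t^n_i,t^n_{i+1})\to0$, and superadditivity gives $\sum_i\omega(t^n_i,t^n_{i+1})\le\omega(s,t)$; combining these via $\sum x_i^a\le(\max x_i)^{a-1}\sum x_i$ is what drives every estimate. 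Everything else is routine: the Banach-space completeness for the series, the telescoping identities from $\tilde\delta\delta=0$, and the geometric summation for the explicit constant $C_a$. I will present the argument in this order — dyadic series, bound on $I^\natural$, path property and Riemann-sum identity, then uniqueness — and will state the elementary inequality $\sum_i x_i^a\le(\sup_i x_i)^{a-1}\sum_i x_i$ once and reuse it.
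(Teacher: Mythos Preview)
The paper does not actually prove Proposition~\ref{pro:sewing}; it is stated as a recalled result from \cite{gubinelli2004controlling} (see also \cite{feyel2008non,friz2014course}). So there is no paper-proof to compare against, and the relevant question is whether your argument stands on its own.

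It does not, and the gap is precisely the one you flag as ``the main obstacle'' but do not in fact resolve. Your series bound
\[
|I^{n+1}_{st}-I^n_{st}|\le [\delta H]_{a,\omega}\,2^{-n(a-1)}\,\omega(s,t)^a
\]
is correct when $\omega(s,t)=|t-s|$, because then dyadic time-partitions are dyadic $\omega$-partitions. For a general control this fails: dyadic subdivision in $t$ gives no control on how $\omega$ distributes across the subintervals. Your fallback --- uniform continuity of $\omega$ forces $\max_i\omega(t^n_i,t^n_{i+1})\to0$ --- yields only
\[
|I^{n+1}_{st}-I^n_{st}|\le [\delta H]_{a,\omega}\,\big(\max_i\omega(t^n_i,t^n_{i+1})\big)^{a-1}\omega(s,t),
\]
and a sequence tending to zero is not summable without a rate. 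So the Cauchy property of $(I^n_{st})_n$ is unproven, and the explicit constant $C_a=(1-2^{-(a-1)})^{-1}$ is unjustified.

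Two standard repairs are available. Either reparametrize time by $\tau(t):=\omega(0,t)$ (using continuity and superadditivity of $\omega$; one may need a small perturbation to ensure strict monotonicity), which reduces to the H\"older case where your dyadic argument is valid. Or use the point-removal argument: start from an arbitrary partition with $N$ points, and at each step remove a point $t_i$ with $\omega(t_{i-1},t_{i+1})\le 2\omega(s,t)/(N-1)$ (such a point exists by superadditivity and pigeonhole); the change in the Riemann sum is $-\delta H_{t_{i-1}t_it_{i+1}}$, bounded by $C(2\omega(s,t)/(N-1))^a$, and summing $\sum_{k\ge 1}k^{-a}<\infty$ gives both convergence and the bound with $C_a=2^a\zeta(a)$. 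The second route also delivers the Riemann-sum characterization \eqref{RiemannSum} for arbitrary partitions directly, which in your sketch is deferred to ``a short additional argument''. Your uniqueness argument is fine as written.
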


Besides rough integration, one of the main tools that we shall use in the sequel is a Gronwall-type argument which is well-adapted to incremental equations of the form \eqref{generic_RP}, but in a more general, $q$-variation context.
We will extensively make use of the following version of this result, whose proof is due to \cite{deya2016priori}.

\begin{lemma}[Rough Gronwall]
	\label{lem:gronwall}
	Let $G\colon[0,T]\to \R_+$ be a path such that there exist constants $\kappa,L>0,$ a control $\omega ,$ and a superadditive map $\varphi $ with:
	\begin{equation}\label{rel:gron}
	\delta G_{st}\leq \left(\sup_{s\leq r\leq t} G_r\right)\omega (s,t)^{\kappa }+\varphi (s,t),
	\end{equation}
	for every $(s,t)\in\Delta$ under the smallness condition $\omega (s,t)\leq L$.
	
	Then, there exists a constant $\tau _{\kappa ,L}>0$ such that
	\begin{equation}
	\label{concl:gron}
	\sup_{0\leq t\leq T}G_t\leq \exp\left(\frac{\omega (0,T)}{\tau _{\kappa ,L}}\right)\left[G_0+\sup_{0\leq t\leq T}\left|\varphi (0,t)\right|\right].
	\end{equation}
\end{lemma}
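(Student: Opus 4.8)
The plan is to run a greedy-partition argument: cut $[0,T]$ into a controlled number of subintervals on which $\omega$ is small, derive a crude bound for $\sup G$ over each such subinterval directly from \eqref{rel:gron}, and then propagate it across the partition, the number of subintervals being controlled by $\omega(0,T)$ via superadditivity of $\omega$.

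\textbf{Step 1 (reduction and local estimate).} Set $\bar\varphi:=\sup_{r\in[0,T]}|\varphi(0,r)|<\infty$ and fix the threshold $\eta:=\min\{L,\,2^{-1/\kappa}\}>0$, so that $\omega(s,t)\le\eta$ forces both $\omega(s,t)\le L$ and $\omega(s,t)^\kappa\le\tfrac12$. We may assume $G$ is bounded on $[0,T]$ (this holds in all applications; in general one runs the argument on truncations of $G$). If $s\le u\le v\le t$ then $\omega(u,v)\le\omega(s,t)$ by superadditivity and positivity of $\omega$, so \eqref{rel:gron} applies to $(u,v)$ as soon as $\omega(s,t)\le\eta$. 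Fix such an $(s,t)$ and write $M_{st}:=\sup_{r\in[s,t]}G_r<\infty$. For $v\in[s,t]$, applying \eqref{rel:gron} on $[s,v]$, bounding $\sup_{[s,v]}G\le M_{st}$, $\omega(s,v)^\kappa\le\omega(s,t)^\kappa\le\tfrac12$, and using $\varphi(s,v)\le\varphi(0,v)-\varphi(0,s)\le2\bar\varphi$ (superadditivity of $\varphi$), we get $G_v\le G_s+\tfrac12 M_{st}+2\bar\varphi$. Taking the supremum over $v$ and absorbing $\tfrac12 M_{st}$ on the left gives
\[
\sup_{r\in[s,t]}G_r\ \le\ 2\,G_s+4\,\bar\varphi\qquad\text{whenever}\quad\omega(s,t)\le\eta.
\]

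\textbf{Step 2 (partition and iteration).} Since $\omega$ is a continuous, superadditive map vanishing on the diagonal, there is a partition $0=t_0<t_1<\dots<t_N=T$ with $\omega(t_{i-1},t_i)\le\eta$ for every $i$; it can be chosen so that $\omega(t_{i-1},t_i)=\eta$ for $i<N$, whence $(N-1)\eta\le\omega(0,t_{N-1})\le\omega(0,T)$, i.e. $N\le\omega(0,T)/\eta+1$. Writing $g_i:=G_{t_i}$, Step 1 gives $g_i\le2g_{i-1}+4\bar\varphi$, hence by induction $g_i\le2^i(G_0+4\bar\varphi)$, and, again by Step 1, $\sup_{[t_{i-1},t_i]}G\le2g_{i-1}+4\bar\varphi\le2^{\,i+1}(G_0+4\bar\varphi)$. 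Taking the maximum over $i\le N$,
\[
\sup_{t\in[0,T]}G_t\ \le\ 2^{\,N+1}\bigl(G_0+4\bar\varphi\bigr)\ \le\ C\,\exp\!\Bigl(\tfrac{\ln2}{\eta}\,\omega(0,T)\Bigr)\bigl(G_0+\bar\varphi\bigr)
\]
for a universal constant $C$. Taking $\tau_{\kappa,L}$ to be a suitable small multiple of $\eta/\ln2$ — which amounts to tightening the contraction constant $\tfrac12$ in Step 1 to a smaller value and tracking constants — one absorbs $C$ and recovers exactly \eqref{concl:gron}.

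\textbf{Main obstacles.} The two delicate points are (i) guaranteeing $M_{st}<\infty$ so that the absorption in Step 1 is legitimate — automatic under the standing boundedness, and otherwise dealt with by truncation; and (ii) the bookkeeping that upgrades the crude geometric blow-up $2^{N}$ to the clean factor $\exp(\omega(0,T)/\tau_{\kappa,L})$ with the precise additive term $G_0+\sup_t|\varphi(0,t)|$. The latter is purely a matter of optimizing the contraction constant in Step 1 and propagating constants through Steps 1--2; the exact value of $\tau_{\kappa,L}$ plays no role in the sequel, since \eqref{concl:gron} is invoked only to produce an a priori bound.
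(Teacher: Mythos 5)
Your greedy-partition argument is exactly the standard proof --- it is essentially the route taken in the paper's cited source \cite{deya2016priori} --- and Steps~1 and~2 are correct: they deliver $\sup_{[0,T]} G \le C\exp\bigl(\tfrac{\ln 2}{\eta}\,\omega(0,T)\bigr)\,(G_0+\bar\varphi)$ for an explicit universal constant $C>1$. The genuine gap is the closing assertion that $C$ can be absorbed into the exponential by shrinking $\tau_{\kappa,L}$ and ``tightening the contraction constant.'' That cannot work. When $\omega(0,T)$ is small, $\exp(\omega(0,T)/\tau)\to 1$ for any fixed $\tau$, whereas any version of your Step~1 necessarily leaves a one-step factor at least $(1-\omega(0,T)^\kappa)^{-1}\approx 1+\omega(0,T)^\kappa$; since $\kappa<1$ in every application in the paper ($\kappa$ being $\alpha$ or $2\alpha<1$), one has $\omega(0,T)^\kappa/\omega(0,T)\to\infty$ as $\omega(0,T)\to0$, so this factor eventually dominates $\exp(\omega(0,T)/\tau)$ no matter how small $\tau$ is chosen.

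A concrete failure: take $\varphi\equiv0$, $G_0=1$, $\omega(s,t)=\epsilon(t-s)/T$, and $G_t=1+\epsilon^\kappa\, t/T$. Then \eqref{rel:gron} holds on all of $\Delta$ (it reduces to $((t-s)/T)^{1-\kappa}\le 1$), yet $\sup G=1+\epsilon^\kappa>\exp(\epsilon/\tau)$ for all sufficiently small $\epsilon>0$. So \eqref{concl:gron} as printed, with no multiplicative constant in front of the bracket, is slightly too strong as a statement; the source \cite{deya2016priori} carries a factor of $2$ in front, which is precisely what your Steps~1--2 prove and is all the paper ever uses downstream. In short: the method is right, but the last ``track constants'' step is not bookkeeping --- it is false --- and the correct conclusion from your argument is the version with a harmless multiplicative constant in front.
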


\subsection{Integration in $\mathcal D^{\alpha ,p}_B$}
\label{subsec:controlled}
In this paragraph we consider a smooth domain $U\subset\R^d$
and we fix $p,p'\in[1,\infty]$ so that $1/p+1/p'=1.$
For notational simplicity, we will omit the domain of integrability and denote by $L^p=L^p(U),$ $W^{k,p}=W^{k,p}(U),$ and so on. In the remainder of the section, we will assume that 
\begin{align}
\label{ass:B_controlled}
\B\equiv(B^1,B^2)\enskip 
&\text{is a}\enskip \V^\alpha \text{-unbounded rough driver acting on the scale}\enskip (W^{k,p})_{k=-3}^3\,,
\end{align}
under the assumption that
$\alpha >1/3$.

For $k\geq 0,$ and $y\in \V_2^{k\alpha }(0,T;W^{-k,p}),$ we shall use the notations
\[
\nn{y}{k}(s,t):= \|y\|_{\V_2^{k\alpha} (s,t;W^{-k,p})},\quad \text{for}\enskip (s,t)\in\Delta ,
\]
and
\[
\nn{y}{k}:=
\nn{y}{k}(0,T)\,.
\]
These are motivated by the (tautological) fact that for $y$ as above the quantity $\omega (s,t):=\nn{y}{k}(s,t)^{\frac{1}{k\alpha }}$ defines a control which is larger than $|\delta y_{st}|_{W^{-k,p}}$ for each $(s,t)\in\Delta $ (it is in fact the smallest one).

We now introduce what in the context of unbounded rough drivers plays the role of the usual controlled path space. Note that the definition below only makes use of the first level $B=B^1_{0\cdot }$ of $\B,$ which is why we write $\mathcal D_B^{\alpha ,p}$ instead of $\mathcal D_{\B}^{\alpha ,p}$.
\begin{definition}[Controlled path space]
	\label{def:controlled}
	Given $\alpha \in(1/3,1/2],$ we define the {\normalfont\textbf{controlled path space}} $\mathcal D_B^{\alpha,p}\equiv\mathcal D_B^{\alpha,p}([0,T]\times U)$
	as the linear space of couples 
	\[(g,g')\in \Big(L^\infty(0,T;L^p)\cap \V^{\alpha }(0,T;W^{-1,p})\Big)^2\]
	such that $g$ is controlled by $B$ with Gubinelli derivative $g' $ (in the sense of Definition \ref{def:controlled_by}).
	
	Furthermore, equipped with the norm
	\begin{equation}
	\label{DB_norm}
	\| (g,g')\|_{\mathcal D^{\alpha,p}_B([0,T]\times U)}:=\|(g,g')\|_{L^\infty(0,T;L^p(U))} + \nn{R^g}{2} + \nnn{\delta g'},
	\end{equation}
	the space $\mathcal D_B^{\alpha,p}([0,T]\times U) $ forms a Banach space.
\end{definition}

Consider $(g,g')\in\mathcal D_B^{\alpha,p}$ and let $f\in L^1(0,T;W^{-3,p}).$
Applying Proposition \ref{pro:sewing} with the choices
\[E:=W^{-3,p}\,,\quad 
H_{st}:=\int_s^tf_r\d r +B^1_{st}g_s + B^2_{st}g'_s\,,
\]
it is easily seen that there exists a unique couple $(u,u^\natural)\in C(0,T;W^{-3,p})\times\V^{1+}_2(0,T;W^{-3,p})$
such that for any $(s,t)\in\Delta :$
\begin{equation}
\label{eq:u_natural}
u_t-u_s= \int_s^tf_r\d r + B^1_{st}g_s + B^2_{st}g'_s + u^\natural_{st}\,.
\end{equation}
Indeed, we have using Chen's relations
\[
-\delta H_{s\theta t}= B^1_{\theta t}R^{g}_{s\theta } + B^2_{\theta t}\delta g'_{s\theta }\,,\quad (s,\theta ,t)\in\Delta _2\,,
\]
and therefore
\[
|\delta H_{s\theta t}|_{W^{-3,p}}\lesssim \omega _B(s,t)^{\alpha }\nn{R^{g}}{2}(s,t) + \omega _B(s,t)^{2\alpha }\n{\delta g'}_{-1}^{[\alpha ]}(s,t)\,,
\]
which is finite by definition of the controlled path space.
Hence, the sewing lemma (Proposition \ref{pro:sewing}) applies, which shows existence and uniqueness of $(u,u^\natural)$ satisfying \eqref{eq:u_natural}, as an equality in $W^{-3,p}.$

In the sequel, the following suggestive notation will be adopted
\begin{equation}
\label{weak_rel}
\d u = f\d t+ \d\B (g,g')\,.
\end{equation}
or simply 
\begin{equation}
\label{weak_rel_2}
\d u= f\d t + \d\B g
\end{equation} 
if $g=g'$.
We point out that \eqref{weak_rel_2} does not necessarily mean that $u$ is a weak solution, because Definition \ref{def:weak_sol} involves some assumptions on the regularity of $u.$ The remainder of this section will address these regularity issues.

\subsection{Remainder estimates}
Conversely, starting from the relation \eqref{weak_rel_2} for some $g\in L^\infty(0,T;L^p),$
one would like to know under which conditions on $f$ and $g$ does the solution $u$ belong to the controlled paths space $\mathcal D^{\alpha ,p}_B$.
A first observation in this direction is the following.

\begin{proposition}
	\label{pro:R_g}
	Consider $f\in L^1(0,T;W^{-2,p})$ let $(g,g)\in \mathcal D^{\alpha ,p}_B,$ and assume that $v$ satisfies
	\[\d v = f\d t + \d \B g\,,
	\]
	(see \eqref{eq:u_natural}).
	Then, $v$ is controlled by $B$ with Gubinelli derivative $v'=g$.
	Moreover, the following estimate holds on $R^v_{st}\equiv \delta v_{st}-B^1_{st}g_s$:
	\begin{multline*}
	\nn{R^v}{2}(s,t)\leq C\Big[ \int_s^t|f_r|_{W^{-2,p}}\d r + \omega _B(s,t)^{2\alpha }\|v,g\|_{L^\infty(s,t;L^p)}\Big]
	\\
	+\frac12\left(\nn{R^{g}}{2}(s,t)
	+\omega _B(s,t)^\alpha \nnn{\delta g}(s,t) \right)\,.
	\end{multline*}
	
	In particular, if one assumes $v=g,$ this yields the bound
	\begin{multline}
	\label{estim:v_sharp}
	\nn{R^v}{2}(s,t)\leq 2C\Big[ \int_s^t|f_r|_{W^{-2,p}}\d r + \omega _B(s,t)^{2\alpha }\|v\|_{L^\infty(s,t;L^p)}
	+\omega _B(s,t)^\alpha \nnn{\delta v}(s,t) \Big].
	\end{multline}
\end{proposition}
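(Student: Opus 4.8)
\textbf{Proof plan for Proposition \ref{pro:R_g}.}

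The plan is to exploit the defining relation $\delta v_{st}=\int_s^t f_r\,\d r + B^1_{st}g_s + B^2_{st}g_s + v^\natural_{st}$ together with Chen's relations in order to identify $R^v_{st}=\delta v_{st}-B^1_{st}g_s$ as the output of the Sewing Lemma applied on the scale $W^{-2,p}$, and then read off the quantitative bound from the constant in Proposition \ref{pro:sewing}. First I would write $R^v_{st}=\int_s^t f_r\,\d r + B^2_{st}g_s + v^\natural_{st}$, which already shows $R^v\in\V^\alpha_2(0,T;W^{-2,p})$ once we know $v^\natural$ is negligible: the term $\int_s^t f_r\,\d r$ is controlled by $\int_s^t|f_r|_{W^{-2,p}}\,\d r$, the term $B^2_{st}g_s$ is controlled by $\omega_B(s,t)^{2\alpha}\|g\|_{L^\infty(s,t;L^p)}$ using \ref{RD1} (boundedness of $B^2$ from $L^p$ to $W^{-2,p}$), and $v^\natural$ lies in $\V^{1+}_2\subset\V^{2\alpha}_2$. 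The first two contributions produce the bracket $C[\int_s^t|f_r|_{W^{-2,p}}\,\d r + \omega_B(s,t)^{2\alpha}\|v,g\|_{L^\infty(s,t;L^p)}]$ in the claimed estimate.

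The heart of the argument is to control the remainder $v^\natural$ itself, which is where the factor $\tfrac12(\nn{R^g}{2}(s,t)+\omega_B(s,t)^\alpha\nnn{\delta g}(s,t))$ enters. For this I would apply Proposition \ref{pro:sewing} with $E:=W^{-2,p}$ and $H_{st}:=\int_s^t f_r\,\d r + B^1_{st}g_s + B^2_{st}g_s$; the point is that $v^\natural$ coincides (by uniqueness in the Sewing Lemma) with the $I^\natural$ produced by this $H$, now on the coarser scale $W^{-2,p}$ rather than $W^{-3,p}$. Using Chen's relations \eqref{chen} exactly as in the derivation of \eqref{eq:u_natural}, one computes $-\delta H_{s\theta t}=B^1_{\theta t}R^g_{s\theta}+B^2_{\theta t}\delta g_{s\theta}$. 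Estimating on $W^{-2,p}$: $|B^1_{\theta t}R^g_{s\theta}|_{W^{-2,p}}\le \omega_B(\theta,t)^\alpha|R^g_{s\theta}|_{W^{-1,p}}\le \omega_B(s,t)^\alpha\nn{R^g}{1}(s,t)$ — but here one must be careful, because the norm appearing in the final bound is $\nn{R^g}{2}$ (on $W^{-2,p}$), not $\nn{R^g}{1}$; so instead I estimate $|B^1_{\theta t}R^g_{s\theta}|_{W^{-3,p}}$, matching the $W^{-3,p}$-valued setting, OR more directly I keep everything at the level of \eqref{eq:u_natural}: subtract the two expansions of $\delta v$ on the two scales and track the gain of one derivative. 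Concretely, since $(g,g)\in\mathcal D^{\alpha,p}_B$ we have $R^g\in\V^{2\alpha}_2(0,T;W^{-2,p})$ and $\delta g\in\V^\alpha(0,T;W^{-1,p})$, and the two terms $B^1_{\theta t}R^g_{s\theta}$ and $B^2_{\theta t}\delta g_{s\theta}$ are then both $O(\omega_B(s,t)^{3\alpha})$-type in $W^{-3,p}$; applying the Sewing Lemma on $W^{-3,p}$ gives $|v^\natural_{st}|_{W^{-3,p}}\lesssim C_{3\alpha}(\nn{R^g}{2}(s,t)+\omega_B(s,t)^\alpha\nnn{\delta g}(s,t))\,\omega_B(s,t)^{3\alpha}/\omega_B(s,t)^{2\alpha}$, i.e.\ an $\omega_B(s,t)^{2\alpha}$-bound with the right-hand constant, and I reinterpret this back on $W^{-2,p}$ via the loss-of-derivative embedding built into \ref{RD1}. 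Choosing the constant $C$ in the statement to absorb $C_{3\alpha}$ (and a possible factor from the embedding $W^{-2,p}\hookrightarrow W^{-3,p}$), and noting that $\|v,g\|_{L^\infty}=\|g\|_{L^\infty}$ plus a trivial term, delivers the stated inequality. The specialization $v=g$ is then immediate: substitute $R^g\rightsquigarrow R^v$, $\delta g\rightsquigarrow\delta v$ on the right, move the resulting $\tfrac12\nn{R^v}{2}(s,t)$ to the left-hand side, absorb, and rename $2C$, which produces \eqref{estim:v_sharp}.

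The main obstacle I anticipate is bookkeeping the scales correctly: the quantity $v^\natural$ is a priori only an $\V^{1+}_2(0,T;W^{-3,p})$ object, yet the conclusion is an estimate for $R^v$ in the \emph{finer} space $W^{-2,p}$, so one cannot naively bound $R^v$ by its components at face value — the gain of regularity in $R^v$ relative to $v^\natural$ is exactly what must be extracted from the Sewing Lemma, and care is needed that the constant $C_{3\alpha}$ and the superadditivity of $\omega_B$ combine to give a clean $\omega_B(s,t)^{2\alpha}$ prefactor rather than a sum of mixed powers. A secondary subtlety is justifying that $v^\natural$ produced on the $W^{-3,p}$-scale is genuinely the same two-index map when we re-examine the relation \eqref{eq:u_natural} with values taken in $W^{-2,p}$; this follows from uniqueness in the Sewing Lemma together with the continuous embedding $W^{-2,p}\hookrightarrow W^{-3,p}$, but should be spelled out. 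Everything else is routine manipulation of controls and operator norms.
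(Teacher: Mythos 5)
The overall scaffolding of your proposal is right: you correctly identify the two expressions for $R^v_{st}$, you correctly compute $\delta H_{s\theta t}=-B^1_{\theta t}R^g_{s\theta}-B^2_{\theta t}\delta g_{s\theta}$ from Chen's relations, and you correctly apply the Sewing Lemma on the coarse scale $W^{-3,p}$ to obtain
$|v^\natural_{st}|_{W^{-3,p}}\lesssim \omega_B(s,t)^\alpha\nn{R^g}{2}(s,t)+\omega_B(s,t)^{2\alpha}\nnn{\delta g}(s,t)$.
You also correctly recognize that applying Sewing directly on $W^{-2,p}$ fails (there $\delta H$ is only of order $\omega^{2\alpha}$ with $2\alpha<1$, so the hypothesis $a>1$ of Proposition \ref{pro:sewing} is violated).

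The gap is in the step that converts the $W^{-3,p}$-bound on $v^\natural$ into the desired $W^{-2,p}$-bound on $R^v_{st}=\int_s^tf_r\,\d r+B^2_{st}g_s+v^\natural_{st}$. You write that you will ``reinterpret this back on $W^{-2,p}$ via the loss-of-derivative embedding built into \ref{RD1}'', but the embedding goes the wrong way: $W^{-2,p}\hookrightarrow W^{-3,p}$, so a bound in $W^{-3,p}$ yields nothing in $W^{-2,p}$. There is no way to upgrade the estimate by a one-line embedding argument; this is precisely the obstacle you flag at the end, but you do not resolve it. The paper resolves it with an interpolation argument using the family of smoothing operators $J_\eta$ from \eqref{J1}--\eqref{J3}: one writes
$R^v_{st}=J_\eta\bigl(\int_s^tf_r\,\d r+B^2_{st}g_s+v^\natural_{st}\bigr)+(\id-J_\eta)(\delta v_{st}-B^1_{st}g_s)$,
bounds the first bracket via \eqref{J2} (so the bad term contributes $|v^\natural_{st}|_{W^{-3,p}}/\eta$), bounds the second bracket via \eqref{J3} (giving $\eta^2\|v\|_{L^\infty(L^p)}+\eta\,\omega_B(s,t)^\alpha\|g\|_{L^\infty(L^p)}$, because $\delta v_{st}$ is bounded in $L^p$ and $B^1_{st}g_s$ in $W^{-1,p}$), and then optimizes by choosing $\eta:=\zeta\omega_B(s,t)^\alpha$ with $\zeta$ large enough to produce the prefactor $\tfrac12$ in front of $\nn{R^g}{2}(s,t)+\omega_B(s,t)^\alpha\nnn{\delta g}(s,t)$. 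The rest of your derivation (including the final absorption when $v=g$) is fine, but without this smoothing/interpolation mechanism the proof does not close.
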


Before we proceed to the proof of Proposition \ref{pro:R_g}, let us observe the following.
There exists a family $(J_\eta )_{\eta \in(0,1)}$ of bounded linear maps
$J_\eta\in L\big(W^{k,p},W^{k,p}\big),\eta \in(0,1),$
$k\in\mathbb Z$ being arbitrary, 
such that:
\begin{align}
\label{J1}
&\text{\textbullet }\quad \quad 
J_\eta \quad \text{maps}\quad  W^{k,p}\quad  \text{into}\quad C^\infty ,\quad 
\text{for every }\enskip \eta \in(0,1)\,.
\intertext{For some constant $C_J>0,$ for any $\ell\in\N$ with $|k-\ell|\leq 2:$ 
	if $0\leq k\leq \ell\leq 3 ,$ then}
\label{J2}
&\text{\textbullet }\quad \quad 
|J_\eta |_{\mathscr L(W^{k,p}, W^{\ell ,p} )}\leq \frac{C_J}{\eta ^{\ell-k} }\,,\quad \text{for all}\quad \eta \in(0,1)\,.
\intertext{Finally, if $0\leq \ell \leq k\leq 3,$ then}
\label{J3}
&\text{\textbullet }\quad \quad 
|\id-J_\eta |_{\mathscr L(W^{k,p}, W^{\ell ,p})}\leq C_J\eta^{k-\ell } \,,
\quad \text{for all}\quad \eta \in(0,1)\,.
\end{align}
In the case when $U\equiv \R^d$ and $p\in[1,\infty]$ it suffices to consider $J_\eta f:= \eta ^{-d}\rho (\frac{\cdot }{\eta })*f,$ where $\rho $ is a radially symmetric, smooth function integrating to one. For the general case, we refer for instance to \cite[Appendix A.3]{hocquet2017energy})

From now on, we shall refer to $(J_\eta )_{\eta \in(0,1)}$ as a \emph{family of smoothing operators}.

\bigskip

With this observation at hand, we can now proceed to the proof of the above result.

\begin{proof}[Proof of Proposition \ref{pro:R_g}]
	Note that
	\[
	R^v_{st} := \delta v_{st} - B^1_{st}g_{s}\equiv \int_s^tf_r\d r + B^2_{st}g_{s} + v_{st}^\natural\,.
	\]
	Using \eqref{J2}-\eqref{J3}, we can interpolate these two different expressions for $R^g,$ by writing
	\begin{equation}
	\label{Rv_estim}
	\begin{aligned}
	|R^v_{st}|_{W^{-2,p}}
	&\leq |J_\eta (\int_s^t f\d r  + B_{st}^2g_s + v_{st}^\natural)|_{W^{-2,p}} + |(\id-J_\eta )[\delta v_{st}-B^1_{st}g_s]|_{W^{-2,p}}
	\\
	&\lesssim |\int_s^tf_r\d r|_{W^{-2,p}} + |B^2_{st}g|_{W^{-2,p}} + \frac{|v^\natural_{st}|_{W^{-3,p}}}{\eta}
	\\
	&\quad \quad \quad \quad \quad 
	+\eta ^22\|v\|_{L^\infty(L^p)} +\eta \omega _B(s,t)^\alpha  \|g\|_{L^\infty(L^p)}.
	\end{aligned}
	\end{equation} 
	
	In order to estimate $v^\natural,$ note that Chen's relations \eqref{chen} imply
	\[
	-\delta (B^1g + B^2g)_{s\theta t}= B^1_{\theta t}(\delta g_{s\theta }-B^1_{s\theta }g_s) + B^2_{\theta t}\delta g_{s\theta }
	=B^1_{\theta t}R^g_{s\theta } + B^2_{\theta t}\delta g_{s\theta }\,,\quad \text{for}\enskip (s,\theta ,t)\in\Delta _2\,.
	\]
	From this and the Sewing Lemma, we infer that
	\begin{equation}
	\label{pre:estim:v_nat}
	|v^\natural_{st}|_{W^{-3,p}}\leq C(\alpha )\left(\omega _B(s,t)^{\alpha }\nn{R^v}{2}(s,t) + \omega _B(s,t)^{2\alpha }\nnn{\delta v}(s,t)\right) \,.
	\end{equation}
	
	Now, since \eqref{Rv_estim} is true for arbitrary $\eta \in(0,1),$ we can choose $\eta :=\zeta \omega _B(s,t)^\alpha $ for some $\zeta >0$ big enough. We obtain from \eqref{pre:estim:v_nat}:
	\[
	\begin{aligned}
	|R^v_{st}|_{W^{-2,p}}
	&\leq \left(\int_s^t|f_r|_{W^{-2,p}}\d r\right) 
	+ \omega _B(s,t)^{2\alpha }\|v,g\|_{L^\infty(s,t;L^p)}
	+\frac{\nn{v^\natural}{3}(s,t)}{\zeta \omega _B(s,t)^\alpha }
	\\
	&\leq \left(\int_s^t|f_r|_{W^{-2,p}}\d r\right) 
	+ \omega _B(s,t)^{2\alpha }\|v,g\|_{L^\infty(s,t;L^p)}
	\\
	&\quad \quad \quad \quad \quad 
	+ \frac12\left( \nn{R^{g}}{2}(s,t) +\omega _B(s,t)^\alpha  \nnn{\delta g}(s,t) \right),
	\end{aligned}
	\]
	provided that $\omega _B(s,t)\leq L(\alpha ).$

	This shows the claimed property.
\end{proof}

Consider an equation of the form
$\d v= f\d t + \d\B v\,,$
with $f\in L^1(0,T;W^{-2,p}),$ and define the remainder $u^\natural\in \V^{1+}_2(0,T;W^{-3,p})$ as in \eqref{eq:u_natural}, namely
\begin{equation}
\label{def:remainder}
v^\natural_{st}:=\delta v_{st}-\int_s^tf_r\d r - (B^1_{st}+ B^2_{st})v_s,\quad (s,t)\in\Delta \,.
\end{equation}
As was observed in \cite{deya2016priori}, it is possible in this case to obtain a priori estimates on $v^\natural$ in $\V^{3\alpha }(W^{-3,p})$,
explicitly in terms of $\|f\|_{L^1(0,T;W^{-2,p})}$ and $\|v\|_{L^\infty(L^p)}$ only.
This is the content of the following result, which will be an essential tool in the sequel.

\begin{proposition}[Remainder estimates]\label{pro:apriori}
	Fix $\alpha \in(1/3,1/2],$ $p\in[1,\infty]$ and let $v\in L^\infty(0,T;L^p)$ such that 
	\begin{equation}
	\label{eq:v_gene}
	\d v=f\d t+ \d\B v,
	\end{equation} 
	for some $f\in L^p(0,T;W^{-2,p}).$
	
	Then, the remainder $v^\natural$ defined by \eqref{def:remainder} has locally finite $\frac{1}{3\alpha }$-variation.
	Moreover, there are constants $C,L>0$ depending only on $\alpha ,$
	such that for each $(s,t)\in\Delta $ satisfying
	\[
	\omega _B(s,t)\leq L,
	\]
	it holds
	\begin{equation}
	\label{estimate_remainder}
	\nn{v^{\natural}}{3}(s,t)\leq C\left(\omega _B(s,t)^{3\alpha }\| v\|_{L^\infty(s,t;L^p)}+
	\omega _B(s,t)^{\alpha }\int_s^t|f_{r}|_{W^{-2,p}}\d r\right).
	\end{equation}
	
	As a consequence, any $v$ satisfying the Euler-Taylor expansion \eqref{def:remainder} is controlled by $B$ with Gubinelli derivative $v'=v$,
	that is $\|(v,v)\|_{\mathcal D^{\alpha ,p}_B}<\infty.$
	In addition, it holds the a priori estimates 
	\begin{align}
	\label{precise_est_1}
	&\n{\delta v_{st}}_{-1}^{[\alpha ]}(s,t)\leq C \left[\left(\int_s^t|f_r|_{W^{-2,p}}\d r\right)^{\alpha } + \omega _B(s,t)^\alpha \|v\|_{L^\infty(s,t;L^p)}\right]
	\\
	\label{precise_est_2}
	&\nn{R^v_{st}}{2} (s,t)\leq C \left[ \int_s^t|f_r|_{W^{-2,p}}\d r  + \omega _B(s,t)^{2\alpha }\|v\|_{L^\infty(s,t;L^p)} \right]
	\end{align}
	for any $(s,t)\in \Delta $ such that $\omega _B(s,t)+\int_s^t|f_r|_{W^{-2,p}}\d r\leq L,$ where $L(\alpha )>0.$
\end{proposition}

Note that \eqref{estimate_remainder} is implicitly contained in \cite{deya2016priori}. Since our notations and settings are different, we provide a full proof.

\begin{proof}[Proof of Proposition \ref{pro:apriori}.]
	\textit{Proof of \eqref{estimate_remainder}.}
	By definition of a weak solution, there exists some $z\in(1,3\alpha ]$ such that $v^\natural$ has finite $1/z$-variation, namely:
	\[
	\omega_z (s,t):= \n{v^\natural}_{\V^z_2(s,t;W^{-3,p})}^{1/z}<\infty.
	\]
	Furthermore, we recall the following property (see \cite{hocquet2017energy}): for any $(s,t)\in\Delta,$
	\begin{equation}
	\label{alternative_caract}
	\omega _z(s,t)=\inf \left\{\omega (s,t),\,\omega:\Delta _{[s,t]}\to\R_+ \enskip \text{control such that}\enskip  (\omega )^z\geq |v^\natural|_{W^{-3,p}}\right\}.
	\end{equation} 
	
	Applying $\delta $ to both sides of \eqref{def:remainder} and making use of Chen's relations \eqref{chen}, we have for $(s,\theta ,t)\in\Delta _2,$ 
	\[
	\begin{aligned}
	\delta v_{s\theta t}^\natural 
	= B^1_{\theta t}(\delta v_{s\theta }-B^1_{s\theta }v_s)+ B^2_{\theta t}\delta v_{s\theta }
	\equiv B^1_{\theta t}R^{v}_{s\theta }+ B^2_{\theta t}\delta v_{s\theta }\,,
	\end{aligned}
	\]
	by definition of $R^v$ in \eqref{R_g}.
	Taking the $W^{-3,p}$-norm and then using \eqref{estim:v_sharp}, we obtain
	\begin{multline}
	\label{prelim:v_natural}
	|\delta v^\natural_{s\theta t}|_{W^{-3,p}}
	\leq \omega _B(s,t)^\alpha \nn{R^v}{2}(s,t) + \omega _B(s,t)^{2\alpha }\nnn{\delta v}(s,t)
	\\
	\lesssim
	\omega _B(s,t)^\alpha\int_s^t|f|_{W^{-2,p}}\d r + \omega _B(s,t)^{3\alpha }\|v\|_{L^\infty(s,t;L^p)}
	+ \omega _B(s,t)^{2\alpha }\nnn{\delta v}(s,t)
	\end{multline}
	so that the problem boils down to estimating the term $\nnn{\delta v}(s,t).$
	To obtain such an estimate, we proceed as in the proof of Proposition \ref{pro:apriori}, writing
	\begin{equation}
	\begin{aligned}
	\delta v_{st}
	&= (\id -J_\eta)\delta v_{st} + J_\eta\delta v_{st}
	\\
	&=(\id -J_\eta)\delta v_{st} + J_\eta (\int_s^tf_r\d r+B^1_{st}v_s + B^2_{st}v_s + v^\natural_{st})
	\end{aligned}
	\end{equation}
	where $J_\eta ,\eta \in(0,1),$ denotes a family of smoothing operators. Making use of the properties \eqref{J1}--\eqref{J3} we obtain
	\begin{multline*}
	|\delta v_{st}|_{W^{-1,p}}
	\lesssim \eta \|v\|_{L^\infty(s,t;L^p)} + \frac1\eta \int_s^t|f_r|_{W^{-2,p}}\d r
	+\omega _B(s,t)^\alpha \|v_s\|_{L^\infty;L^p} 
	\\
	+ \frac{\omega _B(s,t)^{2\alpha }}{\eta }\|v_s\|_{L^\infty(s,t;L^p)} + \frac{\omega _z(s,t)^{z}}{\eta ^2}
	\end{multline*}
	by definition of the control $\omega _z.$
	Going back to \eqref{prelim:v_natural}
	and making the choice
	\begin{equation}
	\label{choice}
	\eta :=\zeta \omega _B(s,t)^{\alpha },
	\end{equation} 
	for some parameter $\zeta>0$ (to be fixed later), we obtain the inequality
	\begin{multline}
	|\delta v^\natural_{s \theta t}|_{W^{-3,p}}
	\leq 
	C_J\Big(
	\omega _B(s,t)^\alpha \int_s^t|f_r|_{W^{-2,p}}\d r(1+\zeta ^{-1})
	\\
	+
	\omega _B(s,t)^{3\alpha }\|v\|_{L^\infty(L^p)}(1+\zeta +\zeta ^{-1})
	+\omega _z(s,t)^z \zeta ^{-2}\Big).
	\end{multline}
	Observe further that in \eqref{choice}, $\eta $ must belong to the interval $(0,1)$ by definition of a family of smoothing operators, which will always be true if $(s,t)\in \Delta $ is chosen so that
	$\omega _B(s,t)< L:=\zeta ^{-1/\alpha }.$
	If we fix $\zeta >0$ sufficiently large so that
	\begin{equation}
	\label{lambda}
	\frac{C_{\mathrm{sewing}}(z)C_J}{\zeta ^2}\leq \frac12
	\end{equation} 
	$C_{\mathrm{sewing}}(z)$ being the constant of the Sewing Lemma, this leads to the smallness assumption:
	\begin{equation}
	\omega _B(s,t)\leq L:=(C_{\mathrm{sewing}}(z)C_J)^{-1/(2\alpha) }\,.
	\end{equation} 
	Now, applying Proposition \ref{pro:sewing} and using \eqref{lambda}, we see that for any $(s,t)\in\Delta $ with $\omega _B(s,t)\leq L,$ it holds
	\[
	|v^{\natural}_{st}|_{W^{-3,p}}
	\leq C_z\Big(\omega _B(s,t)^{3\alpha }\|v\|_{L^\infty(L^p)}
	+\omega _B(s,t)^\alpha \int_s^t|f_r|_{W^{-2,p}}\d r
	\Big) +\frac12 \omega _z(s,t)^z,
	\]
	for some universal constant $C_z>0.$
	By the inequality $(a+b)^\epsilon \leq a^\epsilon +b^\epsilon $ for $a,b\geq 0$ and $\epsilon \in[0,1],$
	we have
	\begin{multline*}
	|v^{\natural}_{st}|_{W^{-3,p}}^{1/z}
	\leq (C_z)^{1/z}\Big[\omega _B(s,t)^{3\alpha/z }\|v\|_{L^\infty(L^p)}^{1/z}
	\\
	+\omega _B(s,t)^{\alpha/z} \Big(\int_s^t|f_r|_{W^{-2,p}}\d r\Big)^{1/z}\Big]
	+\frac{1}{2^{1/z}} \omega _z(s,t)
	\end{multline*}
	By \cite[p.22]{friz2010multidimensional},
	the above right hand side is a control, hence we infer from the property \eqref{alternative_caract} that
	\begin{multline*}
	\omega _z(s,t)\leq (C_z)^{1/z}\Big[\omega _B(s,t)^{3\alpha/z }\|v\|_{L^\infty(L^p)}^{1/z}
	\\
	+\omega _B(s,t)^{\alpha/z} \Big(\int_s^t|f_r|_{W^{-2,p}}\d r\Big)^{1/z}\Big]
	+\frac{1}{2^{1/z}} \omega _z(s,t),
	\end{multline*}
	which shows that for any $z\in(1,3\alpha ]$
	\begin{multline}
	\label{each_z}
	|v^\natural_{st}|_{W^{-3,p}}^{1/z}
	\leq \omega _z(s,t)\leq (C_z)^{1/z}\left(1-\frac{1}{2^{1/z}}\right)^{-1}\Big[\omega _B(s,t)^{3\alpha/z }\|v\|_{L^\infty(L^p)}^{1/z}
	\\
	+\omega _B(s,t)^{\alpha/z} \Big(\int_s^t|f_r|_{W^{-2,p}}\d r\Big)^{1/z}
	\Big].
	\end{multline}
	Letting now $z=3\alpha $ yields the inequality \eqref{estimate_remainder}.
	
	\bigskip
	
	\item[\textit{Proof of \eqref{precise_est_1}}.]
	Writing as before that
	$\delta v =(\id-J_\eta)\delta v+ J_\eta (\int f\d r + B^1v + B^2v + v^\natural),$
	and then using \eqref{J1}--\eqref{J3}, we have
	\[
	\nnn{\delta v}(s,t)
	\lesssim  \Big(\eta + \omega _B(s,t)^{\alpha } + \frac{\omega _B(s,t)^{2\alpha }}{\eta }\Big)\|v\|_{L^\infty(s,t;L^p)} 
	+\frac1\eta \int_s^t|f_r|_{W^{-2,p}}\d r + \frac{\nn{v^\natural}{3}(s,t)}{\eta ^2}\,.
	\]
	Combining with Proposition \ref{pro:apriori}, this gives
	\begin{multline}
	\label{pre_bound_delta}
	\nnn{\delta v}(s,t)
	\lesssim \Big(\eta + \omega _B(s,t)^{\alpha } + \frac{\omega _B(s,t)^{2\alpha }}{\eta } + \frac{\omega _B(s,t)^{3\alpha }}{\eta ^2}\Big)\|v\|_{L^\infty(s,t;L^p)} 
	\\
	+ (\frac1\eta + \frac{\omega _B(s,t)^\alpha }{\eta ^2}) \int_s^t|f_r|_{W^{-2,p}}\d r.
	\end{multline} 
	Upon choosing
	\[
	\eta :=\omega _B(s,t)^\alpha  + (\int_s^t|f_r|_{W^{-2,p}}\d r)^\alpha ,
	\]
	in \eqref{pre_bound_delta},
	we obtain the estimate
	\begin{multline*}
	|\delta v_{st}|_{W^{-1,p}}
	\lesssim
	\Big(\int_s^t|f_r|_{W^{-2,p}}\d r)^{\alpha }+\omega _B(s,t)^\alpha\Big)\|v\|_{L^\infty(s,t;L^p)}
	\\
	+(\int_s^t|f_r|_{W^{-2,p}}\d r)^{1-\alpha }+
	\omega _B(s,t)^\alpha (\int_s^t|f_r|_{W^{-2,p}}\d r)^{1-2\alpha } 
	\end{multline*}
	and the conclusion follows by the observation that $1-\alpha \geq \alpha .$
\end{proof}

\section{The parabolic class $\H^{\alpha ,p}_B$}
\label{sec:space}

This section is devoted to the definition of a natural functional setting for rough partial differential equations of the form \eqref{rough_parabolic}.
In a second part, we will address the problem of obtaining an explicit equation for the product of two elements $u\in L^\infty(L^p)$ and $v\in L^\infty(L^{p'}),$ where $1/p+1/p'=1$ and
such that
\[\d u= f\d t + \d\B u\]
while
\[\d v=g\d t+ \d\B v\]
on  $[0,T]\times\R^d,$ where $\B$ is a geometric, differential rough driver
(here we consider $f$ and $g$ as given distributions).
If $\B$ is ``built over'' a derivation-valued path, by which we mean that $B^1_{st}=B_t-B_s$ for some $B_t=X_t\cdot \nabla $,
one expects that $uv$ solves the problem
\begin{equation}
\label{product_sec_renorm}
\d (uv)=(ug+fv)\d t + \d\B (uv)\,.
\end{equation}
This indeed appears as a consequence of the Leibnitz-type identity $B_t(uv)=(B_tu)v + u(B_tv)$, the geometricity of $\B$ and a formal application of \cite[Proposition 7.6]{friz2014course} (apply first the It\^o formula on the square map, and then use polarization identities).
For a more general geometric $\B \sim (X,\LL)$ (i.e.\ with a non-zero multiplicative term $X^0_{st}$), a similar relation is expected, with the difference that $\B$ has to be ``shifted'' to a new object $\B^{(2)}$ of the same nature, but this time built over $X_t\cdot \nabla +2X_t^0.$ This fact will be made clear in the following paragraphs.

\subsection{A natural Banach space setting}
Let $p\in[1,\infty],$ fix a domain $U\subset\R^d,$ and
consider a $\V^\alpha $-differential rough driver $\B$ with $\alpha >1/3.$
We define a space $\H^{\alpha ,p}_B([0,T]\times U)$ as follows:
\begin{equation}
\label{nota:HB}
\begin{aligned}
&\H^{\alpha ,p}_B([0,T]\times U)
\\
&:=\Bigg\{
u\in L^\infty(L^p),\enskip \text{such that}\enskip (u,u)\in\mathcal D_B^{\alpha ,p},\enskip \text{and there is}\enskip f\in L^p(W^{-1,p}(U)),\enskip 
\\[-0.8em]
&\quad\quad \quad \quad 
\text{satisfying}\enskip \d u=f\d t+\d \B (u,u),\enskip \text{and with the property that}\enskip 
\\
&\quad\quad \quad
\|u\|_{\H_B^{\alpha ,p}([0,T]\times U)}:=\|u\|_{L^\infty(L^p(U))}+\|\nabla u\|_{L^p(L^p(U))}
+\|f\|_{L^p(W^{-1,p}(U))}
\\[-0.8em]
&\quad \quad \quad \quad \quad \quad 
\quad \quad \quad \quad \quad \quad 
+\|\delta u\|_{\V_2^\alpha (0,T;W^{-1,p}(U))}+ \|R^u\|_{\V_2^{2\alpha }(0,T;W^{-2,p}(U))}<\infty
\Bigg\},
\end{aligned}
\end{equation} 
where we recall notation \eqref{weak_rel}.
As before, in the case when $U=\R^d$ we omit to indicate the domain, and we define local versions $\H_{B,\loc}^{\alpha ,p}$ of these spaces by the property
\[
u\in \H_{B,\loc}^{\alpha ,p}\enskip \Leftrightarrow\enskip u|_{[0,T]\times K}\in \H_B^{\alpha ,p}([0,T]\times K)\enskip \text{for every }K\subset\subset \R^d\,.
\]

%

One of the main interests in defining the above spaces is the next compactness-type result, which will be fundamental in the sequel.

\begin{lemma}[$\H^{\alpha ,p}_B$-weak stability]
	\label{lem:stability}
	Fix an open set $U\subset \R^d,$ let $p\in [1,\infty]$ and consider a family $\{\B(n),n\in\mathbb N\}\cup\{\B\}$ of differential rough drivers such that $\rho _{\alpha }(\B(n),\B)\to0$ where $\rho _{\alpha }$ is the distance introduced in \eqref{topology_B}.
	For each $n\geq 0$ consider $v(n)\in\H^{\alpha ,p}_{B(n)}(U)$ 
	and
	$f^i(n)\in L^p(L^p(U)),i=0,\dots d,$ 
	such that
	\[\d v(n)=(\partial _if^i(n)+f^0(n))\d t + \d\B (n)v(n)\,,
	\]
	weakly in $L^p.$ Assume that the corresponding family is uniformly bounded in the sense that for every $n\geq 0$:
	\begin{equation}
	\label{stab:Lp}
	\|v(n)\|_{\H^{\alpha ,p}_{B(n)}(U)}\leq C\,,
	\end{equation}
	for some constant $C>0.$
	
	The following assertions are true.
	\begin{enumerate}[label=(\arabic*)]
		\item If $p>1,$ there exists $n_k\nearrow \infty, k\to\infty,$
		some $f^i\in L^p(L^p),$ $i=0,\dots d,$
		and $v\in\H^{\alpha ,p}_{B}$ so that
		\begin{equation}
		\label{weak}
		\begin{aligned}
		&v(n_k)\to v\quad \text{weakly-$*$}\enskip \text{in}\enskip L^\infty(0,T;L^p(U))\cap L^2(0,T;W^{1,p}(U))\,,
		\\
		&f(n_k)\to f\quad \text{weakly-$*$}\enskip \text{in}\enskip L^p(0,T;W^{-1,p}(U))\,,
		\end{aligned}
		\end{equation} 
		while for any $\alpha '<\alpha$:
		\begin{equation}
		\label{stab:1}
		(\delta v(n_k),R^{v(n_k)})\to(\delta v,R^v)\quad \text{in}\enskip \V^{\alpha '}_2(0,T;W^{-1,p}_\w(U))\times \V^{\alpha '}_2(0,T;W^{-2,p}_\w(U))\,.
		\end{equation}
		
		Moreover, $v$ satisfies
		\begin{equation}
		\label{v_satisfies}
		\d v=(\partial _if^i+f^0)\d t + \d\B v\,\,,
		\end{equation} 
		in the $L^1$-sense.
		
		\item A similar conclusion holds for $p=1$ if the family $\{(v(n),f(n)),$ $n\in\mathbb{N}\}$ is equi-integrable. Recall that $f(n)$ is said to be equi-integrable if it is bounded in $L^1$ and such that for any $\epsilon >0,$
		there exists $\delta _\epsilon >0$ and $\Omega_\epsilon \subset U$ with $|\Omega _\epsilon |<\infty$ so that uniformly in $n\geq 0$:
		\begin{align*}
		\iint_{[s,t]\times A}|f(n)|\d x\d r\leq \epsilon 
		\intertext{for every $A\subset U$ measurable and $(s,t)\in\Delta $ such that $(t-s)|A|\leq \delta _\epsilon $, and}
		\iint_{[0,T]\times (U\setminus\Omega) }|f(n)|\d x\d r\leq \epsilon \,.
		\end{align*}
	\end{enumerate}
\end{lemma}

\begin{proof}
	We first address the case $p>1.$
	In that case, the two first properties of \eqref{weak} are just a consequence of Banach-Alaoglu Theorem, together with the definition of the spaces $\H^{\alpha ,p}_{B}.$
	Concerning the last one, it relies on the following Aubin-Lions-type compactness result. The proof follows exactly the same steps as \cite[Lemma A.2 \& Lemma A.3]{hofmanova2018navier}, and is therefore omitted.\\

	\textbf{Claim.} {\em
		Let $\omega :\Delta \to\R_+$ be a control function, let $p\in(1,\infty]$ and fix $L>0.$ For $\kappa >0,$ introduce the Banach space
		\begin{multline*}
		\mathfrak X^\kappa(\omega )
		:=L^\infty(0,T;L^p)\bigcap L^p(0,T;L^p)\bigcap \Big\{u\in \V^\alpha (0,T;W^{-1,p}),\enskip |\delta u_{st}|\leq \omega (s,t)^\kappa ,
		\\
		\forall (s,t)\in\Delta\enskip \text{with}\enskip \omega (s,t)\leq L \Big\}\,,
		\end{multline*}
		endowed with the norm 
		\[\interleave u\interleave _{\kappa ,\omega }:=\|u\|_{L^\infty(L^p)}+\|u\|_{L^p(W^{1,p})}+ \sup_{(s,t)\in\Delta }\frac{|\delta u_{st}|_{W^{-1,p}}}{\omega (s,t)^\kappa }\,.\]
		
		Then, 
		\begin{multline}
		\label{compact_X}
		X^\kappa (\omega )\enskip \text{is compactly embedded into}
		\\ 
		L^p(0,T;L^p_\loc)\cap L^\infty(0,T;W^{-1,p}_\loc) \cap \V^{\kappa '}(0,T;W^{-2,p}_\loc)
		\quad 
		\text{for any}\enskip 0<\kappa '<\kappa .
		\end{multline}
		\\
	}

	By definition of $\H^{\alpha ,p}_{B}$, the norm of $v(n)$ in the controlled path space forms a uniformly bounded sequence. But thanks to Proposition \ref{pro:apriori}, we also have the precise estimate
	\begin{equation}
	\label{precise_context}
	|\delta v_{st}(n)|_{W^{-1,p}}\leq C\left[\left(\int_s^t|f(n)|_{W^{-2,p}}\d r\right)^\alpha  + \omega _{B(n)}(s,t)^\alpha \right]
	\leq C'(\alpha )\omega _n(s,t)^\alpha 
	\end{equation}
	for any $(s,t)\in\Delta $ such that $\omega _n(s,t):=\omega _{B(n)}(s,t)+\int_s^t|f(n)|_{W^{-2,p}}\leq L,$ where $L=L(\alpha )$ is independent of $n\in\N.$
	Though the estimate \eqref{precise_context} suffers the fact that the control $\omega _n$ depends on $n\in\N,$  we note that proceeding as in \cite[Lemma 2.3]{lejay2003introduction}, it is always possible to build a control $\varpi$ (depending on the whole sequence $\{\omega _n,n\in\N\}$) so that \eqref{precise_context} holds with $\varpi$ for all $n\in \N.$ For such $\varpi,$ by definition of the space $\mathfrak X^\alpha (\varpi)$, we therefore obtain the uniform estimate:
	\[
	\interleave\delta v_{st}(n)\interleave_{\alpha ,\varpi }\leq C \|v(n)\|_{L^\infty(L^p)}\leq \widetilde C\,\,,
	\]
	Hence property in \eqref{stab:1} follows by the compact embedding \eqref{compact_X}, and the obvious inclusion $\mathfrak X^{\alpha }(\varpi )\subset \V^\alpha (0,T;W^{-1,p}).$
	

	Now, let $f(n)\in L^p(W^{-1,p})$ such that $\d v(n)=f(n)\d t+\d\B v(n)$ for each $n\in\mathbb{N}.$
	Testing the equation against $\phi \in W^{3,p'}(U)$ then yields for every $(s,t)\in\Delta :$
	\begin{equation}
	\label{any_n}
	\langle\delta v_{st}(n),\phi \rangle -\langle [B^1_{st}(n)+B_{st}^2(n)]v_s(n),\phi \rangle - \int_s^t\langle f_r(n),\phi \rangle\d r= \langle v^\natural_{st}(n),\phi \rangle, 
	\end{equation} 
	where $v_{st}^\natural(n)\in \V^{1+}_2(0,T;W^{-3,p}(U))$ denotes the remainder term.

	We now show that $v$ belongs to $\H^{\alpha ,p}_{B}$ and satisfies \eqref{v_satisfies}.
	In \eqref{any_n}, the left hand side converges towards 
	\[\langle\delta v_{st},\phi \rangle -\langle [B^1_{st}+B_{st}^2]v_s,\phi \rangle 
	-\int_s^t\langle f_r,\phi \rangle\d r,\]
	for any $(s,t)\in\Delta ,$ as an obvious consequence of \eqref{weak}.
	Concerning the remainder term, it converges to some element $\langle v^{\natural},\phi \rangle \in \V^{3\alpha '}_2(0,T;\R)$ for any $\alpha '<\alpha ,$ as a consequence of \eqref{stab:1} and the continuity part of the Sewing Lemma.
	Using the convergence of $\B(n)$ and Proposition \ref{pro:apriori}, we see that $v^{\natural}$ defined above is actually an element of $\V_2^{3\alpha }(0,T;W^{-3,p}).$ By 
	\eqref{precise_est_1} and \eqref{precise_est_2}, one also obtains that $(\delta v,R^v)$ belongs to $\V_2^{\alpha }(W^{-1,p})\times\V^{2\alpha }_2(W^{-2,p}),$
	showing that $v$ is indeed an element of $\H^{\alpha ,p}_B.$ This proves the first part.

	Now, concerning the case $p=1,$ as is well-known the Dunford-Pettis Theorem (see e.g.\ \cite{albiac2016topics}) implies that a bounded family of $L^1$
	is relatively weakly compact if and only if it is equi-integrable. Hence, the second assertion follows by the same argument as before, using a slight modification of the above compactness claim. We omit the details.
\end{proof}

\subsection{Main result: product formula}

Let $u\in\H^{\alpha ,p}_{B},$ and $v\in \H^{\alpha ,p'}_B$ with $1/p+1/p'=1.$ If $\B$ is geometric, it seems natural to expect that the pointwise product $uv$ belongs to $\H^{\alpha,1}_{\tilde B}$ for some (possibly new) differential rough driver $\mathbf {\tilde B}.$ The main result of this section gives a justification of this intuition, by showing a product formula for $uv$ (it could be alternatively thought of an ``integration by parts'' formula).
By reiteration of the argument, a similar product formula will be shown on mononomials of bounded paths $u\in \H^{\alpha ,2}_B$, see Corollary \ref{cor:product}.

In what follows, we consider a fixed open set
$
U\subset \R^d\,.
$

\begin{proposition}[Product formula, general case]
	\label{pro:product}
	Let $\B$ be a geometric, $\V^\alpha $-differential rough driver with $\alpha \in(1/3,1/2]$,
	fix $p,p'\in [1,\infty]$ with $1/p+1/p'=1,$ and consider two elements $u,v\in \H_{B}^{\alpha ,1}(U)$ such that 
	\[u\in L^\infty(0,T;L^p(U))\cap L^p(0,T;W^{1,p}(U))
	\]
	while 
	\[v\in L^\infty(0,T;L^{p'}(U))\cap L^{p'}(0,T;W^{1,p'}(U)).
	\]
	Let $f^i,g^i\in L^1(0,T;L^1(U)),0\leq i \leq d,$ such that
	on $[0,T]\times U$,
	\[
	\left[\begin{aligned}
	&\d u= (\partial _i  f^i +f^0)\d t + \d\B  u,
	\quad \text{strongly in}\enskip L^p(U)\,,
	\\
	&\d v=(\partial _i  g^i + g^0) \d t +\d\B  v,
	\quad \text{strongly in}\enskip L^{p'}(U)\,,
	\end{aligned}\right.
	\]
	in the sense of Definition \ref{def:weak_sol}.
	Assume furthermore that for $i =0,\dots ,d,$ the pointwise products $\partial _i  u(\cdot )g^i(\cdot -a) $ and $ f^i(\cdot -a) \partial _i  v(\cdot )$ are in $L^1(0,T;L^1(U)),$ for any $a\in \R^d$ with $|a|\leq 1.$

	Then, the following holds:
	\begin{enumerate}[label=(\roman*)]
		\item \label{Q_shift}
		The two-parameter mapping $\B^{(2)}\equiv(B^{(2),1},B^{(2),2})$, whose components are defined for $(s,t)\in\Delta $ as the differential operators
		\begin{equation}
		\label{shifted_rough_driver}
		\left\{
		\begin{aligned}
		B^{(2),1}_{st}
		&:= B^1_{st}+X^0_{st} ,
		\\
		B^{(2),2}_{st}
		&:=B^2_{st}+ X^0_{st}X^i_{st}\partial _i+\LL^0_{st}+\frac32(X^0_{st})^2,
		\end{aligned}\right.
		\end{equation}
		is itself a geometric differential rough driver.
		
		\item \label{prod_uv}
		The pointwise product $uv$ belongs to $\H^{\alpha ,1}_{B^{(2)}}(U)$ and is an $L^1(U)$-energy solution of
		\begin{equation}
		\label{concl:prod}
		\d (uv)= \big[u(\partial _i  g^i + g^0) +(\partial _i f^i +f^0)v\big]\d t + \d \B^{(2)}(uv)\,.
		\end{equation}
	\end{enumerate}
\end{proposition}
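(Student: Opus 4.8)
The strategy is the familiar one for product/chain rules in the controlled-path framework: smooth everything out, apply the Leibniz rule at the smooth level where all objects are genuine functions, and pass to the limit using the stability result (Lemma \ref{lem:stability}) and the a priori remainder estimates (Proposition \ref{pro:apriori}). Concretely, using the geometricity of $\B$, we fix a sequence $B(n)\in C^1(0,T;\DD_1)$ with $\B(n)=S_2(B(n))$ and $\rho_\alpha(\B(n),\B)\to 0$; correspondingly we mollify the data by translation, replacing $u$ by $u(\cdot-a)$ (resp.\ $v$ by $v(\cdot-a)$) and averaging against a mollifier, as in the commutator-type argument of Appendix \ref{app:renorm}. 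For smooth inputs, $u(n)$ and $v(n)$ are honest functions solving genuine linear parabolic equations with drift $B(n)$, so the classical chain rule applies: $\d(u(n)v(n)) = [u(n)(\partial_i g^i(n)+g^0(n)) + (\partial_i f^i(n)+f^0(n))v(n)]\d t + \d\B^{(2)}(n)(u(n)v(n))$, where $\B^{(2)}(n) = S_2(B(n)^{(2)})$ with $B(n)^{(2)}_t = X(n)_t\cdot\nabla + 2X^0(n)_t$. The key algebraic point — part \ref{Q_shift} — is that the canonical lift of $X\cdot\nabla + 2X^0$ has exactly the second level written in \eqref{shifted_rough_driver}: one computes $\iint \d B^{(2)}_{r}\circ(B^{(2)}_r - B^{(2)}_s)$ directly and matches it against $B^2_{st} + X^0_{st}X^i_{st}\partial_i + \LL^0_{st} + \tfrac32(X^0_{st})^2$, using $\LL^i_{st}=\int_s^t\d X_r\cdot\nabla(X^i_{sr})$ from the Remark after Lemma \ref{lem:multiplication}. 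Since $\B^{(2)}(n)$ is by construction a canonical lift, its limit $\B^{(2)}$ is geometric, and the convergence $\rho_\alpha(\B^{(2)}(n),\B^{(2)})\to 0$ follows from $\rho_\alpha(\B(n),\B)\to0$ because the map $X\mapsto X^{(2)}$ and the lift $S_2$ are continuous in the relevant $\V^{i\alpha}(\mathscr L(H^k,H^{k-i}))$ topologies.

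Next I establish uniform bounds. The smoothed solutions $u(n),v(n)$ are uniformly bounded in $\H^{\alpha,p}_{B(n)}$ and $\H^{\alpha,p'}_{B(n)}$ respectively (mollification does not increase these norms, modulo the translation parameter which we send to zero jointly with $n$). For the product, I need $u(n)v(n)$ uniformly bounded in $\H^{\alpha,1}_{B^{(2)}(n)}$. Boundedness of $\|u(n)v(n)\|_{L^\infty(L^1)}$ and of $\|\nabla(u(n)v(n))\|_{L^1(L^1)}$ is Hölder's inequality ($L^\infty L^p\cap L^p W^{1,p}$ against $L^\infty L^{p'}\cap L^{p'}W^{1,p'}$), and the free-term $[u(n)(\partial_i g^i(n)+g^0(n)) + (\partial_i f^i(n)+f^0(n))v(n)]$ is bounded in $L^1(W^{-1,1})$ precisely thanks to the hypothesis that $\partial_i u\, g^i(\cdot-a)$ and $f^i(\cdot-a)\partial_i v$ lie in $L^1(L^1)$ uniformly for $|a|\le 1$ (this is where those somewhat technical assumptions are used; without them the product $\partial_i f^i \cdot v$ would only be a distribution of order $-2$, not $-1$). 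By Proposition \ref{pro:equivalence}, the full $\H^{\alpha,1}_{B^{(2)}(n)}$-norm is controlled once we bound $\nn{(u(n)v(n))^\natural}{3}$, and \emph{that} bound is furnished automatically by Proposition \ref{pro:apriori} (remainder estimates), applied to the equation satisfied by $u(n)v(n)$, in terms of $\|u(n)v(n)\|_{L^\infty(L^1)}$ and $\|f\|_{L^1(W^{-1,1})}$ and $\omega_{B^{(2)}(n)}$ — all uniformly bounded.

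Finally I pass to the limit. Along the way I must check that $u(n)v(n)\to uv$ and the free term converges; the former follows from strong $L^1_{\rm loc}$ convergence of $u(n)$ and $v(n)$ (Aubin–Lions / Rellich on the mollified-and-controlled sequences, as in the proof of Lemma \ref{lem:stability}), and the latter from the $L^1$ convergence of the products — which is exactly the content of a DiPerna–Lions-type commutator estimate: $\partial_i u(n)\,g^i(n) - (\partial_i u\,g^i)^{\rm mol}\to0$ in $L^1$, for which the uniform integrability and the bounds on $\partial_i u\,g^i(\cdot-a)$ are the needed ingredients. Then Lemma \ref{lem:stability} (applied with $\B^{(2)}(n)\to\B^{(2)}$) gives $u(n)v(n)\rightharpoonup uv$ in $\H^{\alpha',1}_{B^{(2)}}$ for every $\alpha'<\alpha$, and the limit equation is \eqref{concl:prod}; a final application of Proposition \ref{pro:apriori} with the true $\B^{(2)}$ upgrades $\alpha'$ back to $\alpha$, so $uv\in\H^{\alpha,1}_{B^{(2)}}$. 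The main obstacle is precisely the commutator/renormalization step: ensuring that the nonlinear products $\partial_i u\cdot g^i$ survive mollification-and-limit in $L^1$ rather than only in a negative Sobolev space — this is the parabolic analogue of the DiPerna–Lions renormalization lemma and is where the regularity assumptions $W^{3,\infty}$ on $X$ and the cross-integrability hypotheses on $(u,g)$, $(f,v)$ genuinely enter. Everything else is bookkeeping on the scale $(W^{k,1})_{-3\le k\le 3}$ and the already-established sewing/Gronwall machinery.
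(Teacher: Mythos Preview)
There is a genuine gap in your approach. The equations in Proposition~\ref{pro:product} are not parabolic: $u$ and $v$ are \emph{given} elements of $\H^{\alpha,1}_B$ satisfying $\d u=(\partial_if^i+f^0)\d t+\d\B u$ for arbitrary prescribed drifts $f^i$, with no elliptic operator $A$ present. Consequently there is no well-posedness theory and no continuous-dependence-on-$\B$ result available. Your claim that, after approximating $\B$ by smooth $\B(n)$, the mollified $u(n),v(n)$ ``are honest functions solving genuine linear parabolic equations with drift $B(n)$'' conflates two incompatible regularizations: if $u(n)$ is defined by \emph{solving} an equation with driver $\B(n)$, there is no reason it converges to $u$ (Theorem~\ref{thm:free} gives this only for the parabolic problem $\d u=(Au+f)\d t+\d\B u$, not for the generic one here); if instead $u(n)=u*\rho_\epsilon$ is a spatial mollification of the given $u$, then it does \emph{not} solve the equation with either $\B$ or $\B(n)$ --- a commutator $[\rho_\epsilon*,\B]u$ appears, and controlling it uniformly in $\epsilon$ in the rough regime is precisely the hard analytic step, not a consequence of the stability machinery you cite.

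The paper's proof avoids this circularity by tensorization (doubling of variables): one works with $u\otimes v$ on $\R^d\times\R^d$, where the Leibniz rule $\Gamma^1(\B)=B^1\otimes\id+\id\otimes B^1$ holds \emph{exactly} and the equation $\d(u\otimes v)=(f\otimes v+u\otimes g)\d t+\d\Gamma(\B)(u\otimes v)$ follows immediately from those for $u$ and $v$. One then blows up a neighbourhood of the diagonal via $T_\epsilon$ (Notation~\ref{nota:T_eps}); the core estimate is Theorem~\ref{thm:renorm}, which bounds $T_\epsilon^{-1,*}\Gamma^i(\B)T_\epsilon^*$ uniformly in $\epsilon$ on the $W^{k,1}(\Omega^D_1)$ scale --- this is the rough-driver analogue of the DiPerna--Lions commutator lemma, and is where geometricity (specifically, that $[\B]_{st}$ is first order) is genuinely used. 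Collapsing $\epsilon\to0$ then yields the equation for $uv$ on the diagonal, with the shifted driver $\B^{(2)}$ emerging from the Leibniz identities \eqref{gene:leib:1}. The cross-integrability hypothesis on $\partial_iu\,g^i(\cdot-a)$ is used in the paper not for a commutator limit but to bound the blown-up drift $(f\otimes v+u\otimes g)^\epsilon$ in $W^{-1,1}$ uniformly in $\epsilon$ (Step~1 of the proof). Your treatment of part~\ref{Q_shift} via direct computation of $S_2(X\cdot\nabla+2X^0)$ is fine and essentially equivalent to the paper's verification of Chen's relations for $\B^{(2)}$.
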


Regarding the definition of the spaces $\H^{\alpha ,p}_{B,\loc},$ we have the following immediate consequence of Proposition \ref{pro:product}.

\begin{corollary}[Product formula, transport case]
	\label{cor:product}
	Let $\B \sim (X,\LL)$ be as in Proposition \ref{pro:product} with $X^0=0.$
	Fix $p,p'\in[1,\infty]$ so that $1/p+1/p'=1,$ and let $u\in \H_{B,\loc}^{\alpha ,p}$ be such that
	\begin{equation}
	\label{eq:cor:u}
	\d u = f\d t+ \d\B  u\quad ,\quad \text{on}\enskip [0,T]\times \R^d,
	\end{equation}
	in the $L^p_\loc$, strong sense, for some $f\in L^p(W^{-1,p}).$
	
	The following holds.
	\begin{enumerate}[label=(\Roman*)]
		\item 
		Let $v\in \H_{B,\loc}^{\alpha ,p'}$ be an $L^{p'}_{\loc}$-energy solution of
		\[\d v= g\d t + \d\B  v\quad  \text{on}\enskip  [0,T]\times \R^d,
		\]
		with $g\in L^{p'}(W^{-1,p'}).$
		Then, the product
		$uv$ belongs to $\H^{\alpha ,1}_{B,\loc}$ and moreover $uv$ is an $L^1_{\loc}$-energy solution of
		\begin{equation}
		\d (uv)= (ug+fv)\d t + \d\B (uv)\,.
		\end{equation} 
		
		\item In the case where $p=2$ and  $u$ belongs to $L^\infty_{\loc},$
		then for each $n\in\N$ we have $u^n\in \H^{\alpha ,1}_{B,\loc},$ and moreover:
		\begin{equation}
		\d (u^n)= nu^{n-1}f \d t + \d\B (u^n)\,,\quad \text{on}\enskip [0,T]\times\R^d
		\end{equation}
		($L^1_\loc$ sense).
	\end{enumerate}
\end{corollary}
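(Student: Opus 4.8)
The plan is to deduce both statements from Proposition \ref{pro:product}, which already does all the analytic work; the only task here is to specialize it and then iterate. First I would observe that when $X^0=0$ the shifted rough driver $\B^{(2)}$ defined in \eqref{shifted_rough_driver} collapses back to $\B$ itself: indeed $B^{(2),1}_{st}=B^1_{st}+X^0_{st}=B^1_{st}$ and $B^{(2),2}_{st}=B^2_{st}+X^0_{st}X^i_{st}\partial_i+\LL^0_{st}+\tfrac32(X^0_{st})^2=B^2_{st}$ since all the extra terms carry a factor $X^0_{st}=0$ (note that by Lemma \ref{lem:multiplication}\ref{weak_G_1}, $\LL^0_{st}$ also vanishes when $X^0=0$, as it only enters $B^2$ through $\LL^0_{st}+\tfrac12(X^0_{st})^2$ and the generalized Chen relation \eqref{gene:chen} forces $\delta\LL^0_{s\theta t}=X^j_{\theta t}\partial_j X^0_{s\theta}=0$; combined with $\LL^0_{tt}=0$ this gives $\LL^0\equiv 0$). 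Hence $\B^{(2)}=\B$ and the conclusion \eqref{concl:prod} of Proposition \ref{pro:product} reads $\d(uv)=(ug+fv)\d t+\d\B(uv)$.

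For part (I), the subtlety is that Proposition \ref{pro:product} is stated for a fixed domain $U$ with integrability exponent $1$ on the free terms, whereas here $u\in\H^{\alpha,p}_{B,\loc}$ with $f\in L^p(W^{-1,p})$. I would argue locally: fix $K\subset\subset\R^d$, pick a slightly larger $K'\subset\subset\R^d$ and a cutoff, and note that on $K'$ we may write $f=\partial_i f^i+f^0$ with $f^i\in L^p(0,T;L^p(K'))$, and similarly $g=\partial_i g^i+g^0$ with $g^i\in L^{p'}(0,T;L^{p'}(K'))$. The cross-integrability hypothesis of Proposition \ref{pro:product} — that $\partial_i u(\cdot)g^i(\cdot-a)$ and $f^i(\cdot-a)\partial_i v(\cdot)$ lie in $L^1(0,T;L^1)$ — follows from Hölder's inequality in space together with the energy bounds $u\in L^p(0,T;W^{1,p})$, $g^i\in L^{p'}(0,T;L^{p'})$ (resp. $f^i\in L^p$, $\nabla v\in L^{p'}$) and the continuity of translations on $L^p$-spaces, after shrinking the domain to absorb the translation by $a$ with $|a|\le 1$. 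Applying Proposition \ref{pro:product} on $K'$ (the restriction of a geometric differential rough driver acting on $(W^{k,p}(K'))$ being again of the same type) yields $uv\in\H^{\alpha,1}_{B}([0,T]\times K)$ solving $\d(uv)=(ug+fv)\d t+\d\B(uv)$; since $K$ was arbitrary this is exactly the $L^1_\loc$-energy solution statement. One should also check that $ug+fv$, a priori only a distribution, does define an element of $L^1(0,T;W^{-1,1}_\loc)$: writing $ug=\partial_i(ug^i)-g^i\partial_i u+ug^0$ and symmetrically for $fv$, each piece is in $L^1_\loc$ in space-time by the same Hölder argument, so $ug+fv\in L^1(W^{-1,1}_\loc)$ as required by Definition \ref{def:var_sol}.

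For part (II), with $p=2$ and $u\in L^\infty_\loc$, I would proceed by induction on $n$. The case $n=1$ is the hypothesis \eqref{eq:cor:u}. Assuming $u^n\in\H^{\alpha,1}_{B,\loc}$ solves $\d(u^n)=nu^{n-1}f\,\d t+\d\B(u^n)$, apply part (I) with the first factor $u\in\H^{\alpha,2}_{B,\loc}$ (free term $f\in L^2(W^{-1,2})$) and the second factor $v:=u^n$. To invoke part (I), $u^n$ must lie in $\H^{\alpha,p'}_{B,\loc}$ with $p'=2$ and in $L^{2}(0,T;W^{1,2}_\loc)$; boundedness of $u$ gives $u^n\in L^\infty_\loc$, and $\nabla(u^n)=nu^{n-1}\nabla u\in L^2_\loc$ since $u^{n-1}\in L^\infty_\loc$ and $\nabla u\in L^2_\loc$, so $u^n\in\H^{\alpha,2}_{B,\loc}$ by the induction hypothesis together with these regularity gains. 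Part (I) then gives $u\cdot u^n=u^{n+1}\in\H^{\alpha,1}_{B,\loc}$ with
\[
\d(u^{n+1})=\big(u\cdot n u^{n-1}f+f\cdot u^n\big)\d t+\d\B(u^{n+1})=(n+1)u^n f\,\d t+\d\B(u^{n+1}),
\]
completing the induction. The main obstacle I anticipate is purely bookkeeping: making sure at each induction step that the product $u^{n-1}f$ (and its counterpart) genuinely lands in $L^1(W^{-1,1}_\loc)$ and that the cross-integrability hypothesis of Proposition \ref{pro:product} is met — both of which reduce to Hölder's inequality using $u\in L^\infty_\loc$, but must be stated carefully so that the exponents $p=p'=2$ at the base of the recursion are not silently violated. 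Everything else is a direct specialization.
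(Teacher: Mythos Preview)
Your approach is correct and matches the paper, which declares the corollary an immediate consequence of Proposition~\ref{pro:product} without a separate proof. Two points to tighten. First, the argument that $\delta\LL^0=0$ and $\LL^0_{tt}=0$ force $\LL^0\equiv 0$ fails for $2\alpha\le 1$: any path increment (e.g.\ $(t-s)c(x)$) satisfies both conditions. The paper also glosses over this; the honest justification goes through Appendix~\ref{app:algebraic}, where $\LL^0$ is obtained as the limit of $\int_s^t\d X^\mu_r(n)\,\partial_\mu X^0_{sr}(n)$ along smooth canonical lifts, and vanishes once one chooses approximations with $X^0(n)\equiv 0$. Second, in the induction for (II) you apply Part~(I), which would need the drift $nu^{n-1}f$ of $v=u^n$ to lie in $L^{2}(W^{-1,2}_\loc)$; but the Leibniz expansion of $u^{n-1}\partial_if^i$ contains $(n-1)u^{n-2}f^i\partial_iu$, which is only $L^1(L^1_\loc)$ in general. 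The fix is to invoke Proposition~\ref{pro:product} directly at each step (its drift hypotheses are in $L^1(L^1)$), which your final paragraph already anticipates.
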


\begin{remark}
	A similar conclusion as that of Corollary \ref{cor:product} holds when $\B \sim (X,\LL)$ with $X^0\neq 0.$
	In this case, it is easily seen by induction that for every $n\in\mathbb{N}:$
	\[
	\d (u^n)= nu^{n-1}f \d t + \d \B^{(n)}[u^n]
	\]
	in $L^1_\loc,$ where using the notation of Lemma \ref{lem:multiplication},
	$\B^{(n)}$ is the geometric differential rough driver defined as
	\[
	\left\{
	\begin{aligned}
	B^{(n),1}_{st}
	&:= X^i_{st}\partial _i + nX^0_{st},\quad
	\\
	B^{(n),2}
	&:= \frac12X_{st}^iX_{st}^j\partial _{ij} + \left(\LL_{st}^i + nX^0_{st}X^i_{st}\right)\partial _i + n\LL^0_{st}+ \frac{n^2}{2}(X_{st}^0)^2\,\,,
	\end{aligned}\right.
	\]
	or making use of notation \ref{not:summarize}:
	\[
	\B^{(n)}\sim \Big((nX^0,X^1,\dots,X^d);(n\LL^0,\LL^1,\dots,\LL^d)\Big)\,.
	\]
\end{remark}

Before we proceed to the proof of Proposition \ref{pro:product}, we need to introduce some additional notation.
In what follows, we fix a bounded, open set
$D\subset U,$ such that
\[
\gamma :=1\wedge\mathrm{dist}(D,\partial U)>0\,.
\]

\begin{nota}
	For $\epsilon \in(0,1]$
	we will denote by $D_\epsilon $ the $\epsilon \gamma $-fattening of $D$, namely
	\[
	D_\epsilon :=\{x+\epsilon h\in \R^d,\,x\in D\,\text{and}\enskip h\in B_\gamma  \}.
	\]
	For such $D,$ we further define a set $\Omega_\epsilon^{D} \subset \R^d\times\R^d$ as follows:
	\begin{equation}
	\label{nota:Omega}
	\Omega^{D}_\epsilon  
	:=\left\{(x,y)\in U\times U,\,\frac{x+y}{2}\in D\,,\frac{x-y}{2}\in B_\epsilon \right\}.
	\end{equation}
\end{nota}

\begin{nota}
	\label{nota:T_eps}
	For $k\in I\subset \mathbb Z$ 
	we define a linear, one-to-one transform $T_\epsilon$, by the formula
	\begin{equation}
	\label{blow_up}
	T_\epsilon \Phi (x,y):=\frac{1}{(2\epsilon )^d}\Phi \left(\frac{x+y}{2}+\frac{x-y}{2\epsilon },\frac{x+y}{2}-\frac{x-y}{2\epsilon }\right),
	\end{equation}
	for all $\Phi \in W^{k,\infty}_0(\R^d\times\R^d).$
	In particular, identifying $\Phi \in W^{k,\infty}_0(\Omega ^{D}_1)$ with its extension by $0$ outside its support, we have an isomorphism $T_\epsilon :W^{k,p}_0(\Omega ^{D}_1 )\to W^{k,p}_0(\Omega ^{D }_\epsilon )$.
\end{nota}
According to the terminology introduced in \cite{deya2016priori}, any geometric differential rough driver is ``renormalizable''.
This is the statement of the following Theorem, whose proof is rather technical and, for that reason, postponed in Appendix \ref{app:renorm}.
\begin{theorem}
	\label{thm:renorm}
	Let $\B$ be a geometric, differential rough driver with regularity $\alpha >1/3.$
	
	Introduce the differential rough driver $\Gamma(\B)\equiv(\Gamma ^1(\B),\Gamma ^2(\B))$ given for every $(s,t)\in\Delta $ by
	\begin{equation}
	\label{nota:S}
	\left\{
	\begin{aligned}
	&\Gamma_{st}^1(\B):=B^1_{st}\otimes \id + \id \otimes B^1_{st},
	\\
	&\Gamma_{st}^2(\B):= B^2_{st}\otimes \id + B^1_{st}\otimes B^1_{st}+ \id\otimes B^2_{st}
	\end{aligned}\right.
	\end{equation}
	(the fact that this is indeed a differential rough driver is elementary and hence left to the reader).
	
	Then, for each $i=1,2$ and $k=-3+i,\dots, 0,$ the following uniform bound holds
	\begin{equation}
	\label{bounds:renormalization}
	\big|T_\epsilon ^{-1,*}\Gamma _{st}^i(\B)T_\epsilon^{*} \big|_{\mathscr L(W^{1,k}(\Omega _1^D),W^{1,k-i}(\Omega _1^D))}\leq
	C\omega _B(s,t)^{i\alpha }
	\end{equation}
	where $C>0$ denotes a constant which is independent of $\epsilon \in(0,1],$ while $\omega _B$ is the control introduced in Definition \ref{def:rough_driver}.
\end{theorem}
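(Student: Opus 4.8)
The plan is to prove the bound \eqref{bounds:renormalization} by reducing it to a statement about the behaviour of the coefficients of $\Gamma^i(\B)$ under the conjugation $T_\epsilon^{-1,*}(\cdot)T_\epsilon^*$, uniformly in $\epsilon$. The key observation is that $T_\epsilon$ is essentially a rescaled change of variables $(x,y)\mapsto(\tfrac{x+y}{2}+\tfrac{x-y}{2\epsilon},\tfrac{x+y}{2}-\tfrac{x-y}{2\epsilon})$ which blows up the ``anti-diagonal'' direction $x-y$ by a factor $1/\epsilon$. Writing the coefficients of $B^i_{st}$ in the form prescribed by Lemma \ref{lem:multiplication} (so $B^1_{st}=X^j_{st}\partial_j+X^0_{st}$ and $B^2_{st}$ as in \eqref{B2}), the tensor operators $\Gamma^1_{st}(\B)$ and $\Gamma^2_{st}(\B)$ have coefficients which are sums of terms like $X^j_{st}(x)$, $X^j_{st}(x)X^k_{st}(y)$, $\LL^j_{st}(x)$, etc., evaluated at $x$ and $y$ separately. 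The conjugation by $T_\epsilon$ turns derivatives $\partial_{x_i},\partial_{y_i}$ into combinations of $\partial$ along the diagonal $\frac{x+y}{2}$ (order $1$ in $\epsilon$, harmless) and $\frac1\epsilon\partial$ along the anti-diagonal (the dangerous, $\epsilon^{-1}$-divergent part).

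The heart of the argument — and the main obstacle — is that the would-be singular contributions of order $\epsilon^{-i}$ must cancel. This cancellation is exactly where geometricity enters: the generalized Chen relations \eqref{gene:chen}, $\delta\LL^i_{s\theta t}=X^j_{\theta t}\partial_j(X^i_{s\theta})$, together with the explicit form \eqref{B2} of $B^2$, force the ``diagonal'' parts of $\Gamma^2$ (those that would produce $\epsilon^{-2}$ after conjugation) to assemble into $\tfrac12(\Gamma^1)^{\circ 2}$ modulo lower-order terms, and the symmetry of the second-order coefficient $\tfrac12 X^i_{st}X^j_{st}$ means the purely anti-diagonal second-order piece vanishes after the symmetric blow-up $T_\epsilon$. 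This is a ``commutator lemma'' in the spirit of DiPerna–Lions \cite{diperna1989ordinary}: one Taylor-expands the coefficients $X^j_{st}(\tfrac{x+y}2\pm\tfrac{x-y}{2\epsilon})$ in the anti-diagonal variable, and the first-order Taylor term (which carries the $\epsilon^{-1}$) is controlled by $|\nabla X^j_{st}|_{L^\infty}\cdot|x-y|\cdot\epsilon^{-1}\lesssim|\nabla X^j_{st}|_{L^\infty}$ on $\Omega_1^D$ since $|x-y|\lesssim\epsilon$ there; the point is that after this expansion what remains is at worst a bounded operator on the relevant Sobolev scale with norm controlled by $\omega_B(s,t)^{i\alpha}$, using $\|X^j_{st}\|_{W^{3,\infty}}\lesssim\omega_B(s,t)^\alpha$ and $\|\LL^j_{st}\|_{W^{2,\infty}}\lesssim\omega_B(s,t)^{2\alpha}$ from Definition \ref{def:rough_driver} and Lemma \ref{lem:multiplication}.

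Concretely I would proceed in the following order. First, record the explicit action of $T_\epsilon^{-1,*}(\cdot)T_\epsilon^*$ on a general differential operator on $\R^d\times\R^d$ with $W^{k,\infty}$ coefficients, tracking the $\epsilon$-powers: a multiplication operator by $\phi(x,y)$ becomes multiplication by $T_\epsilon^{-1}\phi$, which on $\Omega_1^D$ has the same $L^\infty$ norm and whose derivatives pick up at most one factor $\epsilon^{-1}$ per anti-diagonal derivative. Second, treat $\Gamma^1$: its coefficients are of the form $X^j_{st}(x)$ or $X^j_{st}(y)$; after conjugation the only term with a negative power of $\epsilon$ comes from applying an anti-diagonal derivative to such a coefficient, and since $X^j_{st}$ is evaluated at a point within $O(\epsilon)$ of the diagonal, a first-order Taylor expansion in the anti-diagonal variable absorbs the $\epsilon^{-1}$; one is left with an operator bounded on $\mathscr L(W^{1,k},W^{1,k-1})$ with norm $\lesssim\|X_{st}\|_{W^{3,\infty}}\lesssim\omega_B(s,t)^\alpha$. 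Third, for $\Gamma^2$, use the decomposition $B^2_{st}=\tfrac12 B^1_{st}\circ B^1_{st}+[\B]_{st}$ from Lemma \ref{lem:multiplication}\ref{weak_G_2}: the $[\B]$ part is only first order and handled as in step two (with an extra $\omega_B$ power), while for the $\tfrac12(B^1)^{\circ2}$ part one writes $\Gamma^2_{st}(\B)=\tfrac12\Gamma^1_{st}(\B)\circ\Gamma^1_{st}(\B)+(\text{first-order correction})$ — this identity is precisely the content of weak geometricity applied to the tensorized driver, and it reduces the $\epsilon^{-2}$ estimate to composing two copies of the step-two bound, giving $\lesssim\omega_B(s,t)^{2\alpha}$ on $\mathscr L(W^{1,k},W^{1,k-2})$. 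The delicate bookkeeping is ensuring each Taylor remainder genuinely lands in the claimed Sobolev space with the stated uniformity; this is routine but lengthy, which is why it belongs in Appendix \ref{app:renorm} rather than in the main text.
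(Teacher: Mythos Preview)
Your proposal is correct and follows essentially the same route as the paper's proof in Appendix~\ref{app:renorm}. The paper encapsulates your ``step two'' (the Taylor-type estimate controlling the $\epsilon^{-1}$ contribution for a first-order operator $V_x+V_y$) into a separate Lemma~\ref{lem:V}, applies it to $V=B^1_{st}$ and to $V=[\B]_{st}$ (both first order, the latter by weak geometricity~II), and then uses precisely the algebraic identity you state in step three, namely $\Gamma^{2,\epsilon}_{st}(\B)=\tfrac12\Gamma^{1,\epsilon}_{st}(\B)\circ\Gamma^{1,\epsilon}_{st}(\B)+T_\epsilon^{-1}([\B]_x+[\B]_y)_{st}T_\epsilon$, to conclude.
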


Before we proceed to the proof of the main result, let us observe that if $a\in W^{-k,p'}$ and $b\in W^{k,p},$ then the product $ab$ has a well-defined meaning as an element of $ab\in W^{-1,1}$ (it suffices to write $a$ in terms of its antiderivatives, and to integrate by parts).
Moreover, if $a,b$ are measurable functions (i.e.\ not distributions), then the adjoint of $T_\epsilon $ is given by the formula
\begin{equation}
\label{T_epsilon_star}
T_\epsilon^*[a(x)b(y)]
=2^{-d}a\Big(\frac{x+y}{2}+\epsilon \frac{x-y}{2}\Big)b\Big(\frac{x+y}{2}-\epsilon \frac{x-y}{2}\Big).
\end{equation}
Testing against $\Phi \in W_0^{k,\infty}(\Omega _1^{D}),$ and doing the change of variables $(x_+,x_-):=\chi (x,y)\equiv(\frac{x+y}{2},\frac{x-y}{2}),$ this gives the formula
\begin{equation}
\label{representation}
\langle T_\epsilon ^*v,\Phi \rangle
=\int_{B_1}\BRAKET{W^{-k,1}(D)}{a\big(\cdot +\epsilon x_-\big)b\big(\cdot -\epsilon x_-\big),\Phi \circ \chi ^{-1}(\cdot ,x_-) }{W^{k,\infty}_0(D)}\d x_-\,.
\end{equation}
Now, in the general case where $a\in W^{-k,p'}$ is a distribution, it is easily seen that \eqref{representation} is still meaningful. This formula will be useful in the sequel.

We can now turn to the proof of the main result.

\begin{proof}[Proof of Proposition \ref{pro:product}.]
	\textit{Step $0$: doubling of variables.}
	In the sequel, we let for simplicity
	\[
	f:=\partial _i  f^i + f^0,\quad \enskip g:=\partial _i  g^i +g^0 \,,
	\]
	and denote by $u\otimes v$ the function of two variables
	\[
	(u\otimes v)_t(x,y):=u_t(x)v_t(y),\quad  \text{for every} \enskip (x,y)\enskip \text{in}\enskip \Omega ^{D}_1.
	\]
	
	For any $\epsilon \in(0,1)$ and $(s,t)\in\Delta ,$ we further introduce
	\begin{empheq}[left=\empheqlbrack]{align}
	&(u\otimes v)^\epsilon_s 
	:=T_\epsilon ^*\left((u_s\otimes v_s)\big|_{\Omega _\epsilon ^{D}}\right),
	\\
	&(f\otimes v +u\otimes g)_s^\epsilon 
	:=T_\epsilon ^*\left((f_s\otimes v_s +u_s\otimes g_s)\big|_{\Omega _\epsilon ^{D}}\right),
	\\
	&\Gamma ^\epsilon (\B)=(\Gamma ^{1,\epsilon }(\B),\Gamma ^{2\epsilon }(\B))\enskip \text{where}\enskip 
	\begin{cases}
	\Gamma^{1,\epsilon }_{st}(\B) :=T_\epsilon ^*\Gamma_{st}^1(\B) (T_\epsilon ^*)^{-1},\\
	\Gamma_{st}^{2,\epsilon }(\B):=T_\epsilon ^*\Gamma_{st}^2(\B) (T_\epsilon ^*)^{-1}\,.
	\end{cases}
	\end{empheq}

	Then, the following assertions are true.
	\begin{enumerate}[label=(\arabic*)]
		\item 
		$(u\otimes v)^\epsilon $ belongs to $\H^{\alpha,1}_{\Gamma (\B)}(\Omega ^{D}_1  ).$
		\item the mapping
		$t\mapsto (f_t\otimes v_t + u_t\otimes g_t)^\epsilon, $
		is Bochner integrable in the space $W^{-1,1}(\Omega ^{D}_1),$
		
		\item  $(u\otimes v)^\epsilon $ is an $L^1(\Omega ^{D}_1)$-energy solution of the equation
		\begin{equation}
		\label{eq:tensor_eps}
		\d (u\otimes v)^\epsilon  = \big(f\otimes v +u\otimes g\big)^\epsilon \d t 
		+\d \Gamma^\epsilon (\B) (u\otimes v)^\epsilon .
		\end{equation} 
	\end{enumerate}
	
	The proof of the above properties is rather technical, but follows exactly the same pattern as that of \cite[Section 5]{hocquet2017energy}, hence we leave the details to the reader.
	
	\bigskip
	
	\item[\indent\textit{Step 1: uniform bound on the drift.}]
	If $\Phi \in W^{1,\infty}_0(\Omega ^{D}_1)$ and $(s,t)\in\Delta ,$ we have by definition
	\begin{equation}
	\label{drift_product}
	\BRAKET{}{\int_s^t(u_r\otimes g_r +f_r \otimes v_r)^\epsilon \d r ,\Phi}{}
	=\int_s^t\BRAKET{}{u_r\otimes g_r +f_r \otimes v_r ,T_\epsilon \Phi }{}\d r.
	\end{equation} 
	
	Fix $r\in[s,t]$ such that $u\equiv u_r$ belongs to $W^{1,p},$  and let $\check\Phi (x_+,x_-):=\Phi \circ\chi ^{-1}(x_+,x_-)=\Phi (x_++x_-,x_+-x_-).$ 
	Making use of \eqref{representation}, we have for the first term in \eqref{drift_product}:
	\[
	\begin{aligned}
	\langle u\otimes g  ,T_\epsilon \Phi \rangle
	&=\int_{B_1}\BRAKET{W^{-1,p'}(D)}{g(\cdot -\epsilon x_-) ,u(\cdot +\epsilon x_-)\check\Phi (\cdot ,x_-)}{W^{1,p}(D)}\d x_-
	\\
	&=\iint_{B_1\times D}\Big\{g^i(x_++\epsilon x_-)(-1)^i\frac{\partial }{\partial x^i_+}\big[u(x_++\epsilon x_-)\check\Phi (x_+,x_-)\big]
	\\
	&\quad \quad \quad \quad \quad \quad \quad 
	+ g^0(x_++\epsilon x_-)u(x_++\epsilon x_-)\check\Phi (x_+,x_-)\Big\}\d x_+\d x_-\,.
	\end{aligned}
	\]
	Hence, we have
	\begin{align}
	\nonumber
	\langle u\otimes g  ,T_\epsilon \Phi \rangle
	&\leq \iint_{B_1\times D}\Big\{|g^i (x_+-\epsilon x_-)||\partial _i  u (x_++\epsilon x_-))|
	\\
	&\quad \quad \quad \quad \quad 
	+|g^0 (x_+-\epsilon x_-)|| u (x_++\epsilon x_-))|
	\Big\}(|\check\Phi|+|\nabla _+\check\Phi |)\d x_+\d x_-
	\\
	\nonumber
	&\leq |\Phi |_{W^{1,\infty}}\int_{B_1}\d x_-\int_{D+\epsilon x_- }\Big\{|g^i(x_+ - 2\epsilon x_-)||\partial _iu(x_+)|
	\\
	\nonumber
	&\quad \quad \quad \quad \quad \quad
	\quad \quad \quad 
	+|g^0 (x_+-2\epsilon x_-)|| u (x_+))|
	\Big\}\d x_+
	\\
	\label{gi_diu}
	&\leq |\Phi |_{W^{1,\infty}}\int_{B_1}\left(|g^i_{(\epsilon x_-)}\partial _iu |_{L^1(D_{\epsilon })}+ |g^0_{(\epsilon x_-)}u|_{L^1(D_\epsilon )}\right)\d x_-\,,
	\end{align}
	where for simplicity for $i=0,\dots ,d,$ we denote by 
	\[
	g^i_{(\epsilon x_-)}(x_+):=
	\begin{cases}
	g^i(x_+ - 2\epsilon x_-)\quad \text{if}\enskip x_+-2\epsilon x_-\in D_{\epsilon }
	\\
	0\quad \text{otherwise}\,.
	\end{cases}
	\]
	(Note that, by assumption, the right hand side in \eqref{gi_diu} is finite.)
	Doing similar computations for the second term, and then integrating in time,
	we end up with the estimate
	\begin{multline}
	\label{uniform_bound_drift}
	\Big|\int_s^t(u_r\otimes g_r +f_r \otimes v_r)^\epsilon \d r\Big|_{W^{-1,1}(\Omega ^{D}_1)}
	\\
	\leq 
	\int_{B_1}\Big(\|\partial _i  ug_{(\epsilon x_-)}^i,ug^0_{(\epsilon x_-)}\|_{L^1(s,t;L^1(D_{\epsilon }))}
	+\|f^i_{(-\epsilon x_-)} \partial _i  v, f^0_{(-\epsilon x_-)}v\|_{L^1(s,t;L^1(D_{\epsilon } ))}
	\Big)\d x_-
	\\
	=:\omega _{\mathscr D,D_{\epsilon }}(s,t)\,,
	\end{multline}
	where we further observe that $\omega _{\mathscr D,D_{\epsilon }}$ is a control since positive linear combinations of controls are controls.
	
	\bigskip
	
	\item[\indent\textit{Step 2: convergence of the remainder term.}]
	For a.e.\ $r\in [s,t],$ it is straightforward to check the inequality
	\[
	|(u\otimes v)^\epsilon _r|_{L^1(\Omega ^{D}_1)}\leq|D_{\epsilon }||u_r|_{L^p(D_{\epsilon })}|v_r|_{L^{p'}(D_{\epsilon })}.
	\]
	Therefore, by Theorem \ref{thm:renorm} together with Proposition \ref{pro:apriori} we obtain the following bound on the remainder $(u\otimes v)^{\epsilon ,\natural}$ associated to \eqref{eq:tensor_eps}:
	\begin{multline}
	\label{uniform_bound_remainder}
	|(u\otimes v)^{\epsilon ,\natural}_{st}|_{W^{-3,1}(\Omega ^{D}_1)}
	\leq C\Big(|D_{\epsilon }|\|u_r\|_{L^\infty(L^p(D_{\epsilon }))}\|v_r\|_{L^\infty(L^{p'}(D_{\epsilon }))}\omega _B(s,t)^{3\alpha }
	\\
	+\omega _{\mathscr D,D_{\epsilon }}(s,t)\omega _B(s,t)^\alpha \Big),
	\end{multline}
	for every $(s,t)\in\Delta$ such that $\omega _B(s,t)\leq L$ for some $L(\alpha )>0,$ and every $\epsilon \in(0,1).$
	
	Fix
	\begin{equation}
	\label{choice:psi}
	\psi \in W^{3,\infty}_0(B_1),\quad \text{with}\quad \int_{B_1}\psi(x_-) \d x_-=1\,,
	\end{equation}
	and for $(s,t)$ as above, denote by $\ell _{st}^\epsilon  $ the element of $W^{-3,1}(D)$ defined as
	\[
	\langle \ell^\epsilon  _{st}, \phi \rangle:= \langle (u\otimes v)_{st}^{\epsilon ,\natural},(\phi \otimes \psi )\circ\chi \rangle,
	\quad \text{for}\enskip \phi \in W^{3,\infty}_0(D).
	\]
	By definition of $\ell ^\epsilon $ and the estimate \eqref{uniform_bound_remainder}, 
	we deduce that $\ell ^\epsilon $ is uniformly bounded in $\V^{3\alpha }_{2,\loc}(0,T;W^{-3,1}_\w(D)).$ Proceeding as in the proof of Lemma \ref{lem:stability}, we infer the existence of $\ell \in \V^{3\alpha }_{2,\loc}(0,T;(W^{3,\infty}_0(D))^*)$ and $\epsilon _n\searrow0$ such that for any $\alpha '<\alpha $ and every $\phi \in W^{3,\infty}_0(D)$
	\begin{equation}
	\label{bd:Frechet}
	\langle \ell ^{\epsilon _n},\phi \rangle \to \langle \ell,\phi \rangle \quad \text{in}\enskip \V^{3\alpha '}_{2,\loc}(0,T;\R)
	\end{equation}
	which in particular implies convergence in the $C(\Delta ;\R)$-sense.
	
	It remains to show that $\ell _{st}$ belongs to $W^{-3,1}(D)$ for any $(s,t)\in\Delta .$
	In \eqref{uniform_bound_remainder}, substitute $D$ with any $K\subset D$ and then take the limit as $\epsilon \to0$. This yields
	\begin{multline}
	\label{ineq:l_psi}
	|\ell _{st}|_{(W^{3,\infty}_0(K))^*}\leq C
	\Big[|K|\|u\|_{L^\infty(L^p(K))}\|v\|_{L^\infty(L^{p'}(K))}\omega _B(s,t)^{3\alpha }
	\\
	+\big(\|\partial _iug^i,ug^0 \|_{L^1(s,t;L^1(K))}
	+\|f^i \partial _iv,f^0v\|_{L^1(s,t;L^1(K))}
	\big)\omega _B(s,t)^\alpha \Big].
	\end{multline}
	This implies that $|\ell _{st}|_{(W^{3,\infty}_0(K))^*}$ goes to $0$, as $|K|\to 0.$ 
	As is well-known (see e.g.\ \cite[Proposition 4.4.2 p.~263 \& Proposition 1.3.3 p.~9]{bogachev2007measure})
	this implies that $\ell $ is an element of the subspace $W^{-3,1}(D).$ This proves the claimed property.
	
	\bigskip
	
	\item[\indent\textit{Step 3: passage to the limit in the equation}]
	Fix any $\phi \in W^{3,\infty}(U)$ with compact support in $D,$
	and test \eqref{eq:tensor_eps} against 
	\[
	\Phi(x,y):=\phi (\frac{x+y}{2})\psi  (\frac{x-y}{2}),\enskip (x,y)\in\Omega ^{D} _1 ,
	\]
	which is indeed an element of $W^{3,\infty}(\Omega ^{D}_1 ).$ Observe furthermore that $T_\epsilon \Phi (x,y)=\phi (\frac{x+y}{2})\psi_\epsilon  (x-y)$
	where 
	\[\psi _\epsilon(\cdot )=\psi_\epsilon  (\cdot /2)(2\epsilon)^{-d}\]
	approximates the identity.
	
	Hence, using Theorem \ref{thm:renorm} and dominated convergence, we find that
	\[
	\BRAKET{W^{-1,1}(\Omega ^{D}_1)}{\int_s^t(u\otimes g+f\otimes v)^\epsilon ,\Phi}{W^{1,\infty}_0(\Omega ^{D}_1)}
	\underset{\epsilon \to0}{\longrightarrow} \int_s^t\BRAKET{W^{-1,1}(D)}{u_rg_r+ f_rv_r ,\phi }{W_0^{1,\infty}(D)}\d r\,.
	\]
	
	For the terms involving $\Gamma (\B),$ we first note that by Lemma \ref{lem:multiplication}, the following Leibniz-type formulas are satisfied:
	for every $a,b\in C^\infty$ it holds
	\begin{equation}
	\label{gene:leib:1}
	\left\{
	\begin{aligned}
	&B^1_{st}(ab) = (B^1_{st}a )b + a(B^1_{st}b) - X^0_{st}ab\,,
	\\
	&B^2_{st}(ab) = (B^2_{st}a)b + (B^1_{st}a)(B^1_{st}b) + a(B^2_{st}b) -X^i_{st}X^0_{st}\partial _i(ab) -\big(\LL^0_{st} + \frac32(X^0_{st})^2\big)ab\,.
	\end{aligned}\right.
	\end{equation}
	Now, using dominated convergence and \eqref{gene:leib:1} yields for the first term
	\begin{multline*}
	\BRAKET{W^{-1,1}(\Omega ^{D}_1)}{\Gamma ^{1,\epsilon }_{st}(\B) (u\otimes v)_s^\epsilon ,\Phi }{W^{1,\infty}_0(\Omega ^{D}_1)}
	\\
	\underset{\epsilon \to0}{\longrightarrow }\enskip \BRAKET{W^{-1,1}(D)}{(B^1_{st}u_s)v_s + v_sB^1_{st}u_s,\phi}{W^{1,\infty}_0(D)}
	\\
	=\langle (B^1_{st}+ X^0_{st})(uv),\phi \rangle
	= \langle B_{st}^{(2),1}(uv),\phi \rangle,
	\end{multline*}
	by definition of $\B^{(2),1}.$
	Similarly, using the second equation in \eqref{gene:leib:1}, it is easily seen that
	\begin{multline}
	\label{limit_Q2}
	\BRAKET{W^{-2,1}(\Omega ^{D}_1)}{ \Gamma^{2,\epsilon } _{st}(\B)(u\otimes v)^\epsilon _s ,\Phi }{W^{2,\infty}_0(\Omega ^{D}_1)}
	\\
	\underset{\epsilon \to0}{\longrightarrow }\enskip\BRAKET{}{(B^2_{st}u_s)v_s + (B^1_{st}u_s)(B^1_{st}v_s)+u_s (B^2_{st} v_s),\phi }{}
	\\
	=\left\langle (B^2_{st} + X^0_{st}X^i_{st}\partial _i+\LL^0_{st} + \frac32(X^0_{st})^2)(uv),\phi \right\rangle
	=\left\langle B_{st}^{(2),2}(uv),\phi \right\rangle\,.
	\end{multline}
	Finally, we have $\langle \delta (u\otimes v)^\epsilon _{st},\Phi \rangle\to_{\epsilon \to0} \langle\delta (uv)_{st},\phi \rangle,$
	and hence using the previous step:
	\begin{equation}
	\label{relation_finale}
	\langle\delta (uv)_{st},\phi \rangle=\int_s^t\big\langle ug+fv,\phi\big\rangle\d r 
	+ \left\langle (B^{(2),1}_{st}+B^{(2),2}_{st})(uv),\phi\right\rangle + \big\langle\ell_{st},\phi \big\rangle,
	\end{equation}
	for every $(s,t)\in $ such that $\omega _B(s,t)\leq L.$
	The equation \eqref{relation_finale} holds for any open and bounded $D\subset U$ with positive distance from $U.$ Thus, it remains true for $U$ itself, which shows that $uv$ is an $L^1(U)$-weak solution of \eqref{concl:prod}.

	It remains to show that $\B^{(2)}$ is a differential rough driver, for which it suffices to check that Chen's relations \eqref{chen} hold. But these are an immediate consequence of Lemma \ref{lem:multiplication} and the linearity of $\delta ,$ since:
	\begin{multline*}
	\delta B^{(2),2}_{s\theta t}\equiv \delta \left(B^2 + X^0X^i\partial _i + \LL^0 + \frac32(X^0)^2 \right)_{s\theta t}
	\\
	=B^1_{\theta t}\circ B^1_{s\theta } + \left(X^0_{\theta t} X^i_{s\theta } + X^0_{s\theta }X^i_{\theta t}\right)\partial _i + X^i_{\theta t}\partial _iX^0_{s\theta } + 3X^0_{\theta t}X^0_{s\theta }
	\\
	=(B^1_{\theta t}+ X^0_{\theta t})\circ(B^1_{s\theta }+ X^0_{s\theta })=B^{(2),1}_{\theta t}\circ B^{(2),1}_{s\theta },
	\end{multline*}
	for $(s,\theta ,t)\in\Delta _2.$
	This shows that $\B^{(2)}$ is a differential rough driver. Moreover, $\B^{(2)}$ is obviously geometric since $\B$ is.
	
	Finally, thanks to Proposition \ref{pro:apriori}, we further see that $uv$ is controlled by $B^{(2)},$ and thus it belongs to $\H^{\alpha ,1}_{B^{(2)},\loc}.$
	This achieves the proof of \ref{prod_uv} and the proposition.
\end{proof}

\section{Parabolic equations with free terms: proof of Theorem \ref{thm:free_intro}}
\label{sec:free}

In this section we investigate existence, uniqueness and stability for parabolic rough partial differential equations of the form
\begin{equation}
\label{free}
\begin{aligned}
\d u = (Au + f)\d t +\d\B  u ,\quad \text{on}\enskip [0,T]\times \R^d
\\
u_0\in L^2(\R^d),
\end{aligned}
\end{equation}
where $f$ belongs to the space $L^2(0,T;H^{-1}).$
This completes the case treated in \cite{hocquet2017energy}, where a more general elliptic operator $A$ was considered, but where the assumptions on $\B$ were more restrictive.
For the reader's convenience, we now restate Theorem \ref{thm:free_intro}.

\begin{theorem}
	\label{thm:free}
	Let $f\in L^2(0,T;H^{-1}),$ fix $u_0\in L^2$ and consider a geometric, differential rough driver $\B$ with regularity $\alpha>1/3.$
	There exists a unique $L^2$-energy solution $u=u(u_0,f;\B)$ to \eqref{free}, and it belongs to the space $\H^{\alpha,2}_B(\R^d).$

	Moreover, the solution map is continuous in the following sense
	\begin{enumerate}[label=(C\arabic*)]
		\item\label{cont_1}
		for every $(u_0,f)\in L^2\times L^2(H^{-1})$, the map $\B\mapsto u(u_0,f;\B)$ is continuous in the following sense:
		for any sequence $\{\B(n),n\in\N\}$ of geometric differential rough drivers such that $\rho _\alpha (\B(n),\B)\to 0,$
		denoting by $u(n)$ the solution of \eqref{free} obtained with $\B$ being replaced by $\B(n),$ it holds
		\begin{align*}
		u(n)\to u\quad \text{weakly-$*$}\enskip \text{in}\enskip L^\infty(0,T;L^p(U))\cap L^2(0,T;W^{1,p}(U))\,,
		\intertext{and for any $\alpha '<\alpha$:}
		(\delta u(n),R^{u(n)})\to(\delta u,R^u)\quad \text{in}\enskip \V^{\alpha '}_2(0,T;W^{-1,p}_\w(U))\times \V^{\alpha '}_2(0,T;W^{-2,p}_\w(U))\,.
		\end{align*}
		
		\item\label{cont_2}
		for $\B$ fixed the map $u(\cdot ,\cdot ;\B):L^2\times L^2(H^{-1}) \to \H^{\alpha ,2}_B$ is continuous, with respect to the strong topologies.
	\end{enumerate}
\end{theorem}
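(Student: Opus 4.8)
\medskip\noindent\emph{Proof strategy.} The plan is to derive a geometricity-driven energy estimate for \eqref{free}, to deduce uniqueness from it, to obtain existence by mollifying $\B$ and invoking the weak stability result, and finally to read off the two continuity statements.

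\textbf{The energy estimate.} Let $u$ be an $L^2$-energy solution of \eqref{free}. Writing $f=\partial _i\tilde f^i$ with $\tilde f^i\in L^2(0,T;L^2)$ and applying the product formula (Proposition~\ref{pro:product}) with $u=v$, $f^i=a^{ij}\partial _ju+\tilde f^i$ and $f^0=0$ --- it is precisely geometricity of $\B$ that makes Proposition~\ref{pro:product} available --- one gets that $u^2\in\H^{\alpha,1}_{B^{(2)}}$ is an $L^1$-energy solution of $\d (u^2)=2u(Au+f)\,\d t+\d\B^{(2)}(u^2)$. Testing against $\phi\equiv 1$, integrating by parts in the elliptic term (the boundary contribution vanishes since $u_r\in H^1(\R^d)$) and using $2\langle u_r,Au_r\rangle\le -2\lambda |\nabla u_r|_{L^2}^2$, one is led, for $\omega_B(s,t)$ small, to
\[
\delta\big(|u_\cdot|_{L^2}^2\big)_{st}+2\lambda\int_s^t|\nabla u_r|_{L^2}^2\,\d r\;\le\;\big\langle u_s^2,(B^{(2),1,*}_{st}+B^{(2),2,*}_{st})1\big\rangle+\big\langle (u^2)^\natural_{st},1\big\rangle+\int_s^t 2\langle u_r,f_r\rangle\,\d r .
\]
The crucial point is that, by Lemma~\ref{lem:multiplication}, $B^{(2),1,*}_{st}1$ and $B^{(2),2,*}_{st}1$ are \emph{bounded functions} of respective sizes $O(\omega_B(s,t)^\alpha)$ and $O(\omega_B(s,t)^{2\alpha})$, so that no term competing with the dissipation $-2\lambda\int|\nabla u|^2$ is produced --- this is exactly the place where geometricity enters, in contrast with the It\^o (non-geometric) case described in the introduction. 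Estimating $(u^2)^\natural$ by Proposition~\ref{pro:apriori}, whose drift part obeys $\int_s^t|2u_r(Au_r+f_r)|_{W^{-2,1}}\,\d r\lesssim\int_s^t|\nabla u_r|_{L^2}\big(|u_r|_{L^2}+|\nabla u_r|_{L^2}+|f_r|_{H^{-1}}\big)\,\d r$, and absorbing the arising $\int|\nabla u|^2$ contributions (from the remainder and from $2\langle u,f\rangle$) partly into $\lambda\int|\nabla u|^2$ on the left and partly into a superadditive function $\varphi$ with $\varphi(0,\cdot)\lesssim |u_0|_{L^2}^2+\|f\|_{L^2(H^{-1})}^2$, one arrives at a Rough Gronwall inequality $\delta G_{st}\le(\sup_{s\le r\le t}G_r)\,\omega(s,t)^\alpha+\varphi(s,t)$ for $G_t=|u_t|_{L^2}^2$. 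Lemma~\ref{lem:gronwall} then yields $\sup_t|u_t|_{L^2}^2+\int_0^T|\nabla u_t|_{L^2}^2\,\d t\lesssim|u_0|_{L^2}^2+\|f\|_{L^2(H^{-1})}^2$, and Propositions~\ref{pro:R_g}--\ref{pro:apriori} bound the full $\|u\|_{\H^{\alpha,2}_B}$ by the same quantity. Uniqueness follows at once: if $u^1,u^2$ are $L^2$-energy solutions with the same data, then $w:=u^1-u^2$ is, by linearity, an $L^2$-energy solution with $w_0=0$ and zero free term, so the estimate applied to $w$ has $\varphi\equiv 0$, $G_0=0$, whence $w\equiv 0$.

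\textbf{Existence and continuity.} By geometricity, pick $B(n)\in C^1(0,T;\DD_1)$ with $\rho_\alpha(S_2(B(n)),\B)\to 0$; then $\sup_n\omega_{B(n)}(0,T)<\infty$. For each $n$ the equation with driver $\B(n)=S_2(B(n))$ reduces, by the Sewing Lemma characterisation together with a Taylor expansion exactly as in \cite{hocquet2017energy}, to the classical coercive parabolic problem $\partial _t u=Au+B(n)_tu+f$, which admits a unique weak solution $u(n)\in C(0,T;L^2)\cap L^2(0,T;H^1)\subset\H^{\alpha,2}_{B(n)}$. The energy estimate, being uniform in $n$, gives $\sup_n\|u(n)\|_{\H^{\alpha,2}_{B(n)}}<\infty$; writing $Au(n)+f=\partial _i(a^{ij}\partial _ju(n)+\tilde f^i)$ with the bracket bounded in $L^2(0,T;L^2)$ uniformly in $n$, Lemma~\ref{lem:stability} (case $p=2>1$) provides a subsequence converging weakly in $\H^{\alpha',2}_B$ for any $\alpha'<\alpha$ to some $u$; weak continuity of multiplication by the fixed coefficients $a^{ij}$ identifies its free term as $Au+f$, the energy bound and Proposition~\ref{pro:apriori} promote $u$ to $\H^{\alpha,2}_B$, and weak-$*$ continuity in $L^2$ gives $u_0$ as initial datum, which with uniqueness proves existence and uniqueness. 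For \ref{cont_2}, linearity and the energy estimate give $\|u(u_0^n,f^n;\B)-u(u_0,f;\B)\|_{\H^{\alpha,2}_B}\lesssim|u_0^n-u_0|_{L^2}+\|f^n-f\|_{L^2(H^{-1})}\to 0$. For \ref{cont_1}, every subsequence of $u(u_0,f;\B(n))$ admits, by the uniform bound and Lemma~\ref{lem:stability}, a further subsequence converging weakly in $\H^{\alpha',2}_B$ to a solution of \eqref{free}, which by uniqueness must equal $u(u_0,f;\B)$; hence the whole sequence converges.

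\textbf{Main difficulty.} The delicate step is the energy estimate: one must run the chain rule $u\mapsto u^2$ at the level of $L^2$-energy solutions --- not merely smooth ones --- through Proposition~\ref{pro:product}, and then carefully absorb \emph{every} $\int|\nabla u|^2$ contribution (from the dissipation, from the remainder estimate, and from the free term) into the left-hand side so as to recover a genuine Rough Gronwall inequality. It is here, and only here, that geometricity of $\B$ is indispensable; the remaining steps are a routine combination of the compactness result of Lemma~\ref{lem:stability} with classical parabolic theory for the mollified equations.
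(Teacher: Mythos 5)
Your proof is correct and follows essentially the same route as the paper: an energy estimate for $u^2$ via the product formula (geometricity) and Rough Gronwall, uniqueness as an immediate corollary, existence by mollifying $\B$, deriving uniform bounds and invoking the weak stability lemma, and the two continuity statements by linearity plus uniqueness-of-limit. The only very minor differences are expository (you chose $G_t=|u_t|_{L^2}^2$ rather than including the dissipation in $G$, so the $\int|\nabla u|^2$ bound requires one extra line; and the role of geometricity could be attributed more directly to the vanishing of the bilinear correction $\mathfrak l$, as in Remark~\ref{rem:sto_par}), but neither affects validity.
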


Note that the above result obviously implies Theorem \ref{thm:free_intro}. 
Its proof essentially follows the lines of \cite{hocquet2017energy} but since our assumptions on $\B$ are more general, we provide a complete proof.

\begin{proof}[Proof of Theorem \ref{thm:free}]
	Consider an $L^2$-energy solution $u\in\H^{\alpha,2}_B$ of the equation \eqref{free}.
	Applying Proposition \ref{pro:product} with $u=v,$
	we have that $u^2\in\H^{\alpha,1}_{B^{(2)}}$
	where $\B^{(2)}$ is the shifted differential rough driver defined in \eqref{shifted_rough_driver}.
	Moreover, $u^2$ solves in the $L^1$-sense:
	\begin{equation}
	\label{ito_square_free}
	\d u^2 =2u(Au+f)\d t + \d \B^{(2)}(u^2).
	\end{equation} 
	We want to test against $\phi =1,$ and then apply Rough Gronwall, but for this we need first an estimate on $u^{2,\natural},$
	which itself follows from Proposition \ref{pro:apriori}, together with the estimate on the drift.
	The analysis of the linear part of the drift leads to the estimate:
	\begin{equation}
	\label{bd:reduced_drift}
	\Big| \int_s^t (uAu)\d r\Big|_{W^{-1,1}}
	\leq \lambda^{-1}(\|\nabla u\|^2_{L^2(s,t;L^2)}+\|u\nabla u\|_{L^1(s,t;L^1)})
	\end{equation}
	whereas for the free term, considering anti-derivatives, we find
	\begin{equation}
	\label{bd:free_drift}
	\int_s^t |uf|_{W^{-1,1}}\d r\leq \left(\|u\|_{L^2(s,t;L^2)}+\|\nabla u\|_{L^2(s,t;L^2)}\right)\|f\|_{L^2(s,t;H^{-1})}.
	\end{equation}
	The proof is then divided into 3 steps.

	\bigskip
	
	\item[\indent\emph{Step 1: Energy inequality and application to uniqueness.}]
	Letting $\omega_{\mathscr D} (s,t)$ be the sum of the right hand sides in \eqref{bd:reduced_drift} and \eqref{bd:free_drift},
	one can then apply Proposition \ref{pro:apriori} to obtain
	\begin{equation}
	|u^{2,\natural}_{st}|_{W^{-3,1}}
	\leq C\left(\omega _B(s,t)^\alpha \omega _{\mathscr D}(s,t)  + \|u\|^2_{L^\infty(s,t;L^2)}\omega _B(s,t)^{3\alpha }\right).
	\end{equation}
	for every $(s,t)\in\Delta $ with $\omega _B(s,t)\leq L$ for some absolute constant $L>0.$
	
	Next, consider $f=\partial _i\f^i + \f^0$ where $\f^i\in L^2, i=0,\dots ,d.$
	One can take $\phi =1\in W^{3,\infty}$ in \eqref{ito_square_free}, 
	so that by Assumption \ref{ass:A} it holds for every $s,t$ as above:
	\begin{multline*}
	\delta E_{st}:=
	\delta (|u|_{L^2}^2)_{st} + \int_s^t|\nabla u_r|_{L^2}^2\d r
	\\ 
	\lesssim_{\lambda}
	\iint_{[s,t]\times \R^d}-\partial _iu_r(x)\f^i_r(x)\d x\d r 
	+ \Big\langle (B^{(2),1}_{st}+B^{(2),2}_{st})u^2_s +u^{2,\natural}_{st},1\Big\rangle
	\\
	\lesssim_{\lambda}
	\|\nabla u\|_{L^2(s,t;L^2)}\|\f\|_{L^2(s,t;L^2)}
	+|u_s|^2_{L^2}(\omega _B(s,t)^\alpha+\omega _B(s,t)^{2\alpha })
	+|u^{2,\natural}_{st}|_{W^{-3,1}},
	\\
	\lesssim_{\lambda}
	\|\nabla u\|_{L^2(s,t;L^2)}\|\f\|_{L^2(s,t;L^2)}
	+
	(\omega _B(s,t)^\alpha +\omega _B(s,t)^\alpha +\omega _B(s,t)^{3\alpha })
	\sup_{r\in[s,t]}E_r
	\\
	+\omega _B(s,t)^\alpha \|f\|_{L^2(s,t;H^{-1})}(\|\nabla u\|_{L^2(s,t;L^2)}+\|u\|_{L^2(s,t;L^2)})
	\end{multline*}
	Making use of Young Inequality \[\|\nabla u\|_{L^2(s,t;L^2)}\|\f\|_{L^2(s,t;L^2)}
	\leq \frac{\epsilon }{2}\|\nabla u\|^2_{L^2(s,t;L^2)}+\frac{1}{2\epsilon }\|\f\|_{L^2(s,t;L^2)}^2 \]
	for $\epsilon (\lambda)>0$ sufficiently small, the first term in the right hand side can be absorbed to the left.
	Hence, taking $L$ smaller if necessary, we infer that for any $(s,t)\in\Delta $ with $\omega _B(s,t)\leq L,$
	it holds the incremental inequality
	\[
	\delta E_{st} \leq \omega _B(s,t)^\alpha (\sup\nolimits_{r\in[s,t]}E_r) +\|f\|_{L^2(s,t;H^{-1})}^2.
	\]
	By Lemma \ref{lem:gronwall}, we deduce the estimate
	\begin{multline}
	\label{estimate:f}
	\|u\|^2_{L^\infty(0,T;L^2)}+\|\nabla u\|^2_{L^2(0,T;L^2)}
	\leq C(\lambda)\exp \left\{\frac{\omega _B(0,T)}{\tau _{\alpha ,L}}\right\}\left[|u_0|_{L^2}^2 + \|f\|_{L^2(0,T;H^{-1})}^2\right].
	\end{multline}
	The uniqueness is now straightforward, because the difference $v\equiv u_1-u_2$ of two $L^2$-energy solutions to \eqref{free}
	ought to be itself an $L^2$-energy solution of \eqref{free}, with $f=0$ and $v_0=0$, hence yielding from \eqref{estimate:f} that $v=0.$

	\bigskip
	
	\item[\indent\emph{Step 2: Existence}.]
	Existence and continuity rely mostly on the stability result shown in Lemma \ref{lem:stability}, together with the fact that $\B$ is geometric.
	
	Consider a sequence $\B(n) \to \B$ as in Definition \ref{def:geometric}.
	By standard results on parabolic equations, there exists a unique $u(n) $ in the energy space $L^\infty(L^2)\cap L^2(H^1),$ solving \eqref{free} in the sense of distributions.
	Using moreover the fact that $\B(n)=S_2(B(n) ),$ it is easily deduced from \eqref{free} that $u(n) $ is an $L^2$-energy solution of \eqref{free}, in the sense of Definition \ref{def:var_sol}. Consequently, the previous analysis shows that we have a uniform bound
	\[
	\|u(n) \|_{L^\infty(0,T;L^2)}^2 + \|\nabla u(n) \|^2_{L^2(0,T;H^1)}
	\leq C\left(\lambda,\|f\|_{L^2(0,T;H^{-1})},|u_0|_{L^2},T\right).
	\]
	As a consequence of this bound and Proposition \ref{pro:apriori}, we also obtain the uniform estimate
	\[
	\|u(n) \|_{\H^{\alpha,2}_{B(n)}}\leq C',
	\]
	for another such constant $C'.$
	By Lemma \ref{lem:stability} we see that $\{u(n),n \in\mathbb N\} $ has a (possibly non-unique) limit point $u\in\H^{\alpha,2}_B$ such that the weak-type convergences of \eqref{weak}-\eqref{stab:1} hold, up to some subsequence $u(n_k)$ $n_k\nearrow \infty.$
	In particular, each of the terms in the equation on $u(n_k) $ converges to the expected quantities associated to the limit $u$. This shows the claimed existence.
	
	\bigskip
	
	\item[\indent\emph{Step 3: Stability }.]
	We can now repeat the argument of Step 3 with \emph{any} sequence $\B(n)$ of geometric, differential rough drivers (not necessarily defined as canonical lifts). This will imply the convergence of a subsequence $u(n_k)\rightharpoonup u$, in the sense of \eqref{weak} and \eqref{stab:1}. From the uniqueness part, there can be at most one such limit $u,$ and therefore every subsequence of $u(n) $ converges to $u.$ This implies the convergence of the full sequence, and the claimed continuity \ref{cont_1}.

	To show \ref{cont_2}, note that if $u$ and $v$ are $L^2$-energy solutions of 
	\[
	\begin{aligned}
	&\d u=(Au +f)\d t +\d\B  u,\quad u_0=u^0,\quad 
	\\
	&\d v= (Av+g)\d t +\d\B  v,\quad v_0=v^0 ,
	\end{aligned}
	\]
	where $u^0,v^0 \in L^2,$ and $f,g\in L^2(H^{-1}),$ then $w:=u-v$ solves the problem
	\[
	\d w=(Aw + f-g) \d t +\d\B w,\quad w_0=u^0 -v^0\, .
	\]
	Therefore, the strong continuity of the solution map with respect to $(u_0,f)$
	follows from the estimate \eqref{estimate:f}, together with Proposition \ref{pro:apriori}.
\end{proof}

\section{Local boundedness of solutions}
\label{sec:boundedness}

In this section, we take a step further by investigating the boundedness, away from $t=0$ and on any compact set of the space variable, for solutions
of parabolic RPDEs of previous form, namely
\begin{equation}
\label{eq:bounded}
\begin{aligned}
\d u= (A u+f)\d t + \d\B  u,\quad \text{in}\enskip [0,T]\times \R^d,
\\
u_0\in L^2(\R^d),
\end{aligned}
\end{equation}
where the free term $f$ will be subject to additional conditions, see Assumption \ref{ass:free_bounded},
and $A$ fulfills Assumption \ref{ass:A}. 

First, let us recall a classical interpolation inequality, the proof of which can be found in \cite{ladyzhenskaya1968linear}.
\begin{proposition}
	\label{pro:interp}
	For each $f$ in the space $L^\infty(0,T;L^2)\cap L^2(0,T;W^{1,2}),$ 
	$f$ belongs to $L^\rho (0,T;L^\sigma  )$ for every $\rho, \sigma $ such that
	\begin{equation}\label{conditions}
	\frac1\rho +\frac{d}{2\sigma }\geq \frac d4\quad\text{and}\quad \left\{\begin{aligned}
	&\rho \in[2,\infty]\,, \quad \sigma \in[2,\tfrac{2d}{d-2}]\quad\text{for}\enskip d>2
	\\
	&\rho \in(2,\infty]\,,\quad \sigma \in[2,\infty)\quad \text{for}\enskip d=2
	\\
	&\rho \in[4,\infty]\,,\quad \sigma \in[2,\infty]\quad \text{for}\enskip d=1\,.
	\end{aligned}\right.
	\end{equation}
	In addition, there exists a constant $C_{\rho ,\sigma }>0 $ (not depending on $f$ in the above space) such that
	\begin{equation}
	\label{interpolation_inequality}
	\|f\|_{L^\rho (0,T;L^\sigma )}\leq C_{\rho ,\sigma } \left(\|\nabla f\|_{L^2(0,T;L^2)}+\esssup_{r\in [0,T]}|f_r|_{L^2}\right)\,.
	\end{equation}
\end{proposition}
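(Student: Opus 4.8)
The plan is to reduce this space--time estimate to a pointwise-in-time Gagliardo--Nirenberg--Sobolev inequality followed by H\"older's inequality in the time variable, which is the classical route of \cite{ladyzhenskaya1968linear}. First I would fix $t\in[0,T]$ and recall that for $f_t\in W^{1,2}(\R^d)\cap L^2$ one has
\[
|f_t|_{L^\sigma}\leq C_{d,\sigma}\,|\nabla f_t|_{L^2}^{\theta}\,|f_t|_{L^2}^{1-\theta},
\qquad \theta:=\frac d2\Big(1-\frac2\sigma\Big),
\]
which is valid precisely when $\theta\in[0,1]$, that is for $\sigma$ ranging over the sets listed in \eqref{conditions}: the lower endpoint $\sigma=2$ gives $\theta=0$ and a trivial bound, the upper endpoint $\sigma=\frac{2d}{d-2}$ ($\theta=1$) is the Sobolev embedding $W^{1,2}\hookrightarrow L^{2d/(d-2)}$ when $d>2$, the value $\sigma=\infty$ has to be excluded when $d=2$, and for $d=1$ one may take $\sigma=\infty$ with $\theta=\tfrac12$ (the usual one-dimensional Sobolev inequality).

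Next I would raise this to the power $\rho$ and integrate over $[0,T]$, pulling out the $L^\infty L^2$ factor:
\[
\|f\|_{L^\rho(0,T;L^\sigma)}^\rho
\leq C_{d,\sigma}^\rho\,\Big(\esssup_{r\in[0,T]}|f_r|_{L^2}\Big)^{(1-\theta)\rho}\int_0^T|\nabla f_r|_{L^2}^{\theta\rho}\,\d r.
\]
The crucial numerical observation is that the hypothesis $\tfrac1\rho+\tfrac{d}{2\sigma}\geq\tfrac d4$ is \emph{exactly equivalent} to $\theta\rho\leq2$; hence, by H\"older's inequality in time (with exponents $\tfrac{2}{\theta\rho}$ and its conjugate, or trivially when $\theta\rho=2$),
\[
\int_0^T|\nabla f_r|_{L^2}^{\theta\rho}\,\d r\leq T^{\,1-\theta\rho/2}\,\|\nabla f\|_{L^2(0,T;L^2)}^{\theta\rho},
\]
the power of the (fixed) time horizon $T$ being harmless and absorbed into the constant. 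Combining the two displays and extracting $\rho$-th roots yields $\|f\|_{L^\rho(0,T;L^\sigma)}\leq C\,\|\nabla f\|_{L^2(0,T;L^2)}^{\theta}\big(\esssup_r|f_r|_{L^2}\big)^{1-\theta}$, and the elementary inequality $a^{\theta}b^{1-\theta}\leq a+b$ for $a,b\geq0$, $\theta\in[0,1]$, produces the additive form \eqref{interpolation_inequality}. The degenerate configurations $\theta=0$ (i.e.\ $\sigma=2$) and $\rho=\infty$ (which, by the constraint, forces $\sigma=2$ and makes the bound immediate) are disposed of in the same manner.

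Since this is a textbook inequality, no genuine obstacle arises; the only mild point requiring care is checking that the arithmetic condition \eqref{conditions} corresponds precisely to the two constraints $\theta\in[0,1]$ (needed for the pointwise Gagliardo--Nirenberg step) and $\theta\rho\leq2$ (needed for the H\"older-in-time step), together with the dimension-dependent exclusion of $\sigma=\infty$ when $d=2$. For this reason I would, after laying out the two-step scheme above, simply refer to \cite{ladyzhenskaya1968linear} for the remaining routine verifications.
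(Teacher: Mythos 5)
Your proof is correct and is the standard Gagliardo--Nirenberg-plus-H\"older-in-time argument; the paper itself does not prove this proposition, but simply cites \cite{ladyzhenskaya1968linear} for it, and what you have written is precisely the argument one finds there. Your identification of the arithmetic condition $\frac1\rho+\frac{d}{2\sigma}\geq\frac d4$ with $\theta\rho\leq 2$ is the right bookkeeping, and you correctly isolate the degenerate endpoints ($\sigma=2$ gives $\theta=0$; $\rho=\infty$ forces $\sigma=2$; $\sigma=\infty$ excluded for $d=2$). The only thing worth flagging is that the dimension-dependent lower bounds on $\rho$ appearing in \eqref{conditions} (e.g.\ $\rho\geq 4$ for $d=1$, $\rho>2$ for $d=2$) do not arise from, and are not needed by, your two constraints $\theta\in[0,1]$ and $\theta\rho\leq 2$; they are additional restrictions built into the statement of the proposition, and since your argument proves the estimate on a superset of \eqref{conditions}, no gap results.
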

As an immediate consequence of \eqref{interpolation_inequality}, it can be checked that whenever $r,q\in[1,\infty]$ are numbers satisfying
\begin{equation}
\label{values:r_q}
\frac{1}{r}+\frac{d}{2q}\leq1,
\end{equation}
then it holds the inequality
\begin{equation}
\label{interp_u}
\|u\|_{L^{\frac{2r}{r-1}}(L^{\frac{2q}{q-1}})}\leq C_{r,q} \|u\|_{L^\infty(L^2)\cap L^2(H^1)}.
\end{equation}

\subsection{Moser Iteration}

Recall the basic idea of Moser's iteration.
If $u\in L^\infty(0,T;L^2)\cap L^2(0,T;H^1)$ solves a parabolic equation of the form \eqref{eq:bounded} where the coefficients are smooth enough, the new unknown $|u|^{\varkappa}$ for $\varkappa\geq 2$ is, roughly speaking, solution of a similar equation. 
By a slight modification of the arguments of the Section \ref{sec:free}, it is possible thanks to the above interpolation inequality to find suitable moment bounds for $v:=|u|^{\varkappa /2},$ the value of which depend on similar moments, but for a \textit{lower} exponent.
Thanks to \eqref{interpolation_inequality}, we will then obtain a recursive relation between these quantities, which will take the form of the following inequality
\begin{equation}
\label{hyp:recursive}
\Phi _{n+1} \leq \gamma \tau ^{n}\Phi _n^{1+\epsilon },\quad \text{for any}\enskip n\geq 0\,\,,
\end{equation}
where $\epsilon ,\gamma ,\tau>0$ are constants.
It is worth noting that the above inequality is non-linear, and that the coefficent $\tau ^n$ will blow up unless $\tau$ is smaller than one.
Hence, an upper bound of $\Phi _n$ may blow-up as well when $n\to \infty.$
However, the next result shows that this explosion is ``not too strong'' for our purposes.
The proof is immediate by induction, and therefore omitted.
\begin{lemma}[Recursive estimate]
	\label{lem:recursion}
	Assume that we are given a sequence of non-negative numbers $\Phi _n,n\geq 0,$ and constants $\epsilon ,\gamma ,\tau>0$
	such that \eqref{hyp:recursive} holds.
	Then, the following estimate is true:
	for any $n\geq 0$ we have
	\begin{equation}
	\label{estim:recursive}
	\Phi _n\leq
	\gamma^{\frac{(1+\epsilon )^n-1}{\epsilon }} \tau ^{\frac{(1+\epsilon )^n-1}{\epsilon ^2}-\frac{n}{\epsilon }}\Phi _0^{(1+\epsilon )^n}.
	\end{equation} 
\end{lemma}
Now, a classical result states that
\[
|f|_{L^\varkappa(X,\mathcal M,\mu )}\underset{\varkappa\to\infty}{\to}|f|_{L^\infty(X,\mathcal M,\mu )},
\]
for any $\sigma $-finite measure space $(X,\mathcal M,\mu )$ and every $f\in L^\infty$ such that $f\in L^q$ for some $q\in[1,\infty).$
Using that result and the fact that $\Phi _n$ will be taken below to be an appropriate sequence of moments with diverging exponents,
we will be able to obtain an a priori estimate for the $L^\infty$-norm of $u$. This will prove the boundedness of solutions.

We need now to specify our conditions on $f.$

\begin{assumption}
	\label{ass:free_bounded}
	We assume that
	\[
	f \in \mathscr M:=L^r(0,T;W^{-1,q})\cap L^{2r}(0,T;W^{-1,2q}) \cap L^1(0,T;W^{-1,1})\cap L^2(0,T;H^{-1}),
	\]
	where the exponents $r\in(1,\infty]$ and $q\in(1\vee\frac{d}{2},\infty)$ are subject to the conditions 
	\begin{equation}
	\label{cond:r_q_strict}
	\frac{1}{r} + \frac{d}{2q}<1.
	\end{equation}
\end{assumption}

Using Sobolev embeddings, it is easily checked that Assumption \ref{ass:free_bounded} is fulfilled for $f$ satisfying the assumptions of Theorem \ref{thm:boundedness}, i.e.\ $f\in L^r (0,T;L^q ),$ where $r$ and $q $ verify the condition \eqref{cond:r_q_strict}. Hence, the following result implies Theorem \ref{thm:boundedness}.

\begin{proposition}
	\label{pro:boundedness}
	Let Assumption \ref{ass:free_bounded} hold,
	suppose that $u_0\in L^2$,
	and assume that 
	$u$ is the solution of \eqref{eq:bounded} given by Theorem \ref{thm:free}.
	Then, the essential supremum of $u$ is bounded on each compact subset of $(0,T]\times\R^d.$
	
	In addition, for any $Q\subset\subset (0,T]\times \R^d,$ it holds the estimate
	\[
	\|u\|_{L^\infty(Q)}\leq C(Q,|u_0|_{L^2},\lambda,\|f\|_{\mathscr M},\omega _B,\alpha ,r,q),
	\]
	for a constant depending only on the indicated quantities.
\end{proposition}

\subsection{The recursive estimate}
Our purpose in the present paragraph is to show that a suitable sequence $\{\Phi _n,n\in\N\}$ can be defined, so that Lemma \ref{lem:recursion} will be applicable and provide the claimed $L^\infty_\loc$ estimate.
Consider $u\in\H^{\alpha,2}_B\cap L^\infty,$ $L^2$-energy solution of \eqref{rough_parabolic}, and let $\varkappa \geq 2.$
Assuming for the moment that the conclusions of Theorem \ref{thm:L_p} are true, we have in the $L^1$-sense:
\begin{equation}
\label{ito:real_power}
\delta |u|^\varkappa_{st} = \int_s^t \varkappa u_r|u_r|^{\varkappa-2} (A_r u_r +f_r) dr + (B^1_{st} + B^2_{st})|u_s|^\varkappa + u_{st}^{\varkappa, \natural}\,.
\end{equation}

Defining
\[
v_t(x):= |u_t(x)|^{\varkappa/2},
\]
we have the identities:
\begin{equation}
\label{identities}
v\partial _iv=\frac{\varkappa}{2}(\partial _iu )|u|^{\varkappa-1}
\quad 
\partial _iv\partial _jv=\frac{\varkappa^2}{4}(\partial _iu)(\partial _ju)|u|^{\varkappa-2}.
\end{equation}
Hence denoting by $(\f^i)$ any antiderivative of $f,$
and by $v^{2,\natural}:=|u|^{\varkappa,\natural},$
it holds for every $\phi \in W^{3,\infty}$:
\begin{multline}
\label{eq:v_squared}
\langle \delta (v^2)_{st},\phi \rangle
-\Big\langle (B^1_{st}+B^2_{st})(v^2_s)+ v^{2,\natural}_{st},\phi \Big\rangle
\\
=\iint_{[s,t]\times \R^d}\Big[-4\big(\frac{\varkappa-1}{\varkappa}\big)a^{ij}(\partial _iv)(\partial _jv)\phi -2a^{ij}v(\partial _iv)(\partial _j\phi)  \Big]\d x\d r
\\
+\iint_{[s,t]\times \R^d} \Big[-2(\varkappa-1)\f^i(\partial _iv)v^{1-\frac{2}{\varkappa}}\phi  -\varkappa \f^iv^{2-\frac{2}{\varkappa}}\partial _i\phi \Big]\d x\d r
=:\langle\delta \mathscr D^{(\varkappa)} _{st},\phi \rangle.
\end{multline}

Next, define two cylinders $Q,Q'$ as follows:
let $R,\tau >0,$ and introduce
\[\begin{aligned}
Q':=\{(t,x):\quad 2\tau \leq t\leq T\enskip \text{and}\enskip |x|\leq R/2\}
\\
Q:=\{(t,x):\quad \tau \leq t\leq T\enskip \text{and}\enskip |x|\leq R\}\,.
\end{aligned}
\]
Since $\tau '>0$ and $R>0$ are arbitrary, is is obviously sufficient to show the local $L^\infty$ estimate in $Q'$ instead of any compact set of $(0,T]\times\R^d.$

To this end, let for each $n\geq 0$ 
\[\begin{aligned}
&R_n:=\frac{R}{2}(1+2^{-n}) \underset{n\to \infty}{\searrow} \frac{R}{2}
\\
&\tau _n:= \tau (2-2^{-n})\underset{n\to \infty}{\nearrow}2\tau 
\end{aligned}
\]
and define the cylinders $Q_n$ accordingly.
With this definition, observe that $\tau _0=\tau ,R_0=R$ and that
for each $n\geq 0$
\[
Q'= \cap _{k=0}^\infty Q_k \subset
Q_{n+1}\subset Q_n
\subset Q=Q_0\,.
\]

Now, choose any sequence of smooth test functions $\psi_\cdot (n;\cdot )$ such that
\[\begin{aligned}
\psi _t(n;x)=
\begin{cases}
1\enskip \text{for}\enskip (t,x)\in Q_{n+1}
\\
0\enskip \text{for}\enskip (t,x)\in([0,T]\times \R^n)\setminus Q_n
\end{cases}\,,
\end{aligned}\]
and such that 
\[
\sup_{(t,x)\in[0,T]\times \R^d}\left(|\partial _t\psi_t (n;x)| + \sum\nolimits_{i=0}^3|\nabla ^i\psi _t(n;x)|\right)
\leq C 8^n\,,
\]
where the constant $C>0$ is independent of $n\geq 0$ (it is easy to see that such sequence exists).

Since $\psi (n)$ is smooth in time, thanks to the identity 
$\delta (v^2\psi (n))_{st}=\delta v^2_{st}\psi _s + v^2_t\delta \psi _{st}(n),$
we have for any $s,t\geq 0$ such that $\tau _n\leq s\leq t\leq T$:
\begin{multline}
\label{eq:v_squared_psi}
\delta (\int_{\R^d}v^2\psi (n)\d x)_{st}
+
\iint_{[s,t]\times \R^d}|\nabla v|^2\psi_s(n) \d x\d r
\\
\lesssim_{\lambda }
\iint_{[s,t]\times \R^d} \Big[|v_t|^2|\partial _t\psi (n)| +|v||\nabla v||\nabla \psi_s(n)|+\varkappa|\f||\nabla v||v|^{1-\frac{2}{\varkappa}}\psi_s(n)  +\varkappa |\f||v|^{2-\frac{2}{\varkappa}}|\nabla \psi_s (n)|\Big]\d x\d r
\\
\quad +\left(|\psi _s(n)|_{W^{1,\infty}}\omega _B(s,t)^\alpha +|\psi _s(n)|_{W^{2,\infty}}\omega _B(s,t)^{2\alpha } \right)\int_{\R^d}v_s^2\d x
+|v^{2,\natural}_{st}|_{W^{-3,1}}|\psi_s(n)|_{W^{3,\infty}}\,.
\end{multline}
Making use of the following estimates for $\varkappa\geq 2$:
\[
v^{1-2/\varkappa }\leq 1+v,\quad 
v^{2-2/\varkappa }\leq 1+v^2,
\]
and then letting
\[\rho:=\frac{2r}{r-1}\quad \text{and}\quad \sigma :=\frac{2q}{q-1},
\]
we infer thanks to H\"older Inequality that
\begin{multline}
\label{estim:drift_v}
|\delta \mathscr D^{(\varkappa)}_{st}|_{W^{-1,1}}
\lesssim
\|\nabla v\|_{2,2}^2 +\|v\nabla v\|_{1,1}
\\
+\varkappa\Big(\|\f\|_{2r,2q}\|\nabla v\|_{2,2}\|v\|_{\rho,\sigma }
+\|\f\|_{r,q}\|v\|_{\rho,\sigma }^2
+\|\f\|_{2,2}\|\nabla v\|_{2,2} + \|\f\|_{1,1}
\Big).
\end{multline}
where for notational ease we now use the shorthand notation:
\[
\|\cdot \|_{a,b}:=\|\cdot \|_{L^a(s,t;L^b)}\,.
\]
Going back to \eqref{eq:v_squared_psi} and applying Proposition \ref{pro:apriori} and H\"older Inequality, we obtain the inequality
\begin{multline}
\label{estim:v_B}
E_{Q_{n+1}}:=\sup_{\tau _{n+1}<t<T}\int_{|x|< R_{n+1}}|v_t|^2\d x
+ \iint_{\tau _{n+1}<t<T,\,|x|<R_{n+1}}|\nabla v_t|^2\d x\d t
\\
\leq C(r,q,\lambda)8^n\varkappa^2 \Big(E_{Q_n}
+\|\f\|_{2,2}^2 +\|\f\|_{1,1}
+(\|\f\|_{2r,2q}^2+\|\f\|_{r,q})\|v\mathbf 1_{Q_n}\|_{\rho , \sigma }^2\Big).
\end{multline}
where $\mathbf 1_{Q_n}(x)$ is the indicator function of $Q_n$,
and where the above constant depends on the indicated quantities but not on $\varkappa\geq 2.$

We now want to apply Lemma \ref{lem:recursion}.
To this end, observe first that thanks to \eqref{cond:r_q_strict}, there exists $\epsilon >0$ such that
\begin{equation}
\frac{1}{r} + \frac{d(1+\epsilon q)}{2q}\leq 1\,.
\end{equation} 
For such $\epsilon >0,$ is is easily seen that
\[
\frac{1}{\rho(1+\epsilon )}+ \frac{d}{2(1+\epsilon )\sigma }\geq \frac{d}{4},
\]
which means in particular that the exponents 
\[
\rho (1+\epsilon ),\quad 
\sigma (1+\epsilon )
\]
still satisfy the condition \eqref{conditions}.

Let $n\geq 0.$
In \eqref{estim:v_B}, making the substitution
$\varkappa:= \varkappa_n = 2(1+\epsilon )^n,$
we obtain thanks to Proposition \ref{pro:interp}
\[\begin{aligned}
&\||u|^{(1+\epsilon )^n}\mathbf 1_{Q_{n+1}}\|_{\rho (1+\epsilon ),\sigma(1+\epsilon)}
\\
&\leq C(E_{Q_{n+1}})^{1/2}
\\
&\leq \widetilde C8^{n} (1+\epsilon )^{n}
\Big(1+(E_{Q_n})^{1/2} +\||u|^{(1+\epsilon )^n}\mathbf 1_{Q_n}\|_{\rho , \sigma }\Big)\,.
\end{aligned}\]
from which it follows that
\begin{equation}
\label{interp_rec}
\begin{aligned}
\|u\mathbf 1_{Q_{n+1}}\|^{(1+\epsilon )^{n}}_{\rho (1+\epsilon )^{n+1},\sigma(1+\epsilon)^{n+1}}
\leq \widetilde C8^{n} (1+\epsilon )^{n}
\Big(1+(E_{Q_n})^{1/2} +\|u\mathbf 1_{Q_n}\|^{(1+\epsilon)^n }_{\rho (1+\epsilon )^n,\sigma (1+\epsilon )^n}\Big),
\end{aligned}
\end{equation}
where to obtain the first estimate we have used the interpolation inequality \eqref{interp_u} on $|u|^{(1+\epsilon )^n}.$
Otherwise stated, if one defines the sequence
\[
\Phi _n:=1+ E_{Q_n}^{1/2} + \|u\mathbf 1_{Q_n}\|^{(1+\epsilon )^n}_{\rho (1+\epsilon )^{n+1},\sigma (1+\epsilon )^{n+1}}
\,,\quad n\geq 0,
\]
one sees that for every $n\geq 0$:
\[
\Phi _{n+1}\leq \gamma [8(1+\epsilon )]^n\Phi _n^{1+\epsilon }
\]
for some constant
$\gamma =\gamma \left(\lambda,r,q,\|f\|_{\mathscr M},\omega _B,\alpha\right) >0.$
Applying now \eqref{estim:recursive}, this yields for every $n\in\N:$
\begin{equation}
\Phi _n\leq 
\gamma ^{\frac{(1+\epsilon )^n-1}{\epsilon }}[8(1+\epsilon )]^{\frac{(1+\epsilon )^n-1}{\epsilon ^2}-\frac{n}{\epsilon }}\|u\mathbf 1_Q\|_{\frac{2r}{r-1},\frac{2q}{q-1}}^{(1+\epsilon )^n}\,,
\end{equation}
and it follows that
\begin{equation}
\|u\|_{L^\infty(Q')}\leq \lim_{n\to\infty}(\Phi _n)^{(1+\epsilon )^{-n}}
\leq 
C\|u\|_{L^{\frac{2r}{r-1}}\left(L^{\frac{2q}{q-1}}\right)},
\end{equation}
for another constant $C>0$ as above.
By estimating the right hand side thanks to another application of the interpolation inequality, Proposition \ref{pro:interp}, we obtain the following $L^\infty$ bound
\begin{equation}
\|u\|_{L^\infty(Q')}\leq 
C'\left(\|u\|_{L^\infty(L^2)}+ \|u\|_{L^2(H^1)}\right)\,,
\end{equation}
but using the same Gronwall argument as in Section \ref{sec:free}, this quantity is in turn bounded in terms of $\lambda ,\alpha ,\omega _B,$ $|u_0|_{L^2}$ and $\|f\|_{L^2(H^{-1})}.$

Having this apriori estimate at hand, we can now proceed to the proof of Proposition \ref{pro:boundedness}.

\subsection{Proof of Proposition \ref{pro:boundedness}}
Consider an approximating sequence $\B(n)=S_2(\B(n)) $ as in Definition \ref{def:geometric}.
By the classical PDE theory, if we denote by $u(n) $ the corresponding weak solution (in the sense of distributions) of 
\begin{equation}
\label{classical}
\begin{aligned}
\frac{\partial u(n) }{\partial t} - A u(n) = f + \dot B(n) u(n)
\quad \text{on}\enskip [0,T]\times\R^d\,,
\\
u_0(n)=u^0\,.
\end{aligned}
\end{equation} 
then $u(n) $ is well defined and unique in the class $L^\infty(L^2)\cap L^2(H^1).$
It is easily seen that in fact, $u(n) \in \H^{\alpha ,2}_B$ and is an $L^2$-energy solution of 
\[
\d u(n) =(Au(n) +f)\d t +\d\B (n) u(n) \,.
\]

Moreover, for $f$ as in \eqref{ass:free_bounded}, it is known that $u(n) $ is continuous as a mapping from $[0,T]\times \R^d$ to $\R$ (it is even $\gamma $-H\"older for some $\gamma (\lambda )>0$ \cite{moser1964new}). For such level of regularity, it is shown by classical arguments (see for instance \cite[Chapter 3]{ladyzhenskaya1968linear}) that $v(n) := |u(n)|^{\varkappa/2}$ satisfies the chain rule \eqref{ito:real_power}, where $\B$ is replaced by $\B(n) .$
Consequently, the analysis made in the above paragraph ensures that for any compact set 
\[
Q\subset\subset (0,T]\times \R^d
\]
there is a constant $C_Q>0$ which is independent of $n\geq 0$ such that 
\[
\|u(n) \|_{L^\infty(Q)}\leq C_Q,
\]
Using Banach Alaoglu Theorem, the weak-$*$ lower-semicontinuity of the essential supremum,
and also the uniqueness of the limit $u$ in $L^\infty(L^2)\cap L^2(H^1)$, we see that $u$ satisfies the same estimate.
This proves the proposition.
\hfill\qed

\section{Proof of It\^o Formulas}
\label{sec:proof:ito}

In order to prove Theorem \ref{thm:ito_transport}, we first demonstrate that the It\^{o} Formula holds when $u$ is locally bounded and $F$ is admissible.
The proof of this fact is based on a reiteration of the product formula obtained in Section \ref{sec:space}, allowing to show the claimed property on polynomials of a solution. The fact that polynomials are dense in $C^2$ is then used together with the remainder estimates of Section \ref{sec:space} (it should be noted that this approach is similar to that of \cite[Theorem (3.3)]{revuz1999continuous}).
Approximating our solution by a sequence of such locally bounded elements, we will then show that the latter formula is preserved at the limit, proving the result in the general case.

\subsection{Case when $u$ is locally bounded}

Let $u$ be an $L^2$-energy solution of 
\begin{equation}
\label{generic}
\begin{aligned}
\d u= (A u+f)\d t + \d\B  u
\\
u_0\in L^2,
\end{aligned}
\end{equation}
where $f$ belongs to $L^2(H^{-1}),$ and such that moreover $\|u\|_{L^\infty(Q)}<\infty,$ for any $Q\subset\subset (0,T]\times\R^d.$

Fix a compact set of the form $Q:=[\tau ,T]\times K,$ where $K$ is compact and $\tau >0.$
If $P$ is a polynomial we infer by linearity and Corollary \ref{cor:product} that $P\circ u\in \H^{\alpha ,1}_{B}(Q)$ and that
\[
\d P(u) = P'(u)(Au+f)\d t + \d \B P(u)\,,\quad \text{on}\enskip [\tau ,T]\times K\,,
\]
in the $L^1(K)$-sense.

Since $P$ is admissible, i.e., $P'(0)=P''(0)=0$ and $|P''|_{L\infty}<\infty,$ then the inequalities
\[\begin{aligned}
&|P(z)|\leq |z|^2|P''|_{L^\infty},
\\
&|P'(z)|\leq |z||P''|_{L^\infty},
\quad \forall z\in \R,
\end{aligned}
\]
ensure that $P \circ u$ belongs to $L^\infty(0,T;L^1(\R^d))$ and similarly that $|\nabla u||P'(u)|$ is an element of $L^1(0,T;L^1(\R^d)).$
Hence, a direct evaluation shows that for $P$ as above, it holds
\begin{equation}
\label{pre_BLT:1}
\|P(u)\|_{L^\infty(L^1)\cap L^1(W^{1,1})}\leq C(|P''|_{L^\infty(\R)})\|u\|_{L^\infty(L^2)\cap L^2(H^1)}\,.
\end{equation} 

Similarly, the drift term $\mathscr D:=\int^{\cdot }_0P'(u)(Au+f)\d r$ belongs to $\V_1^1(0,T;W^{-1,1}(\R^d))$ as can be seen by the estimate
\begin{multline}
\label{pre_BLT:2}
|\delta \mathscr D_{st}|_{W^{-1,1}}
\leq 
\int_s^t\big|P'(u)(Au +f)\big|_{W^{-1,1}}\d r = : \omega _{\mathscr D}(s,t)
\\
\leq 
C\left(\lambda ,\|u\|_{L^\infty(L^2)\cap L^2(H^1)},\|f\|_{L^2(H^{-1})},|P''|_{L^\infty}\right)\,.
\end{multline}
Hence, from Proposition \ref{pro:apriori}, we obtain the following estimate in $\H^{\alpha ,1}_B(Q):$
\begin{equation}
\label{pre_BLT:3}
\|P(u)\|_{\H^{\alpha ,1}_B(Q)}\leq C\left(\lambda ,\|u\|_{L^\infty(L^2)\cap L^2(H^1)},\|f\|_{L^2(H^{-1})},|P''|_{L^\infty(\R)}\right)\,.
\end{equation}
Denote by $\mathcal{P}_{\mathrm{adm}}$ the set of admissible polynomials as above, equipped with the norm 
\[|P|_{C^2_{\mathrm{adm}}}:=|P''|_{L^\infty(\R)}.\]
The estimate \eqref{pre_BLT:3} shows that we have constructed a map
\[
\begin{aligned}
\varphi _u \colon\mathcal{P}_{\mathrm{adm}} 
&\longrightarrow \H^{\alpha,1}_{B,\loc}((0,T]\times\R^d)\,,
\\
P\enskip 
&\longmapsto  \varphi_u(P):=P\circ u\,\,,
\end{aligned}
\]
which is \textit{linear and bounded}.
By a classical result of functional analysis, it can therefore be uniquely extended to a mapping 
\begin{equation}
\label{u_star}
u^* \colon C^2_{\mathrm{adm}}\longrightarrow \H^{\alpha,1}_{B,\loc}((0,T]\times\R^d)
\end{equation} 
which satisfies the same estimates as $\varphi _u $, namely \eqref{pre_BLT:3} holds with $F\in C^2_{\mathrm{adm}}$ instead of $P.$ Considering any converging sequence $P_n\to F$ in $C^2_{\mathrm{adm}},$ and then making use of Lemma \ref{lem:stability}, it is easily checked that $(u^*(F))_t(x)=F(u_t(x)),$ for every $t\in[0,T]$ and almost every $x\in\R^d.$ This demonstrates in particular that $F\circ u$ is a well-defined element of $\H^{\alpha ,1}_{B,\loc}((0,T]\times\R^d)$ and that in the $L^1$-sense:
\begin{equation}
\label{F_holds}
\d (F(u)) = F'(u)(Au +f)\d t + \d\B  (F(u))\quad \text{on}\enskip [\tau ,T]\times K.
\end{equation} 
Since by assumption, $u$ belongs to the class $\H_B^{\alpha ,2}([0,T]\times\R^d)$ and $F$ is admissible, neither of the terms in the right hand side of \eqref{pre_BLT:3}, with $P$ replaced by $F$, depend on the choice of $Q\subset\subset \R^d$. It is therefore easy exercise left to the reader that the localization (with respect to both variables) can be removed. Hence \eqref{F_holds} holds in fact on $[0,T]\times\R^d,$ which shows the claimed It\^o formula when $u$ is locally bounded.

We can now turn to the proof of the general case.

\subsection{Proof of Theorem \ref{thm:ito_transport}}

By density one can consider sequences $(f(n))$ and $(u_0(n))$ such that for every $n\in\N,$
$f(n)$ satisfies Assumption \ref{ass:free_bounded}, 
and such that as $n\to \infty:$
\begin{align}
\label{convergence_2}
&f(n)\to f\quad \text{strongly in}\enskip  L^2(H^{-1})\,.
\intertext{By Proposition \ref{pro:boundedness}, the corresponding solution $u(n)\in\H^{\alpha,2}_B$ is locally bounded away from $t=0$,
	and moreover, by the continuity shown in Theorem \ref{thm:free} we have}
\label{convergence_3}
&u(n)\to u\quad \text{strongly in}\enskip L^\infty(L^2)\cap L^2(H^1).
\intertext{Moreover, from \eqref{convergence_3},
	there exists a subsequence (still denoted by $u(n)$ in the sequel) such that}
\label{convergence_4}
& u(n)\to u\quad \text{almost everywhere on}\enskip [0,T]\times \R^d.
\end{align}

By the intermediate result shown in the above paragraph, if $F\in C^2_{\mathrm{adm}},$
we have $F(u(n))\equiv u(n)^*(F)\in \H^{\alpha ,1}_B,$ and moreover, for every $\phi \in W^{3,\infty}:$
\begin{multline}
\label{eq:Fn_final}
\langle \delta F(u(n)),\phi \rangle
- \Big\langle (B^{1}_{st}+B^{2}_{st})\left[F(u_s(n))\right] + F(u(n))^{\natural}_{st},\phi \Big\rangle
\\
= -\iint_{[s,t]\times\R^d}\Big[
a^{ij}F'(u(n))\partial _ju(n)\partial _i\phi 
+a^{ij}F''(u(n))\partial _ju(n)\partial _iu(n)\phi
\\
+\f^i(n)\partial _iu(n)F''(u(n))\phi +\f^i(n)F'(u(n))\partial _i\phi +\f^0(n)F'(u(n))\phi 
\Big]\d x\d r,
\end{multline}
where $(\f^i(n))_{i=0,\dots,d},$ denotes any anti-derivative associated with $f(n).$

As mentioned before, for each $n\in\mathbb{N},$ the operator norm of the extended linear map $u(n)^*,$ which is defined in \eqref{u_star}, is the same as that of $\varphi _u$. As a consequence, the estimate \eqref{pre_BLT:3} remains true if the polynomial $P$ is replaced by $F.$ In particular, there is a constant $C$ such that for any $n\in\mathbb{N}:$
\begin{equation}
\label{bd:H_uniform}
\| F(u(n))\|_{\H^{\alpha ,1}_B}\leq C.
\end{equation}
By Lemma \ref{lem:stability}, the conclusion will follow by \eqref{convergence_4} and identification of the weak limits, provided one can show that
\begin{multline*}
(v(n);g^0(n),g^i(n)):=\Big(F(u(n));a^{ij}\partial _iu(n)\partial _ju(n),a^{ij}\partial _jF'(u(n))\Big)\,,\quad n\in\N\,,
\\
\text{is uniformly integrable.}
\end{multline*}
But using the pointwise estimates $|v(n)|\lesssim u(n)^2,$ $|g^0(n)|\lesssim |\nabla u(n)|^2$ and $|g^i(n)|\lesssim|\nabla u(n)|^2 + |u(n)|^2$, this property is an obvious consequence of the strong convergence \eqref{convergence_3}.
This finishes the proof of Theorem \ref{thm:ito_transport}-(i). The proof of the second item is similar and therefore omitted.
\hfill\qed

\subsection{The $L^p$-norm of $L^p$ solutions: proof of Corollary \ref{cor:L_p_transport}}
For $R>0$ we define an admissible truncation $F_R$ of $|\cdot |^p$ as follows.
Let $\theta \in C^\infty_c,$ supported in $[0,2)$ such that $\theta =1$ on $[0,1]$ while $0\leq \theta \leq 1.$
Define
\[
F_R(z):= \int_0^{|z|} \d y\int_0^y \theta \left(\frac{|\tau |}{R}\right) p(p-1)|\tau |^{p-2}\d\tau ,\quad z\in\R.
\]
Clearly, $|F_R''|_{L^\infty}<\infty,$ and $F_R(0)=F_R'(0)=0$, so $F_R$ is admissible. Moreover, as $R\to\infty,$ $F_R\nearrow |\cdot |^p$ almost everywhere and locally uniformly.

We have by Theorem \ref{thm:ito_transport}:
\begin{multline}
\Big\langle \delta F_R(u)_{st}-(B^{1}_{st}+B^{2}_{st})[F_R(u)] - F_R(u)^\natural_{st},\phi \Big\rangle 
\\
= -\iint_{[s,t]\times\R^d}\Big[
a^{ij}F_R'(u)\partial _ju\partial _i\phi 
+a^{ij}F_R''(u)\partial _ju\partial _iu\phi
\\
+\f^i\partial _iuF_R''(u)\phi +\f^iF_R'(u)\partial _i\phi
\Big]\d x\d r
\\
\lesssim _{\lambda ,p,\theta }|\phi |_{W^{1,\infty}}\left(\iint_{[s,t]\times \R^d}|u|^{p-1}|\nabla u|+|u|^{p-2}|\nabla u|^2 + |\f||\nabla u||u|^{p-2} + |\f||u|^{p-1}\right)
\\
\lesssim_{\lambda ,p,\theta }\|u\|_{L^p(L^p)}^{p-1}\left(\|\nabla u\|_{L^p(L^p)}+\|f\|_{L^p(W^{-1,p})}\right)
\\
+ \|u\|_{L^p(L^p)}^{p-2}\left(\|\nabla u\|_{L^p(L^p)}+\|\nabla u\|^2_{L^p(L^p)}\|f\|_{L^p(W^{-1,p})} \right)\,.
\end{multline}
The above drift term is therefore uniformly bounded in $R>0,$ and so is $\|F_R(u)\|_{\H^{\alpha ,1}_B}$ by Proposition \ref{pro:apriori}.

By Lemma \ref{lem:stability}, this implies that one can take limits as $R\to \infty,$ in the above weak formulation. But this means that \eqref{ito_p_transport} holds, which finishes the proof.\hfill\qed

\subsection{The $L^p$-norm in the general case: proof of Theorem \ref{thm:L_p}}
Uniqueness is easy and therefore we only sketch the proof.
If $u^1$ and $u^2$ are two such solutions, then $v:=u^1-u^2$ is also a solution of the same equation with $0$ instead of $f.$ Using the It\^o formula on $|v|^p,$ and testing against $\phi =1,$ we find thanks to Proposition \ref{pro:apriori} that the $L^\infty(s,t;L^1)$-norm of $v$ satisfies an incremental inequality of the form \eqref{rel:gron} with $\varphi (s,t)=0$. The conclusion then follows by the rough Gronwall argument, Lemma \ref{lem:gronwall}, and the fact that $v_0=0.$

To show existence, we first adapt the compactness argument used in Section \ref{sec:free} for the $L^2$-theory.
\\

\textit{Step 1: compactness argument}
Let us first consider the case when $B=X\cdot \nabla +X^0\in C^\infty(0,T;\DD_1),$
and let $u$ be the unique distributional solution of 
\[\begin{aligned}
\partial _tu-Au=f+ \left(\dot X\cdot \nabla + \dot X^0\right)u\quad \text{on}\enskip (0,T]\times \R^d\,,
\\
u_0:=u^0\in L^p\,.
\end{aligned}
\]
From the classical PDE theory and our definition of the spaces $\H^{\alpha ,p}_{B}$ it is straighforward to check that $u\in \H^{\alpha ,2}_{B}.$ Moreover, it is standard that in the distributional sense
\[\partial _t(|u|^p)= pu|u|^{p-2}(Au + \partial _i\f^i+\f^0) + \dot X\cdot \nabla (|u|^p) + p\dot X^0|u|^p
\]
and, by the consistence of rough integration with Lebesgue/Stieljes integration, it holds in that case
\begin{equation}
\label{ito_approx_Lp}
\d |u|^p-pu|u|^{p-2}(Au+ f)\d t = \d \B^{(p)}|u|^p \,,
\end{equation} 
in the sense of Definition \ref{def:weak_sol} in $L^1,$ and where $\B^{(p)}:=S_2(X\cdot \nabla +pX^0).$
Let $\f^i\in L^1(L^p)\cap L^2(L^2),i=0,\dots ,d$ be any antiderivative of $f$.
Integrating, we have using H\"older Inequality
\begin{equation}
\label{integrated_Lp}
\begin{aligned}
\delta \left(|u|_{L^p}^p\right)_{st} 
&+ \iint_{[s,t]\times\R^d} |u|^{p-2}|\nabla u|^2\d x\d t
\\
&\lesssim_{\lambda ,p}
\delta \left(|u|_{L^p}^p\right)_{st}
+\iint_{[s,t]\times\R^d} p(p-1)a^{ij}|u|^{p-2}\partial _iu\partial _ju \d x\d t
\\
&=
\iint_{[s,t]\times\R^d}\big[pu|u|^{p-2}\f^0 - p(p-1)|u|^{p-2}\partial _iu \f^i \big]\d x\d t
\\
&\quad \quad \quad 
+\int_{\R^d}|u_s|^p(B^{(p),1,*}_{st} + B^{(p),2,*}_{st})1\d x 
+\langle |u_{st}|^{p,\natural},1\rangle
\\
&\lesssim_{\lambda ,p}
\|u\|_{\infty,p}^{p-1}\|\f^0\|_{1,p} + \||u|^{p-2}|\nabla u|^2\|_{1,1}^{1/2}\|u\|_{\infty ,p}^{\frac{p-2}{2}}\|\f^i\|_{2,p}
\\
&\quad \quad \quad \quad 
+\|u\|_{\infty,p}^p\left(\omega _B(s,t)^\alpha +\omega _B(s,t)^{2\alpha }\right)
+\nn{|u|^{p,\natural}}{3}(s,t)
\end{aligned}
\end{equation} 
where we recall the shorthand notation $\|\cdot \|_{a,b}:=\|\cdot \|_{L^a(s,t;L^b)}.$
But thanks to the remainder estimates, Proposition \ref{pro:apriori}, we find for $|t-s|\leq L(\rho _\alpha (\B))$ small enough:
\[\begin{aligned}
\delta &\left(|u|_{L^p}^p\right)_{st} 
+ \iint_{[s,t]\times\R^d} |u|^{p-2}|\nabla u|^2\d x\d t
\\
&\lesssim_{\lambda ,p}
\left(\|u\|_{\infty,p}^{p-1}\|\f^0\|_{p,p}(t-s)^{\frac{p-1}{p}} + \||u|^{p-2}|\nabla u|^2\|_{1,1}^{1/2}\|u\|_{\infty ,p}^{\frac{p-2}{2}}\|\f^i\|_{2,p}\right)(1+\omega _B(s,t)^\alpha )
\\
&\quad \quad 
\omega _B(s,t)^\alpha \left(\|u\|_{\infty,p}^{p-1}\|\f^i\|_{p,p}(t-s)^{\frac{p-1}{p}}
+\||u|^{p-2}|\nabla u|^2\|_{1,1}^{1/2}\|u\|^{p/2}_{\infty,p}(t-s)^{1/2}
\right)
\\
&\quad \quad 
+\|u\|_{\infty,p}^p\left(\omega _B(s,t)^\alpha +\omega _B(s,t)^{2\alpha } + \omega _{B}(s,t)^\alpha \right)
\end{aligned}
\]
Using Young Inequality, taking $L(\rho _\alpha (\B),\lambda )$ smaller if necessary and then absorbing to the left, we end up with the inequality
\[
\delta \left(|u|_{L^p}^p\right)_{st} 
+ \iint_{[s,t]\times\R^d} |u|^{p-2}|\nabla u|^2\d x\d t
\lesssim_{\lambda ,p}
\|u\|_{\infty,p}^p[\omega _B(s,t)^\alpha +(t-s)]+\|\f^0,\f^i\|_{p,p}^p
\]
By the rough Gronwall Lemma, Lemma \ref{lem:gronwall}, we obtain the estimate on 
\begin{equation}
\label{est:u_p_nabla_p}
\|u\|_{L^\infty(L^p)}^p+ \iint_{[0,T]\times\R^d}|u|^{p-2}|\nabla u|^2\,\d x\d t \leq C\left(\lambda ,p,\rho_\alpha (\B),\|f\|_{L^p(W^{-1,p})}\right)\,.
\end{equation}

Now, consider a sequence of canonical lifts $\B(n)=S_2(X(n)\cdot\nabla +X^0(n))$ such that $X(n)$ is smooth in time, $\B(n)\to \B,$
and define the differential rough driver $\B^{(p)}(n)$ correspondingly.
Note that for each $n\geq 0,$ the map $v(n):=|u(n)|^p$ belongs to $\H^{\alpha ,1}_{B^{(p)}(n)},$ since the smoothness of $X(n)$ in time makes trivial the statement about the remainder 
\[R_{st}^{v(n)}=\delta v_{st}(n)-B^{(p),1}_{st}(n)v_s(n)\,,\]
in the definition of the controlled path space $\mathcal D^{\alpha ,1}_{B(n)}$.
Thanks to the convergence of $\B(n),$ it is immediately checked that $\rho _\alpha (\B^{(p)}(n),\B^{(p)})\to 0$ (the $\rho _\alpha $-convergence sense is equivalent to the convergence of the coefficients, see Appendix \ref{app:algebraic}).

%

Moreover, thanks to the identities \eqref{identities} and the remainder estimates (Proposition \ref{pro:apriori}), the estimate \eqref{est:u_p_nabla_p} implies the following uniform estimate on $v(n)=|u(n)|^{p/2}$
\[
\||u(n)|^{p/2}\|_{\H^{\alpha ,2}_{B^{(p/2)}(n)}}\leq C\left(\lambda ,p,\|f\|_{L^p(W^{-1,p})}\right)\,.
\]
Applying Lemma \ref{lem:stability}, one infers the existence of $v\in \H^{\alpha ,2}_{B^{(p/2)}}$ such that $v(n)\to v$ weakly-$*$ in $L^\infty(L^2)\cap L^2(H^1).$
Interpolating the $L^2(H^1)$-estimate with the $\V^\alpha (H^{-1})$ estimate, it is easily seen that the convergence of $v(n)$ holds strongly in $L^2(L^2_\loc)$ and thus, upon taking a subsequence we can assume that
\[
\begin{aligned}
|u(n)|^{p/2}\to |u|^{p/2}\,,\quad \text{in}\enskip L^2(0,T;L^2_\loc)\enskip \text{strong, and}
\\
u(n)\to u\quad \text{almost everywhere in}\enskip [0,T]\times\R^d\,.
\end{aligned}
\]

Using again the remainder estimates, Proposition \ref{pro:apriori}, it follows from the equation on $|u(n)|^p$ that  
\[
\||u(n)|^p\|_{\mathcal D^{\alpha ,1}_{B(n)}}\leq C\,.
\]
Therefore, by the same compactness argument as in the proof of Lemma \ref{lem:stability}, there exists $w$ and $g^i,i=0\dots ,d$ in $(L^\infty)^*$ so that for any $\Phi \in L^\infty([0,T]\times\R^d),$
\begin{align}
\label{convergence_w}
&\iint_{[0,T]\times \R^d} |u(n)|^p\Phi \d x \d t \to \langle w,\Phi \rangle_{(L^\infty)^*,L^\infty}
\\
\label{g_zero}
&p(p-1)\iint_{[0,T]\times \R^d} a^{ij}|u(n)|^{p-2}\partial _iu(n)\partial _ju(n)\Phi \d x \d t \to \langle g^0,\Phi \rangle_{(L^\infty)^*,L^\infty}
\\
&p\iint_{[0,T]\times \R^d} u(n)|u(n)|^{p-2}\partial _ju(n)\Phi \d x\d t \to \langle g^i,\Phi \rangle_{(L^\infty)^*,L^\infty}\,,\quad i=1,\dots,d\,.
\end{align} 
It remains to show that the above limits are the expected ones (thereby proving that the above convergences hold in $L^1$-weak).
\\

\textit{Identification of the limits and conclusion}
Using the strong convergence of $v(n)=|u(n)|^{p/2},$
we also find
\[
\iint_{[0,T]\times \R^d} |u(n)|^p\Phi \d x \d t=\iint_{[0,T]\times \R^d} |u(n)|^{p/2}(|u(n)|^{p/2}\Phi) \d x \d t\to \iint_{[0,T]\times \R^d} |u|^p\Phi \d t\d x\,,
\]
and therefore we see that $w= |u|^p.$
To conclude, it remains to show that
\begin{align}
\label{assertion_L1}
&g^0=a^{ij}|u|^{p-2}\partial _i u \partial _ju\quad
\\
&g^j=u|u|^{p-2}\partial _ju\,.
\end{align}
We content ourselves to show the first assertion since the other one is similar.

In order to prove \eqref{assertion_L1}, observe first that since $p\geq4$, it is also larger than $2$ and thus the sequences $\{u(n),n\in\N\}$ and $\{u^2(n),n\in \N\}$ are also uniformly bounded in the $\H^{\alpha ,2}_{B(n),\loc}$ (respectively $\H^{\alpha ,1}_{B^{(2)}(n)}$)-sense. The Banach Alaoglu Theorem implies the existence of $\mu \in (L^\infty)^*$ so that $a^{ij}\partial _iu(n)\partial _ju(n)\rightharpoonup \mu $ weakly-$*.$ On the other hand $\nabla u(n)\rightharpoonup \nabla u$ in $L^2_{\w}$, and thus applying the local product formula of $u$ with itself, we find that necessarily
\begin{equation}
\label{mu_equal}
\mu =a^{ij}\partial _iu\partial _ju\,.
\end{equation} 
But since $p\geq 4,$ replacing $p$ by $p-2$ in the previous step, we see that there exists $h^0$ in $(L^\infty)^*$ so that \eqref{g_zero} holds with $(p-2,h^0)$ instead of $(p,g^0),$ and it is easily seen that \[p(p-1)u^2h^0=(p-2)(p-3)g^0\,.\]
Applying the product formula, Proposition \ref{pro:product}, to $|u|^{p-2}$ with $u^2$, we see thanks to \eqref{mu_equal} that
$g^0=u^2h^0+ 2\mu |u|^{p-2}=\frac{(p-2)(p-3)}{p(p-1)}g^0 + 2a^{ij}\partial _iu\partial _ju|u|^{p-2},$ which after simplification provides the relation \eqref{assertion_L1}.

Hence the chain rule \eqref{ito_approx_Lp} remains true for $u$ which we recall is the unique solution in the class described by the hypotheses of the theorem. This finishes the proof. \hfill\qed

\section{Proof of Theorem \ref{thm:max_principle}}
\label{sec:max}

We start with the following elementary observation.
For a domain $D\subset \R^d$ with smooth boundary, elements of $W_0^{k,p}(D)$ for $0\leq k\leq 3$ and $p\in[1,\infty]$
are naturally identified in $W^{k,p}(\R^d)$ through the embedding map
\[
\iota_ D:W_0^{3,p}( D)\hookrightarrow W^{3,p}(\R^d),
\]
where for any $\phi $ in $W_0^{3,p}( D),$ we define
\[
\iota _ D\phi (x):=
\begin{cases}
\phi (x)\quad \text{if}\enskip x\in  D
\\
0\quad \text{if}\enskip x\notin  D\,.
\end{cases}
\]
This operation is of course linear and continuous.
In particular, by duality, for every distribution $g\in W^{-3,p'}(\R^d),$
the restriction $g|_{ D}\equiv\iota_ D^*g$ to a smooth domain $ D$ is well defined.

\subsection{Proof of the solvability}

Identify the test functions $W^{k,p}_0( D)$ as elements of $W^{k,p}(\R^d)$ as in the above discussion, and then define
\[
\tilde\sigma :=\iota_D(\sigma) ,\quad 
\mathbf{\tilde B}:=(\tilde B^1,\tilde B^2):= (Z^1{\tilde\sigma} \cdot \nabla ,Z^2 ({\tilde\sigma} \cdot \nabla )^2).
\]
Moreover, let ${\tilde u}_0 :=\iota_D (u_0).$
Concerning the elliptic part, we define
\begin{align*}
&\tilde a^{ij}(t,x):=\begin{cases}
a^{ij}(t,x)\quad \text{if}\enskip (t,x)\in [0,T]\times D\\
\ind_{i=j}\quad \text{otherwise},
\end{cases}
\end{align*}
and we let $\tilde A:=\partial _i(\tilde a^{ij}\partial _j\cdot ).$
With these definitions, $\tilde A,\mathbf{\tilde B},$ fulfill the hypotheses of Theorem \ref{thm:free} so that there exists a unique $L^2$-energy solution $u\in \H^{\alpha,2}_B([0,T]\times\R^d)$ to 
\begin{equation}
\d u=\tilde A u \d t + \d \mathbf{\tilde B}u,\quad \text{on}\quad [0,T]\times \R^d.
\end{equation} 

The restriction $v:= u|_{[0,T]\times D}$ is the natural candidate to solve the Dirichlet problem \eqref{dirichlet}. In order to check that this is indeed the case,
let us remark that $w:=u|_{[0,T]\times(\R^d\setminus D)}$ is a classical solution to
\[
\partial _tw=\Delta w \quad \text{on}\enskip [0,T]\times (\R^d\setminus D),
\quad w_0=0,
\]
and hence $w=0.$ This shows that $u$ is supported in $[0,T]\times D.$ Since on the other hand $u$ belongs to $L^2(H^1(\R^d)),$ this implies that its trace onto $[0,T]\times \partial D$ is well defined, so that $v\in L^2(H^1_0(D)).$ This shows that $v$ solves the Dirichlet problem \eqref{dirichlet}.

\subsection{Proof of the maximum principle}
The proof uses the so-called Stampacchia truncatures approach.
We first assume that
\begin{equation}
\label{hyp:a_add}
a\in L^1(0,T;W_0^{1,\infty}( D)).
\end{equation}
Namely, let us fix a map $G\in C^1(\R)$ such that the following properties are satisfied:
\[\left[\begin{aligned}
&|G'|_{L^\infty(\R)}<\infty,
\\
& G\enskip \text{is increasing on}\enskip (0,\infty),
\\
&G(x)=0\enskip \text{whenever}\enskip x\leq 0.
\end{aligned}\right.
\]
Let $F\in C^2(\R)$ be defined by 
\[
F(x):=\int_0^{x-M}G(y)\d y,\quad x\in\R,
\]
where we denote by
\[
M=\max(0,\esssup\nolimits_{ D}u_0)<\infty.
\]

By Theorem \ref{thm:ito_transport} applied to $F$ (note that $u$ has compact support) the following equation holds:
\[
\langle\delta  F(u)_{st},\phi \rangle = \int_s^t \langle G(u_r-M)Au_r,\phi \rangle\d r
+\langle(B^1+B^2)_{st}F(u_s),\phi \rangle + \langle F^\natural_{st},\phi \rangle,
\]
for some remainder $F^\natural\in \V^{1+}(0,T;W^{-3,1}).$
Next, we arrange the drift term as follows:
\[
\begin{aligned}
\langle G(u-M)Au,\phi \rangle 
+\langle a^{ij}G'(u-M)\partial _iu\partial _ju,\phi  \rangle
&=\langle -a^{ij}G(u-M)\partial _ju,\partial _i\phi  \rangle
\\
&=\langle F(u),\partial _j(a^{ij}\partial _i\phi) \rangle\,.
\end{aligned}
\]
Hence, denoting by $\mathscr D :=\int_0^\cdot G(u_r-M)A_ru_r\d r,$
we have for each $(s,t)\in \Delta :$
\[
|\delta \mathscr D _{st}|_{W^{-2,1}}
\leq \lambda^{-1}\iint_{[s,t]\times D} G'(u-M)|\nabla u|^2\d x\d r
+\|a\|_{L^1(s,t;W^{1,\infty})}\|F(u)\|_{L^\infty(s,t;L^1)}\,.
\]
Therefore, testing the equation against $\phi =1$ and then using Assumption \ref{ass:A} gives
\begin{multline}
\delta (|F(u)|_{L^1})_{st}
+\iint_{[s,t]\times  D} G'(u-M)|\nabla u|^2\d x\d r
\\
\lesssim_{\lambda}
\lambda^{-1}\|F(u)\|_{L^\infty(s,t;L^1)}\omega _B(s,t)^\alpha 
+ \|F(u)\|_{L^\infty(s,t;L^1)}\|a\|_{L^1(s,t;W^{1,\infty})},
\end{multline}
for any $(s,t)$ such that $\omega _B(s,t)\leq L(\lambda).$
Applying Lemma \ref{lem:gronwall}, we obtain that 
\[
\|F(u)\|_{L^\infty(L^1)}\leq C\left(\lambda,\|a\|_{L^1(W^{1,\infty})},\omega _B,\alpha \right)|F(u_0)|_{L^1}\equiv0,
\]
from which we conclude that $u\leq M$ a.e.
The proof of the estimate below is similar, hence omitted. This proves the desired inequality, when \eqref{hyp:a_add} holds.

For general coefficients $a^{ij}$, we consider an approximating sequence $a^{ij}(n),n\in\mathbb N,$ which converges almost everywhere and in $L^1$ to $a^{ij}$, and  such that for each $n,$  Assumption \ref{ass:A} is satisfied (with a uniform $\lambda$) and \eqref{hyp:a_add} holds. By Lemma \ref{lem:stability}, we can assume without loss of generality that the corresponding solution $u(n)$ converges almost everywhere to that associated with $a^{ij}.$ Taking the limit in \eqref{comparison} then proves the result.
This finishes the proof of Theorem \ref{thm:max_principle}.
\hfill\qed

\appendix
\section{Appendix: some technical proofs}
\label{sec:appendix}

\subsection{Proof of Lemma \ref{lem:multiplication}}
\label{app:algebraic}

It is well-known that a multiplication operator $M_f$ of the form $M_fh:=x\mapsto f(x)h(x)$ for $h\in L^2,$ is bounded if and only if $|f|_{L^\infty}<\infty,$ and that the map $f\in L^\infty\mapsto M_f\in \mathscr L(L^2,L^2)$ is an isometry (see for instance \cite{reed1980methods}). By an immediate generalization, for $i=1,2,$ we see that the couple $(j_1,j_2)$ defined as
\begin{equation}
\label{iso_j}
\begin{aligned}
&j_1\colon(W^{3,\infty})^d\times W^{2,\infty} \to \DD_1
&& (X,Y )\mapsto  X^i\partial _i +Y \,,
\\
&j_2\colon(W^{3,\infty})^{d\times d}\times (W^{2,\infty})^d \times W^{1,\infty} \to \DD_2
&&(\XX,\YY,\mathbb Z)\mapsto \XX ^{ij}\partial _{ij} + \YY^i\partial _i + \mathbb Z\,,
\end{aligned}
\end{equation} 
is a continuous isomorphism, where $\DD_i,i=1,2,$ are equipped with the operator-norm topologies as in Definition \ref{def:rough_driver}.

Let $t\mapsto B_t=X_t\cdot \nabla + X^0_t$ be in $C^1(0,T;\DD_1)$ and, as in \eqref{can_lift},
define the canonical lift 
\[(B^1,B^2):=S_2( B)\,.
\]

By definition of $B^2_{st} $,
we have for $0\leq s\leq t \leq T$:
\begin{equation}
\label{def:B2}
\begin{aligned}
B^2_{st}
&:=\int_s^t\d B_{r}\circ \delta B_{sr}
\\
&=\int_s^t(\d X^i_r\partial _i + \d X^0_r)\circ(X^j_{sr}\partial _j + X^0_{sr})
\\
&=\XX^{ij}_{st}\partial _{ij} + (\LL_{st}^i + 2\mathbb S^{0i}_{st})\partial _i + \LL^0_{st}+\mathbb S^{00}_{st}\,,
\end{aligned}
\end{equation} 
where we recall the notation $X_{st}:=X_t-X_s,$ and where we introduce
\begin{equation}
\label{coef_3}
\left[
\begin{aligned}
\XX _{st}^{ij}
&=\int_s^t X ^i_{sr}\d X^j_r,\quad 
\\
\LL _{st}^i
&=\int_s^t\d X ^\mu _r \partial _\mu X_{sr}^i\,,
\\
\mathbb S^{ij}_{st}
&=\sym \XX^{ij}_{st}:=
\frac12\left(\int_s^t X^i_{sr}\d X^j_r + \int_s^tX^j_{sr}\d X^i _r\right),
\quad \text{for all}\enskip 0\leq i,j\leq d\,.
\end{aligned}\right.
\end{equation}
The above integrals are understood in the sense of Bochner, in $W^{3,\infty},$ $W^{2,\infty}, W^{1,\infty}.$
As seen through immediate algebraic computations, the generalized Chen's relations \eqref{gene:chen} hold in this case, since
\begin{equation}
\label{app_chen_LL}
\begin{aligned}
\delta \XX_{s\theta t}
&=\LL _{st}^{i}-\LL_{s\theta }^{i} - \LL_{\theta t}^{i}
\\
&=\left(\int_s^t -\int_s^\theta \right)( \d X_r^\mu \partial _\mu X_{sr}^i)\d r 
-\int_{\theta}^ t(\d X_r^\mu \partial _\mu X_{\theta r}^\mu )\d r 
\\
&= X_{\theta t}^\mu \partial _\mu X_{s\theta }^i\,.
\end{aligned}
\end{equation} 

Next, for almost every $x\in\R^d$, an integration by parts in the time variable yields
the identity
\begin{equation}
\label{weak_geo_XX}
\mathbb S^{ij}_{st}(x)= \frac12X_{st}^i(x) X_{st}^j(x)\,,\quad i=0,\dots d.
\end{equation} 
Denoting by $\mathbb A_{st}^{ij}:=\XX_{st}^{ij}-\mathbb S_{st}^{ij},$ we further observe that Schwarz Theorem implies
\[
\XX_{st}^{ij}\partial _{ij} = \mathbb S_{st}^{ij}\partial _{ij} + \mathbb A_{st}^{ij}\partial _{ij} = \mathbb S_{st}^{ij}\partial _{ij}\,,
\]
since $\mathbb A_{st}$ is antisymmetric.
Hence, only the symmetric part of $\XX$ contributes to the second order part of $B^2_{st}$ in \eqref{def:B2}. This yields the desired expression, namely
\begin{equation}
\label{app_B2}
\begin{aligned}
B^2_{st}=\frac12 X^i_{st}X^j_{st}\partial _{ij} + \left(\LL^i_{st} + X^0_{st}X^i_{st}\right)\partial _i + \LL^0_{st} + \frac12(X^0_{st})^2\,.
\end{aligned}
\end{equation} 

To show \eqref{weak_G_2}, note that
\[\begin{aligned}
B^1_{st}\circ B^1_{st} 
&= ( X^i_{st}\partial _i+  X^0_{st})\circ( X^j_{st}\partial _j +  X^0_{st})
\\
&=X^i_{st} X^j_{st}\partial _{ij} + \left( X^j_{st}\partial _j X_{st}^i + 2 X^0_{st}  X^i_{st}\right)\partial _i + X^j _{st}\partial _jX^0_{st} +(X^0_{st})^2\,.
\end{aligned}
\]
This yields, by definition of $[\B]$:
\begin{equation}
\label{app_bracket}
\begin{aligned}
[\B]_{st}
&\equiv B_{st}^2-\frac12B_{st}^1\circ B_{st}^1
\\
&=\left(\LL^i_{st}- \frac12 X^j_{st}\partial _jX_{st}^i\right)\partial _i + \LL^0_{st} - X^j_{st}\partial _jX^0_{st}
\end{aligned}
\end{equation} 
which is the claimed equality.

Now, pick any geometric differential rough driver $\B$, and let $ \B (n)\in C^1(0,T;\DD_1),n\in \N,$ be such that $\B(n)\equiv S_2( B (n))\to_{\rho _\alpha } \B.$
Making use of the isomorphisms $(j_1,j_2)$ we see that the coefficients 
\[
( X(n),  Y(n);\XX(n),\YY (n),\mathbb Z(n))\equiv(j_1^{-1}B^1(n);j_2^{-1}B^2(n))
\]
converge to $(j_1^{-1}B^1;j_2^{-1}B^2),$ in the space 
\[
\left((W^{3,\infty})^d\times W^{2,\infty}\right)\times \left((W^{3,\infty})^{d\times d}\times (W^{2,\infty})^d\times W^{1,\infty}\right).
\]
In particular, one can take the limits in the identities \eqref{app_chen_LL},\eqref{app_B2},\eqref{app_bracket}, proving the corresponding relations for the limit $\B$.
\hfill\qed

\subsection{Renormalization property for geometric differential rough drivers}
\label{app:renorm}

In what follows, we fix $D\subset U\subset \R^d$ as in Section \ref{sec:space} and, recalling Notation \ref{nota:Omega}, we will further denote by
$\Omega :=\Omega ^{D}$ while $\Omega_\epsilon :=\Omega ^{D}_\epsilon. $

Given $\Phi (\cdot ,\cdot )$, we have for $(x,y)\in\Omega$, by definition of $T_\epsilon $:
\[
T_\epsilon \Phi (x,y):=\frac{1}{(2\epsilon )^d}\Phi \left(x_++\frac{x _-}{\epsilon },x_+-\frac{x _-}{\epsilon }\right),
\]
where we introduce the new coordinates
\begin{equation}
\label{new_coordinates}
x_+:=\frac{x+ y}{2},\quad ,x_-:=\frac{x-y}{2}\,.
\end{equation}
Note that the Jacobian determinant of the map $\chi \colon\Omega \to \R^d\times B_1,$ $(x,y)\mapsto(x_+,x_-)$ is equal to $2^{-d}$ (in fact $\sqrt2 \chi $ is a rotation).
By a common abuse of notation, we will denote by $\nabla _\pm$ the gradient with respect to the new coodinates $x_+(x,y)$ and $x_-(x,y).$ Formally, we have the relation $\nabla _\pm = \nabla_x \pm\nabla _y. $

The proof of Theorem \ref{thm:renorm} is based on the following result, whose proof is implicitly contained in \cite{deya2016priori}, and therefore omitted.

\begin{lemma}
	\label{lem:V}
	Let $V=\sigma ^i(\cdot )\partial _i$ be in $\DD_1.$ For a generic function $\psi :\R^d\to\R,$ denote by $\Psi (x,y):=\psi ((x-y)/2),$ and let $V_x$ (resp.\ $V_y$) be a shorthand for $V\otimes \id,$ (resp.\ $\id\otimes V$).
	For each $k=1,2,3$ and $\psi \in W^{3,\infty}$ with compact support in the unit ball $B_1\subset\R^d,$ it holds uniformly in $\epsilon \in(0,1]$:
	\[
	\left|(\nabla _{\pm})^{k-1}\circ T_\epsilon^{-1}\circ(V_x+V_y)\circ T_\epsilon [\Psi (x,y)]\right|\leq |\sigma |_{W^{k,\infty}}|\psi |_{W^{k,\infty}}\,.
	\]
	for a.e.\ $(x,y)\in \R^d\times\R^d$.
\end{lemma}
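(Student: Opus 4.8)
The plan is to prove Lemma~\ref{lem:V} by a direct computation, reducing the statement to an explicit formula for $T_\epsilon^{-1}\circ(V_x+V_y)\circ T_\epsilon$ applied to a function of the form $\Psi(x,y)=\psi(x_-)$, and then observing that the resulting expression is, up to bounded quantities, a first-order differential operator whose coefficients are difference quotients of $\sigma$. The key point is that the ``transport flavour'' alluded to in the introduction manifests here as a cancellation of the singular $\epsilon^{-1}$ terms when $V_x$ and $V_y$ are added together (this is the ``commutator lemma \`a la DiPerna--Lions'' mentioned around Appendix~\ref{app:renorm}).

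First I would write $V_x+V_y=\sigma^i(x)\partial_{x^i}+\sigma^i(y)\partial_{y^i}$ and change to the coordinates $(x_+,x_-)=\chi(x,y)$ from \eqref{new_coordinates}, using $\nabla_x=\tfrac12(\nabla_++\nabla_-)$ and $\nabla_y=\tfrac12(\nabla_+-\nabla_-)$; this gives
\[
V_x+V_y=\tfrac12\bigl(\sigma^i(x_++x_-)+\sigma^i(x_+-x_-)\bigr)\partial_{x_+^i}
+\tfrac12\bigl(\sigma^i(x_++x_-)-\sigma^i(x_+-x_-)\bigr)\partial_{x_-^i}.
\]
Next I would conjugate by $T_\epsilon$, which in the $(x_+,x_-)$ variables amounts to the dilation $x_-\mapsto x_-/\epsilon$ together with the prefactor $(2\epsilon)^{-d}$; under conjugation $\partial_{x_+}$ is unchanged while $\partial_{x_-}$ picks up a factor $\epsilon^{-1}$, and the arguments $x_+\pm x_-$ of $\sigma$ become $x_+\pm\epsilon x_-$. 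The crucial observation is that the $\partial_{x_-}$-coefficient $\tfrac12(\sigma^i(x_++\epsilon x_-)-\sigma^i(x_+-\epsilon x_-))$ is $O(\epsilon)$ by the mean value theorem, so the dangerous $\epsilon^{-1}$ is absorbed: $T_\epsilon^{-1}\circ(V_x+V_y)\circ T_\epsilon$ equals
\[
\tfrac12\bigl(\sigma^i(x_++\epsilon x_-)+\sigma^i(x_+-\epsilon x_-)\bigr)\partial_{x_+^i}
+\tfrac{1}{2\epsilon}\bigl(\sigma^i(x_++\epsilon x_-)-\sigma^i(x_+-\epsilon x_-)\bigr)\partial_{x_-^i},
\]
and the second coefficient is uniformly bounded by $|\sigma|_{W^{1,\infty}}|x_-|\le|\sigma|_{W^{1,\infty}}$ on $B_1$. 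Applying this operator to $\Psi(x,y)=\psi(x_-)$ (which depends only on $x_-$, so the $\partial_{x_+}$-term annihilates it) leaves exactly the second term acting on $\psi$, giving the bound $|\sigma|_{W^{1,\infty}}|\psi|_{W^{1,\infty}}$ for $k=1$.

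For $k=2,3$ I would then differentiate the above expression $k-1$ times with respect to the (new) coordinates $\nabla_\pm$, using the Leibniz rule: each derivative either falls on a difference quotient of $\sigma$ — raising the Sobolev index needed by one and still producing a bounded quantity since higher-order Taylor remainders of $\sigma$ are controlled by $|\sigma|_{W^{k,\infty}}$ uniformly in $\epsilon\in(0,1]$ — or on $\psi$, contributing at most $|\psi|_{W^{k,\infty}}$. Keeping careful track that $\partial_{x_-}$ differentiation of a factor $\tfrac{1}{\epsilon}(\sigma(x_++\epsilon x_-)-\sigma(x_+-\epsilon x_-))$ produces $(\partial\sigma)(x_+\pm\epsilon x_-)$ with \emph{no} surviving negative power of $\epsilon$, one obtains the claimed estimate. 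The main obstacle is purely bookkeeping: organizing the iterated $\nabla_\pm$-derivatives so that one sees at each stage that the finite-difference structure keeps every coefficient bounded independently of $\epsilon$; once the $k=1$ cancellation is isolated, the higher-$k$ cases follow by the same mechanism applied to successive derivatives of $\sigma$, using that $\psi$ (hence $\Psi$) is compactly supported in $B_1$ so that $|x_-|\le 1$ throughout.
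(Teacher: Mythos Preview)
Your proposal is correct and follows exactly the route implicit in the reference \cite{deya2016priori}; the paper itself omits the proof of Lemma~\ref{lem:V}, citing that reference. The only cosmetic point is that for $k=2,3$ the Leibniz expansion produces several terms, so strictly speaking the bound comes with a harmless combinatorial constant (which the statement of the lemma suppresses).
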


\bigskip

\textit{Proof of the Theorem.}
\begin{trivlist}
	\item[\indent\emph{Step 1: the key estimate}.]
	We first show that for $\Phi \in W^{k,\infty}_0(\Omega ),$ and with $V$ as in Lemma \ref{lem:V}:
	\begin{equation}
	\label{estimates_area}
	|(\nabla _{\pm})^{k-1}(T_\epsilon )^{-1}(V_x + V_y)T_\epsilon \Phi|_{L^\infty(\Omega )}
	\leq C|\sigma |_{W^{k,\infty}}|\Phi |_{W^{k,\infty}_0(\Omega )}\,.
	\end{equation} 
	By density, it will be enough to show \eqref{estimates_area} on functions of the form
	$\Phi (x,y)=\phi (\frac{x+y}{2})\psi (\frac{x-y}{2}),$ with $\psi $ compactly supported in $B_1.$
	For such $\Phi ,$ we have
	\begin{multline*}
	T_\epsilon ^{-1}(V_x + V_y)T_\epsilon \Phi(x,y)
	=T_\epsilon ^{-1}(V_x+V_y)\left[\phi (\frac{x+y}{2})\right]\psi (\frac{x-y}{2\epsilon })
	\\
	+\phi (\frac{x+y}{2})T^{-1}_\epsilon (V_x+V_y)\left[\psi (\frac{x-y}{2\epsilon })\right]
	=I_\epsilon +II_\epsilon  \,.
	\end{multline*}
	Using the new coordinates, we have the following expression for the first term:
	\[
	I_\epsilon= \frac{1}{2}(\sigma (x_++\epsilon x_-)+\sigma (x_+-\epsilon x_-))\cdot \nabla \phi (x_+)\psi (x_-)\,.
	\]
	By the commutation relations
	\begin{equation}
	\label{commutation_T}
	\nabla _+T_\epsilon =T_\epsilon \nabla _+,\quad \text{and}\quad \nabla _- T_\epsilon =\epsilon^{-1} T_\epsilon \nabla _-.
	\end{equation}
	it is then easily seen 
	(see \cite[Proposition 6.1]{hocquet2017energy} for details)
	that for $k=1,2,3:$
	\[
	\esssup_{x_+,x_-}|(\nabla _{\pm})^{k-1}I_\epsilon |\leq |\sigma|_{W^{k,\infty}}|\Phi |_{W^{k,\infty}}
	\leq C\omega _B(s,t)^{i\alpha } |\Phi |_{W^{k,\infty}}
	\]

	For the second term, we can use Lemma \ref{lem:V}, since by assumption $\psi $ is supported on the unit ball of $\R^d.$
	We have
	\begin{equation}
	\esssup_{x_+,x_-}|(\nabla _{\pm})^{k-1}[II_\epsilon ]|
	\leq |\sigma |_{W^{k}}|\Phi |_{W^{k,\infty}}\,.
	\end{equation}
	
	\bigskip
	
	\item[\indent\textit{Step 2: uniform estimates on the first component.}]
	For $V\in \DD_1$ define \[\bbGamma(V):=V\otimes \id+\id\otimes V\,,\] and further let
	\begin{equation}
	\bbGamma ^{\epsilon }(V):=
	T_\epsilon ^{*}\bbGamma(V) (T_\epsilon^*)^{-1}\,.
	\end{equation}
	Particularizing \eqref{estimates_area} with $V=B^1_{st}\in\DD_1$ for fixed $s,t,$ we see by definition of $\Gamma ^{1,\epsilon }_{st}(B)$ that
	\begin{multline*}
	|\Gamma _{st}^{1,\epsilon }(\B)|_{\mathscr L(W^{-k+1,1}(\Omega ),W^{-k,1}(\Omega ))}
	\equiv|\bbGamma ^\epsilon (B^{1}_{st})|_{\mathscr L(W^{-k+1,1}(\Omega ),W^{-k,1}(\Omega ))}
	\\
	\leq 
	|\bbGamma ^{\epsilon }(B^1_{st})^*|_{\mathscr L(W^{k,\infty}_0(\Omega ),W^{k-1,\infty}_0(\Omega ))}
	\\
	\quad 
	\equiv |T_\epsilon ^{-1}(B^{1,*}_{x,st}+B^{1,*}_{y,st})T_\epsilon|_{\mathscr L(W^{k,\infty}_0(\Omega ),W^{k-1,\infty}_0(\Omega ))}
	\leq C\omega _B(s,t)^{\alpha }\,,
	\end{multline*}
	for any $k\in \{1,2,3\}.$
	This yields the first part of the claimed estimate.
	
	Note that, since the bracket $[\B]_{st}$ has order one ($\B$ is geometric), we can let $V=[\B]_{st}$ in the previous computations in order to obtain
	\begin{equation}
	\label{estimates_area_2}
	|\bbGamma ^{\epsilon }([\B]_{st})|_{\mathscr L(W^{-k+1,1}(\Omega ),W^{-k,1}(\Omega ))}
	\leq C\omega _B(s,t)^{2\alpha }\,.
	\end{equation} 
	
	\bigskip
	
	\item[\indent\textit{Step 3: uniform estimates on the second component.}]
	Recalling that $[\B]:=B^2-B^1\circ B^1/2,$ we have by definition of $\Gamma ^2_{st}(B)$:
	\[
	\begin{aligned}
	\Gamma _{st}^{2,\epsilon }(\B)
	&=T_\epsilon ^{-1}\left(B^2_x +B^1_xB^1_y + B^2_y\right)_{st}T_\epsilon 
	\\
	&\equiv T_\epsilon ^{-1}\left(\frac12B^1_xB^1_x+[\B]_x +B^1_xB^1_y + \frac{1}{2}B^1_yB^1_y +[\B]_y\right)_{st}T_\epsilon 
	\\
	&=T_\epsilon ^{-1}\left(\frac12(B^1_x + B^1_y)^2 + [\B]_x+[\B]_y\right)_{st}T_\epsilon \,.
	\end{aligned}
	\]
	Otherwise said, we have the algebraic identity
	\begin{equation}
	\label{expression_S}
	\Gamma ^{2,\epsilon}_{st}(\B) = \frac12\Gamma_{st}^{1,\epsilon}(\B)\circ\Gamma_{st}^{1,\epsilon}(\B) + \mathscr B_{st}^\epsilon \,,
	\quad 
	\text{where}\enskip 
	\mathscr B_{st}^\epsilon 
	:=T_\epsilon ^{-1}([\B]_x+[\B]_y)_{st}T_\epsilon .
	\end{equation} 
	But if $k\in\{-1,0\},$ the estimate \eqref{estimates_area_2} shows that
	\begin{equation}
	\label{uniform_L2}
	|\mathscr B^\epsilon _{st}|_{\mathscr L(W^{k,1},W^{k-1,1})}\leq C\omega _B(s,t)^{2\alpha }\,.
	\end{equation}
	We can now conclude thanks to \eqref{uniform_L2} and Step 2, since for $k=0,-1:$
	\begin{multline*}
	|\Gamma ^{2,\epsilon }_{st}(\B)|_{\mathscr L(W^{k,1},W^{k-2,1})}
	\\
	\leq \frac12|\Gamma ^{1,\epsilon }_{st}(\B)|_{\mathscr L(W^{k,1},W^{k-1,1})}|\Gamma_{st}^{1,\epsilon }(\B)|_{\mathscr L(W^{k-1,1},W^{k-2,1})}
	+|\mathscr B_{st}^\epsilon |_{\mathscr L(W^{k,1},W^{k-2,1})}
	\leq C\omega _B(s,t)^{2\alpha }\,,
	\end{multline*}
	which finishes the proof of Theorem \ref{thm:renorm}.
	\hfill\qed
\end{trivlist}

\section{Further remarks and comments}
\label{app:further}

\subsection{Uniqueness of the Gubinelli derivative}
\label{app:true_roughness}

Let $u$ be such that
\[
\d u = f\d t + \d \B(g,g')\,,
\]
where $f\in L(0,T;W^{-1,p})$ while $(g,g')\in\mathcal D^{\alpha ,p}_B,$
and write 
\[
u\simeq (f;g,g')\,.
\]

It is natural to ask under which condition one can have uniqueness of the triple $(f;g,g')$ such that $u\simeq (f;g,g'),$ a question that relates the Doob-Meyer decomposition for semi-martingales.
Such uniqueness is certainly not true in general because our definition of a differential rough driver could accomodate that of $\dot B:=\dot Z\partial _x,$ where $Z\in C^\infty(0,T;\R).$ Indeed, in this case one can arbitrarily choose $g'=0$ for any $u$ and alternatively represent the element $u\simeq(f;g,0)$
by writing instead $u\simeq(f+\dot Z\partial _xg;0,0).$

In the finite-dimensional case however (for instance replacing $B$ by a path $Z$ of $\frac1\alpha $-finite variation with values in $\R$), the decomposition \eqref{weak_rel} is indeed unique in the case where $Z$ is \emph{truly rough} \cite{friz2012doob}, i.e.\ when there exists a dense set of times $t\in [0,T]$ such that
\begin{equation}
\label{true_roughness}
\limsup_{s\rightarrow t}\frac{|Z_{st}|}{\omega _Z(s,t)^{2\alpha }}=\infty.
\end{equation}

The situation here is different in the sense that assuming $B=Z\sigma \cdot \nabla $
with $Z$ as in \eqref{true_roughness} does not guarantee uniqueness of the couple $(f,g)$ in \eqref{weak_rel}.
To wit, assume that $d=2,$ and let $B$ as above with $\sigma =(0,1).$
If $(f,g)$ satisfy \eqref{weak_rel}, then it is immediately seen that any path of the form $t\mapsto g_t(x,y)+\tilde g_t(x)$ where $\tilde g \in \V_1^\alpha (0,T;L^2(\R))$ is a function of the first variable only, will also satisfy \eqref{weak_rel}.
In this counterexample, one sees that the space variable plays an important role in the discussion, and that if one aims at the uniqueness of the above decomposition, then some ``non-degeneracy'' assumptions on the differential operator $\sigma \cdot \nabla $ are in order. Let us now formulate a natural sufficient condition under which uniqueness of the Gubinelli derivative holds.

Assume that we are given a family $B_t$ of (non-necessarily differential) operators such that the mapping $[0,T]\to \cap_{-2\leq k\leq 0} \mathscr L(H^k,H^{k-1}),$ $t\mapsto B_t$ is $\alpha $-H\"older continuous, where as before $\alpha >1/3.$
For notational simplicity, we denote in the sequel $B_{st}:=\delta B_{st}.$

\begin{theorem}
	Assume the existence of $\gamma \in [\alpha ,\frac32\alpha ),$
	such that the following ellipticity condition is satisfied:
	there is a constant $\Lambda>0,$ such that for every $\varphi$ in $H^{-1},$ and for each $(s,t)\in\Delta \cap D^2 ,$
	\begin{equation}
	\label{non_degeneracy}
	|B_{st}\varphi |_{H^{-2}}
	\geq \Lambda(t-s)^{\gamma }|\varphi |_{H^{-1}}
	\end{equation}
	where we are given some dense subset $D$ of $[0,T].$
	
	Let $u\in L^\infty(0,T;L^2)\cap C^\alpha (0,T;H^{-1})$ and suppose that $g,\tilde g\in C^\alpha (0,T;H^{-1})$ are both Gubinelli derivatives for $u$ in the H\"older sense, by which we mean that
	\[
	\sup_{0\leq s<t\leq T}\frac{|R^g_{st}|_{H^{-2}}}{(t-s)^{2\alpha }}=
	\sup_{0\leq s<t\leq T}\frac{|\delta u_{st}-B_{st}g_s|_{H^{-2}}}{(t-s)^{2\alpha }}<\infty\,\,,
	\]
	and similar for $\tilde g.$
	Then, $g=\tilde g.$
\end{theorem}

\begin{proof}
	Fix $(s,t)\in \Delta \cap D^2.$
	The assumption \eqref{non_degeneracy} implies that the bilinear form
	\[
	a_{st}:H^{-1}\times H^{-1}\to \R,\quad 
	a_{st}(u,v):= (B_{st}u,B_{st}v)_{H^{-2}}
	\]
	is $H^{-1}$-coercive.
	Therefore, if $F:H^{-1}\to \R$ is linear and continuous, the variational problem
	\begin{equation}
	\label{problem_var}
	\left\{\begin{aligned}
	&\text{Find}\enskip u\in V:=H^{-1}\enskip \text{such that}
	\\
	&\forall v\in V\,,\quad a_{st}(u,v)= F(v)\,.
	\end{aligned}\right.
	\end{equation} 
	admits a unique solution
	\[
	u=\mathbf T_{st}F\in H^{-1}\,.
	\]
	Moreover, it is easily seen that the Riesz isomorphism between $H^{-2}$ and its dual identifies the dual of $H^{-1}$ with $H^{-3}$, hence 
	the operator norm of $\mathbf T_{st}:(H^{-1})^*\simeq H^{-3}\to H^{-1}$ is estimated above as
	\[|\mathbf T_{st}|_{\mathscr L(H^{-3},H^{-1})}\leq \Lambda ^{-2}(t-s)^{-2\gamma } \,.\]
	Furthermore,
	if $B^\dagger_{st}$ denotes the adjoint of $B_{st}$ with respect to the $H^{-2}$-inner product,
	observe thanks to \eqref{problem_var} that $\mathbf T_{st}$ is the inverse transform of
	\[B_{st}^\dagger \circ B_{st}:H^{-1}\to H^{-3}.\]

	Let $g$ be a Gubinelli derivative for $u.$
	From the above discussion, one infers the relation
	\[
	g_s= \mathbf T _{st}B_{st}^{\dagger }\delta u_{st}
	- \mathbf T _{st}B_{st}^{\dagger }R^u_{st}=:I+II.
	\]
	By assumption on $R^u_{st}:=\delta u_{st}-B_{st}g_s,$ it holds
	\[
	|II|_{H^{-1}}
	\leq \Lambda^{-2}(t-s)^{-2\gamma }|B^{\dagger}_{st}R^u_{st}|_{H^{-3}}
	\leq \Lambda^{-2}(t-s)^{3\alpha -2\gamma }\|R^u\|_{C^\alpha_2 (H^{-2})}.
	\]
	Hence, letting $t_n\searrow s,$ $t_n\in D,$
	one sees that 
	\[
	|II|_{L^2}\leq C(t_n-s)^{3(\alpha -\frac{2}{3}\gamma) }
	\to 0\quad \text{as}\enskip n\to\infty.
	\]
	This implies that $g_t$ is uniquely determined by the relation
	\[
	g_t= \lim_{s\to t,s\in D}\mathbf T_{st}B^{\dagger}_{st}\delta u_{st}\quad \text{in}\enskip H^{-1},
	\]
	thus proving our claim.
\end{proof}

\begin{example}
	Let $d=1,$ and consider a $1$-dimensional, $\alpha $-H\"older rough path $(Z^1,Z^2)\in\mathscr C^\alpha (0,T;\R)$  such that for some $D$ as above it holds
	\[
	|Z_{st}|\geq c (t-s)^\gamma,\quad \text{for every}\enskip (s,t)\in\Delta \cap D^2,
	\] 
	where we are given some constant $\gamma \in[\alpha ,2\alpha )$ (this implies in particular true roughness for $Z$, in the sense of \eqref{true_roughness}).
	Moreover, let $\sigma \in W^{3,\infty}$ be bounded below, namely such that
	there exist constants $\underline{\sigma} >0$
	with the property that
	$\sigma (x)\geq \underline{\sigma }$,
	for almost every $x\in \R^d.$
	
	Then, it is easily seen that \eqref{non_degeneracy} holds with the differential rough driver $\B$ given by Example \ref{ex:URD_gene} with $\rho =0$, where $\Lambda =\Lambda (c,\underline{\sigma })>0.$
\end{example}

\subsection{Brackets}
For a geometric rough path $(Z^{1,\mu },Z^{2,\mu \nu })_{1\leq \mu ,\nu \leq m}$ it is well-known that the symmetric part of $Z^2$ is expressed in terms of $Z^1,$ as follows
\begin{equation}
\label{geometric_rp}
\sym Z^{2,\mu \nu }_{st}\equiv \frac{Z_{st}^{2,\mu \nu }+Z_{st}^{2,\nu \mu }}{2}=\frac{Z_{st}^{1,\nu} Z_{st}^{1,\mu} }{2}, \quad \text{for all}\enskip 1\leq \mu ,\nu \leq m,
\end{equation}
and every $(s,t)\in\Delta $ (see \cite{lyons1998differential}). Alternatively, this means that the \emph{bracket} $[Z]_{st}:=\sym Z^2_{st}-\frac12(Z^1_{st})^2$ vanishes for geometric rough paths.
By analogy, in the case of an differential rough driver $\B$, we introduced the bracket as the following family of differential operators:
\begin{equation}
\label{bracket_rem}
\L_{st}:=B^2_{st}-\frac12B^1_{st}\circ B^1_{st},\quad (s,t)\in\Delta \,,
\end{equation} 
(see Lemma \ref{lem:multiplication}).
In contrast with what is encountered in the classical theory, note that the bracket does not vanish in general for $\B$ geometric, which is a side effect of the non-commutativity of the algebra of differential operators.
Nevertheless, we saw in Lemma \ref{lem:multiplication} that, as a consequence of geometricity, $\L$ takes values in the space of $\DD_1$. In particular, unless $B^1_{st}\in\DD_0,$ we see that a a cancellation occurs, since in that case $\L_{st}$ has stricly lower order than $B_{st}^2.$ This can be seen as a non-commutative counterpart of the fact that the bracket of geometric rough paths is zero.

\begin{remark}
	\label{rem:ito_enhancement}
	If $\B$ denotes a differential rough driver, then by definition of the bracket $\L$ in \eqref{bracket_rem},
	we have
	\[
	B_{st}^2(\phi \psi ) = (B^2_{st}\phi )\psi + (B^1_{st}\phi) (B^1_{st}\psi ) + \phi (B^2_{st}\psi )
	- \mathfrak{l}_{st}(\phi ,\psi )
	\]
	where $\mathfrak{l}_{st}$ denotes the (generally unbounded) bilinear operator
	\begin{equation}
	\label{nota:l_frak}
	\phi ,\psi \mapsto \mathfrak l_{st}(\phi ,\psi )=
	\L_{st}(\phi \psi) -(\L _{st}\phi) \psi - \phi (\L_{st}\psi )\,.
	\end{equation} 
	
	To give a concrete example, consider a filtered probability space $(\Omega ,\mathcal A,\mathbb{P},\{\mathcal F_t\}_{t\in[0,T]}),$
	let $W:\Omega \times [0,T]\to \R$ be a Brownian motion, and fix $V\in \DD_1\setminus\DD_0.$
	Define the (random) differential rough driver $\B^\ito(\omega )$ by
	$B^{\ito,1}_{st}:=(W_t-W_s)V$ and, observing that $\mathbb{P}$-a.s., $\int_s^t(W_r-W_s)\d W_r=\frac12[(W_t-W_s)^2-(t-s)]$ (It\^o sense), let
	\[B^{\ito,2}_{st}:=\frac12[(W_t-W_s)^2 -(t-s)]V^2.\]
	With this definition, we have
	\[
	[\B^{\ito}]_{st}= -\frac{(t-s)}{2}V^2\,,
	\]
	showing that $\L\in \DD_2\setminus\DD_1,$ almost surely.
\end{remark}

\begin{remark}
	\label{rem:sto_par}
	As seen in the above remark, if $\B$ is not geometric, its bracket $\L$ (see \eqref{bracket_rem}) is generally not first order. In the stochastic context, this has to do with the violation of stochastic parabolicity assumption, as can be seen as follows.
	Using the notations of Remark \eqref{rem:ito_enhancement},
	we see that in the proof of the product formula, the equation \eqref{limit_Q2} must be changed to
	\begin{multline*}
	\lim_{\epsilon \to0}\braket{}{ \Gamma^{2,\epsilon } _{st}(\B)(u\otimes v)_s^\epsilon ,\Phi }{}
	=\braket{}{(B^2_{st}u_s)v_s + (B^1_{st}u_s)( B^1_{st}v_s)+u_s (B^2_{st} v_s),\phi }{}
	\\
	\equiv\langle B^2_{st}(u_sv_s),\phi \rangle
	+\langle \frak l_{st}(u_s,v_s),\phi \rangle\,.
	\end{multline*}
	If we let furthermore $u=v$ where $u$ is an $L^2$-energy solution of \eqref{rough_parabolic}, $\B=\B^{\ito},$ and $\phi =1,$ we have
	\[
	\langle \frak l_{st}(u_s,u_s),1\rangle = (t-s)\int_{U}(Vu_s)^2\d x\,.
	\]
	The latter competes with the term $-2\lambda\iint_{[s,t]\times U}|\nabla u|^2\d x\d r,$ which is brought by the elliptic part of the equation.  In particular, the usual technique to obtain the energy estimate on $u$ fails, unless the coefficients of $V$ are taken small with respect to $\lambda.$
	This illustrates the importance of the geometricity assumption in our results.
\end{remark}

\section*{Acknowledgements}
The authors would like to thank the anonymous referee who significantly helped to improve the quality of this manuscript.
Support by the DFG via Research Unit FOR 2402 is gratefully acknowledged.


%
\end{document}